\theoremstyle{plain}
\newtheorem{theo}{Theorem}[section]
\newtheorem{prop}[theo]{Proposition}
\newtheorem{lemm}[theo]{Lemma}
\newtheorem{coro}[theo]{Corollary}
\newtheorem{defi}[theo]{Definition}
\theoremstyle{definition}
\newtheorem{rema}[theo]{Remark}
\newtheorem{nota}[theo]{Notation}
\DeclareMathOperator{\cnx}{div}
\DeclareMathOperator{\cn}{div}
\DeclareMathOperator{\diff}{d}
\DeclareSymbolFont{pletters}{OT1}{cmr}{m}{sl}
\DeclareMathSymbol{s}{\mathalpha}{pletters}{`s}
\def\BMO{\rm{BMO}}
\def\ME{\widetilde{E}_k}
\def\dt{\diff \! t}
\def\dx{\diff \! x}
\def\dz{\diff \! z}
\def\dy{\diff \! y}
\def\dydx{\diff \! y \diff \! x}
\def\Boundary{\mathcal{B}}
\def\B{B }
\def\defn{\mathrel{:=}}
\def\eps{\varepsilon}
\def\bla{\big\lvert}
\def\bra{\big\lvert}
\def\la{\left\lvert}
\def\lA{\left\lVert}
\def\ra{\right\lvert}
\def\rA{\right\lVert}
\def\le{\leq}
\def\les{\lesssim}
\def\ma{a}
\def\mez{\frac{1}{2}}
\def\ra{\right\rvert}
\def\rA{\right\rVert}
\def\xN{\mathbf{N}}
\def\xR{\mathbf{R}}
\def\xT{\mathbf{T}}
\def\xZ{\mathbf{Z}}
\def\ba{\begin{align}}
\def\bad{\begin{aligned}}
\def\be{\begin{equation}}
\def\defn{\mathrel{:=}}
\def\dsigma{\diff \! \sigma}
\def\dt{\diff \! t}
\def\dx{\diff \! x}
\def\dy{\diff \! y}
\def\dydx{\diff \! y \diff \! x}
\def\e{\eqref}
\def\ea{\end{align}}
\def\ead{\end{aligned}}
\def\ee{\end{equation}}
\def\eps{\varepsilon}
\def\fract{\frac{\diff}{\dt}}
\def\fractt{\frac{\diff^2}{\dt^2}}
\def\la{\left\vert}
\def\lA{\left\Vert}
\def\le{\leq}
\def\les{\lesssim}
\def\mez{\frac{1}{2}}
\def\ra{\right\vert}
\def\rA{\right\Vert}
\def\xN{\mathbb{N}}
\def\xR{\mathbb{R}}
\def\xT{\mathbb{T}}
\def\xZ{\mathbb{Z}}
\numberwithin{equation}{section}
\title{Virial theorems and equipartition of energy for water-waves}
\author{ Thomas Alazard and Claude Zuily}
\date{\empty}
\begin{document}

\clearpage

\maketitle

\begin{abstract}
We study several different aspects of the energy equipartition principle for water waves. We prove a virial identity that 
implies that the potential energy is equal, on average, to a modified version of the kinetic energy. 
This is an exact identity for the complete nonlinear water wave problem, which is valid for arbitrary solutions. 
As an application, we obtain non-perturbative results justifying the formation of bubbles for the 
free-surface Rayleigh-Taylor instability, for any non-zero initial data. We also derive exact virial identities involving higher order energies. The fact that such exact identities are valid for nonlinear equations is new and general: as explained in a companion paper, similar identities can be derived for many other nonlinear equations. 
We illustrate this result by an explicit computation for standing waves. As 
side results, 
we prove trace inequalities for harmonic functions in Lipschitz domains which are optimal with respect to the dependence in the Lipschitz norm of the graph.\end{abstract}

\setcounter{tocdepth}{1}
\tableofcontents

\clearpage

\section{Introduction}

This article is motivated by the study of the principle of equipartition of energy for water waves. In a broad sense, this principle states that energy must be equally distributed between potential and kinetic energy (this principle has its origin in a question which was much debated in statistical mechanics at the beginning of the twentieth century, see~\cite{Rayleigh1900VI,Tolman1918,Pais1979}). 
For many linear wave-type equations, equipartition 
holds in various senses as shown, for instance by 
Brodsky \cite{Brodsky1967}, 
Lax and Phillips \cite{LaxPhillips1967}, 
Goldstein \cite{Goldstein1969}, Duffin \cite{Duffin1970} or Vega 
and Visciglia \cite{VegaVisciglia2008JFA}. Equipartition of energy has 
also been considered for solutions of hyperbolic systems or other dispersives equations, see  Bachelot \cite{Bachelot1987}, 
Dassios and Grillakis \cite{DassiosGrillakis1984}, Glassey and Strauss \cite{Glassey-Strauss-1979}, Strichartz~\cite{Strichartz-equi} or Delort~\cite{Delort-equi}. 
In particular, the principle of equipartition of energy is valid for many equations describing 
water waves in certain asymptotic regimes. 

We intend to study the full nonlinear problem, i.e.\ the incompressible Euler equation with free surface. 
In this case, the question of equipartition of energy 
was first studied by Rayleigh in 1911~\cite{Rayleigh1911xxiii}. He showed that equipartition does not hold in general. 
More precisely, he showed that it is not satisfied by the famous approximate solutions found by Stokes~\cite{Stokes}.  
After this work, this question has been widely discussed by several mathematicians and meteorologists 
such as Platzman \cite{Platzman1947} and Starr \cite{Starr1947}. 
Rayleigh result was later confirmed by Mack and Jay~\cite{MackJay} for approximate standing water waves. 
It follows from these works 
that the difference between the mean kinetic energy and the mean potential energy 
is proportional to the fourth power of the wave amplitude.

The first main result of this paper is that, despite these negative results, there is 
an unexpected exact identity that states that, on average, the potential energy is 
equal to a modified kinetic energy. Our proof is based on Zakharov's observation that the water wave system is 
Hamiltonian in nature. In particular, we will obtain the principal identity alluded to 
above in the form of a virial theorem (see~Appendix~\ref{S:Virial} for comments about virial theorems). 
We will also establish several virial identities which show that 
there are also equipartition of energy principles which govern higher order energies, from which we will 
deduce several surprising coercive estimates. 

It has long been observed by Levine~\cite{Levine,Levine2}, Glassey~\cite{Glassey-blowup} and Sideris~\cite{SiderisCMP} that such virial identities can be applied to study blow-up phenomena. In this direction, we will see that the virial identity can be 
applied to study another problem initiated by Rayleigh~\cite{Rayleigh1879}, justifying in a quantitative way 
the apparition of bubbles or spikes in the Rayleigh-Taylor instability. 

Eventually, we will prove several new inequalities for the trace of harmonic functions. 

\subsection{The equations}

Consider an 
incompressible 
liquid occupying a time dependent fluid domain $\Omega$, 
located underneath a free surface $\Sigma$ given as a graph  and above a fixed flat bottom 
$\Gamma$ such that, at a given time $t\ge 0$,
\begin{align*}
\Omega(t)&= \{\, (x,y)\in \xT^d \times \xR\, : \, -h<y<\eta(t,x)\,\},\\
\Sigma(t)&= \{\, (x,y)\in \xT^d \times \xR\, : \, y=\eta(t,x)\,\},\\
\Gamma&=\{\, (x,y)\in \xT^d \times \xR\, : \, y=-h\,\},
\end{align*}
where $\xT^d$ denotes a $d$-dimensional torus with $d\ge 1$, $x$ (resp.\ $y$) is the horizontal (resp.\ vertical) space variable, 
$h$ is the depth (we allow $h=+\infty$ 
in the infinite depth case), 
the free surface elevation $\eta$ is an unknown function. For our problem, spatial scales may be chosen so that $\xT^d=\xR^d/(2\pi\xZ)^d$ without loss of generality. 

The fluid we will study will be assumed to be 
subjected to the force of gravity. 
In the Eulerian coordinate system, in addition to the the free surface elevation~$\eta$, the 
unknowns are the velocity field 
$u=(u_1,\ldots,u_{d+1})\colon \Omega\rightarrow \xR^{d+1}$ and the scalar pressure $P\colon\Omega\rightarrow \xR$. We assume that they satisfy the 
incompressible Euler equations of fluid mechanics, with the solid wall boundary condition on the bottom:
\begin{equation}\label{Euler}
\left\{
\begin{aligned}
&\partial_t u+(u\cdot\nabla_{x,y}) u+\nabla_{x,y}(P+gy)=0 \quad&&\text{in }\Omega,\\
&\cnx_{x,y}u=0 \quad&&\text{in }\Omega,\\
&u_{d+1}=0 \quad&&\text{on }\Gamma,
\end{aligned}
\right.
\end{equation}
where $\nabla_{x,y}=(\nabla_x,\partial_y)$, $g$ is the acceleration of gravity,   $P\colon\Omega\to\xR$ is the pressure  and 
$u\cdot\nabla_{x,y}=u_1\partial_{x_1}+\cdots+u_{d+1}\partial_y$. Moreover, we assume that the flow is irrotational. Then  $u=\nabla_{x,y}\phi$ for some 
potential function 
$\phi=\phi(t,x,y)$ satisfying
\begin{equation}\label{t5}
\left\{
\begin{aligned}
&\partial_{t} \phi +\mez \la \nabla_{x,y}\phi\ra^2 +P +g y = 0 \quad&&\text{in }\Omega,\\
&\Delta_{x,y}\phi=0\quad&&\text{in }\Omega,\\ 
&\partial_y \phi =0 \quad&&\text{on }\Gamma,
\end{aligned}
\right.
\end{equation}
where $\Delta_{x,y}=\Delta_x^2+\partial_y^2$. 
The first equation is the usual Bernoulli's equation 
(we assumed that the right-hand side of this equation is $0$ but, since 
the fluid domain is bounded, we could allow any function depending only on time).

The water-wave equations are then given by two boundary conditions 
on the free surface. Firstly, we assume that the normal 
velocity of the free surface is equal to the normal 
component of the fluid velocity $u$ on the free surface. 
Denote by $n$ the outward unit normal to $\Sigma$, given by
$$
n=\frac{1}{\sqrt{1+|\nabla_x\eta|^2}} \begin{pmatrix} -\nabla_x \eta \\ 1 \end{pmatrix}.
$$
It follows that
\begin{equation}\label{kinematics}
\frac{\partial \eta}{\partial t} =\sqrt{1+|\nabla_x \eta|^2}\,(U\cdot n) \qquad\text{with}\quad 
U=u\arrowvert_{y=\eta}.
\end{equation}
Hereafter, given a function $f=f(x,y)$ and a function $\theta=\theta(x)$, 
we use $f\arrowvert_{y=\theta}$ as a short notation for the function $x\mapsto f(x,\theta(x))$. 
In terms of the velocity potential, \e{kinematics} simplifies to
\be\label{t8}
\begin{aligned}
\partial_{t} \eta &= \sqrt{1+|\nabla_x\eta|^2}\, \partial_n\phi\arrowvert_{y=\eta},\\
&= \partial_y\phi (t,x,\eta(t,x))-
\nabla_x\eta(t,x)\cdot(\nabla_x\phi)(t,x,\eta(t,x)),
\end{aligned}
\ee
where $\partial_n$ is the normal derivative: $\partial_n=n\cdot\nabla_{x,y}$. 
%The previous equation expresses the condition that the free surface is transported by the fluid. 

The final equation expresses the balance of forces across the free surface. In the case of gravity water waves this reads:
\be\label{t9}
P\arrowvert_{y=\eta}=0.
\ee

\subsection{Hamiltonian formulation}
Consider a solution of the free surface 
Euler equations. 
At a given time~$t$, we define its kinetic energy $E_k(t)$ and its potential energy $E_p(t)$ by
\begin{align}
E_k(t)&=\mez \iint_{\Omega(t)}\la u(t,x,y)\ra^2\dydx =\mez\iint_{\Omega(t)}\la \nabla_{x,y}\phi(t,x,y)\ra^2\dydx,\label{E1}\\
E_p(t)&=\frac{g}{2}\int_{\xT^d}\eta(t,x)^2\dx,\label{E2}
\end{align}
where $\la\cdot\ra$ denotes the Euclidean norm on $\xR^{d+1}$. 
The total energy is then, by definition,
\be\label{E3}
E(t)=E_k(t)+E_p(t).
\ee
It is well known that the total energy is conserved (see \e{conservedEproof}) that is,
\be\label{conservedE}
\fract E(t)=0.
\ee

Zakharov~(\cite{Zakharov1968,Zakharov}) observed that the later identity is in fact associated to an Hamiltonian formulation. To 
explain this, 
following Craig--Sulem~\cite{CrSu}, 
we begin by introducing the Dirichlet-to-Neumann operator to reduce the water-wave equations to a system on the free surface. 
The idea is that, 
since the velocity potential $\phi$ 
is harmonic and satisfies a Neumann boundary condition on $\Gamma$, 
it is fully determined by its evaluation at the free surface. 
We thus work below with
$$
\psi(t,x)\defn\phi(t,x,\eta(t,x)),
$$
and introduce the Dirichlet-to-Neumann operator, denoted by~$G(\eta)$, relating 
$\psi$ to the normal derivative of the potential by 
(see Appendix~\S\ref{Appendix:DN} for details),
$$
G(\eta)\psi =\sqrt{1+|\nabla_x \eta|^2}\partial_n\phi\arrowvert_{y=\eta}.
$$
Then, as explained in Appendix~\S\ref{Appendix:DN}, it follows from \e{t8} and \e{t5}--\e{t9} that
\be\label{systemT}
\left\{
\begin{aligned}
&\partial_t \eta=G(\eta)\psi,\\
&\partial_t \psi+g\eta  +N(\eta,\psi)=0,
\end{aligned}
\right.
\ee
where
\begin{align}
N(\eta,\psi)=\mathcal{N} \big\arrowvert_{y=\eta}\quad\text{with}\quad
\mathcal{N}=
\mez\la\nabla_x\phi\ra^2-\mez (\partial_y\phi)^2+(\partial_y\phi)(\nabla_x\eta \cdot \nabla_x \phi)
.\label{t90}
\end{align}
Introduce the  the energy
$$
E(t) = \mathcal{E}(\eta(t,\cdot), \psi(t, \cdot)),
$$
where
$$
\mathcal{E}(\eta , \psi) =\frac{g}{2}\int_{\xT^d} \eta^2(x)\dx +\mez\int_{\xT^d} (\psi G(\eta)\psi)(x)\dx.
$$
Then~(\cite{Zakharov1968,Zakharov}) 
the system~\e{systemT} has the following hamiltonian structure:
$$
\partial_t\eta=\frac{\delta \mathcal{E}}{\delta \psi}(\eta(t,\cdot), \psi(t, \cdot))\quad ;\quad 
\partial_t\psi=-\frac{\delta \mathcal{E}}{\delta \eta}(\eta(t,\cdot), \psi(t, \cdot)).
$$
Notice that this immediately implies \e{conservedE}.

\begin{defi}[regular solutions]\label{defi:regular}
$i)$ For $s\ge 0$  we denote by $H^s(\xT^d)$ the Sobolev space of those functions $u\in L^2(\xT^d)$ such that
$$
\lA u\rA_{H^s}^2\defn \sum_{\xi\in \xZ^d}(1+|\xi|^2)^s\big\lvert \widehat{u}(\xi)\big\rvert^2<+\infty\quad\text{with}\quad \widehat{u}(\xi)=\frac{1}{(2\pi)^d}\int_{\xT^d}e^{-ix\cdot\xi}u(x)\dx.
$$

$ii)$ Let $h\in (0,+\infty]$. 
We will say that $(\eta,\psi)$ is a regular solution of the water-wave system \eqref{systemT} provided 
that:
\begin{enumerate}
\item 
$(\eta,\psi)\in C^0([0,T], H^{s}(\xT^d)\times H^s(\xT^d))$ for some $T>0$ and some 
$s>2+d/2$,
\item for all $t\in [0,T]$, there holds, 
$$
\int_{\xT^d}\eta(t,x)\dx=0 \quad\text{and}\quad \inf_{x\in\xT^d}\eta(t,x)>-\mez h.
$$
In the infinite depth case $h=+\infty$, the last condition is automatically satisfied.
\end{enumerate}
\end{defi}
\begin{rema}
$i)$ The reason to assume that $s>2+d/2 $ \, is that, under this assumption we have 
$\nabla_{x,y}\phi\in  C^1(\overline{\Omega(t)})$ for all time $t$. 
This elementary result (see Proposition~$2.2$ in \cite{Boundary}) will be sufficient  
to rigorously justify 
all integrations by parts arguments.

$ii)$ We can assume without loss of generality that the mean value of $\eta$ is $0$ since this property is propagated in time (see Corollary~\ref{eta1}).

$iii)$ Such regular solutions are known to exist. Indeed, the local well-posedness for the Cauchy problem with initial data in Sobolev spaces has been extensively studied; see 
\cite{Nalimov,Yosihara,Craig1985,BG,WuInvent,AmMa,LindbladAnnals,CS,LannesJAMS,IguchiCPDE,SZ,ABZ3,Kukavica-3,LannesLivre,BD-2018,dPARMA,AiIT-2019,Ayman2}.
\end{rema}

\subsection{Virial theorem} Our first main result is the following

\begin{theo}[Virial theorem]\label{T:virial} Let 
$ g\in \xR$, and consider a 
regular solution $(\eta,\psi)$ to the water-wave system defined on the time interval $[0,T]$. 

$i)$ Assume that $h<+\infty$. For any time $t\in [0,T)$, there holds
\begin{equation}\label{MI1}
\begin{aligned}
\mez \fract
\int_{\xT^d} \eta(t,x)\psi(t,x)\dx &= \iint_{\Omega(t)} \Big(\frac{3}{4}(\partial_y \phi)^2  + \frac{1}{4} \vert \nabla_x \phi\vert^2\Big)(t,x,y)\dydx\\
&\quad- \frac{g}{2}\int_{\xT^d}\eta(t,x)^2\dx+  \frac{h}{4}\int_{\xT^d} \vert \nabla_x \phi(t, x, -h)\vert^2\dx. 
\end{aligned}
\end{equation}
$ii)$ In the infinite depth case $h=+\infty$, the previous identity simplifies to
\begin{equation}\label{MI2}
\begin{aligned}
\mez \fract
\int_{\xT^d} \eta(t,x)\psi(t,x)\dx   = \iint_{\Omega(t)} \Big(\frac{3}{4}(\partial_y \phi)^2  &+ \frac{1}{4} \vert \nabla_x \phi\vert^2\Big) (t,x,y)\dydx\\
&  - \frac{g}{2}\int_{\xT^d}\eta(t,x)^2\dx .
 \end{aligned}
\end{equation}
\end{theo}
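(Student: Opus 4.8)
The plan is to combine two ingredients: the Hamiltonian equations of motion, which turn the time derivative of the virial quantity into energy-type integrals, and a Rellich--Pohozaev identity for the harmonic potential, which produces the asymmetric weights $\tq,\uq$. First I would differentiate the virial quantity directly on the fixed torus (no transport theorem is needed, since $\int_{\xT^d}\eta\psi\dx$ is an integral over $\xT^d$ and not over the moving domain). Using the system \e{systemT},
\[\mez\fract\int_{\xT^d}\eta\psi\dx=\mez\int_{\xT^d}\psi\,G(\eta)\psi\dx-\frac g2\int_{\xT^d}\eta^2\dx-\mez\int_{\xT^d}\eta\,N(\eta,\psi)\dx.\]
Green's identity together with $\Delta_{x,y}\phi=0$ and $\partial_y\phi=0$ on $\Gamma$ turns the first term into the kinetic energy, $\int_{\xT^d}\psi\,G(\eta)\psi\dx=\iint_{\Omega(t)}|\nabla_{x,y}\phi|^2\dydx$, so the whole problem reduces to recombining $\iint|\nabla_{x,y}\phi|^2$ and $\int\eta N$ into $\tq(\partial_y\phi)^2+\uq|\nabla_x\phi|^2$ plus the bottom term.

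The key step is to apply a Rellich--Pohozaev identity to the harmonic function $\phi$ with the vertical vector field $X=y\,e_{d+1}$ (equivalently $X=(0,y)$). Since $\Delta_{x,y}\phi=0$, one has the pointwise divergence identity
\[\cnx_{x,y}\Big(X\tfrac{|\nabla_{x,y}\phi|^2}{2}-(X\cdot\nabla_{x,y}\phi)\nabla_{x,y}\phi\Big)=\mez\big(|\nabla_x\phi|^2-(\partial_y\phi)^2\big),\]
because $\cnx X=1$ and $DX=e_{d+1}\otimes e_{d+1}$. Integrating over $\Omega(t)$ and using the divergence theorem converts the left-hand side into boundary integrals over $\Sigma(t)$ and $\Gamma$. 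On $\Gamma$, where $y=-h$, the downward normal and $\partial_y\phi=0$ leave only $\tfrac h2\int_{\xT^d}|\nabla_x\phi(t,x,-h)|^2\dx$. On $\Sigma(t)$, writing $n\,\dS=(-\nabla_x\eta,1)\dx$ and $X=(0,\eta)$, a direct expansion shows the integrand collapses exactly to $\eta\,\mathcal N|_{y=\eta}=\eta N(\eta,\psi)$, with $\mathcal N$ as in \e{t90}. This matching is the heart of the computation: the cross term $(\partial_y\phi)(\nabla_x\eta\cdot\nabla_x\phi)$ produced by contracting $X\cdot\nabla_{x,y}\phi$ against $\nabla_{x,y}\phi\cdot n$ is precisely the nonlinear term in $N$. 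Combining the two boundary contributions yields
\[\iint_{\Omega(t)}\big(|\nabla_x\phi|^2-(\partial_y\phi)^2\big)\dydx=2\int_{\xT^d}\eta N\dx+h\int_{\xT^d}|\nabla_x\phi(t,x,-h)|^2\dx,\]
and substituting this back into the expression for $\mez\fract\int\eta\psi$ produces the weights $\tq,\uq$ and the bottom term, giving \e{MI1}.

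For the infinite-depth case (ii), the same vector field and identity are used, but $\Gamma$ is replaced by the behaviour as $y\to-\infty$; the plan is to integrate over the truncated domain $\{y>-R\}$ and let $R\to+\infty$, so that \e{MI1} passes to \e{MI2} provided the flux $\tfrac R2\int_{\xT^d}|\nabla_x\phi(t,x,-R)|^2\dx$ tends to $0$.

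I expect two points to require the most care. First, justifying all the integrations by parts on the Lipschitz-graph domain: this is exactly where the regularity $s>2+d/2$, which gives $\nabla_{x,y}\phi\in C^1(\overline{\Omega(t)})$ (the remark following Definition \ref{defi:regular}), is used, so that Green's and the divergence theorem apply up to the boundary. Second, in the infinite-depth case, controlling the decay of $\nabla_{x,y}\phi$ at large depth to kill the flux at $y=-R$; this amounts to quantitative (exponential) decay estimates for the harmonic extension, and is the genuine analytic obstacle, the algebraic matching of the surface term against $\eta N$ being otherwise exact.
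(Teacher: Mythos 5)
Your proposal is correct and follows essentially the same route as the paper: your Rellich--Pohozaev identity with the vertical field $X=y\,e_{d+1}$ is exactly the paper's multiplier computation $\iint_{\Omega(t)}\Delta_{x,y}\phi\,(y\partial_y\phi)\dydx=0$ written as a single pointwise divergence, your boundary matching of the surface flux against $\eta N$ and the bottom term reproduces \eqref{mez-3mez2}, and the final bookkeeping giving the weights $\tq,\uq$ agrees with \eqref{mez-3mez3}. The one step you assert rather than prove --- the decay $|y|\int_{\xT^d}|\nabla_{x,y}\phi(t,x,y)|^2\dx\to0$ as $y\to-\infty$, needed to kill the flux at the truncation level in case $ii)$ (note the flux also contains an $R(\partial_y\phi)^2$ contribution, so you need decay of the full gradient, not just $\nabla_x\phi$) --- is precisely what the paper establishes in \S\ref{S:2} by Fourier analysis of the harmonic extension below a fixed horizontal level.
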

\begin{rema}
$i)$ We considered the case where the variable $x$ lies in $\xT^d$ in order to be able to apply our results to Stokes waves or standing waves. However, 
the same results hold when $\xT^d$ is replaced by $\xR^d$, with exactly the same proof.

$ii)$ We shall rigorously justify below the fact that, when the depth of the fluid is very large, we may without sensible error suppose $h$ to be infinite. Indeed  we prove in Proposition \ref{fond-inf1} that the term $h\int_{\xT^d} \vert \nabla_x \phi(t, x, -h)\vert^2\dx$ tends to zero when $h$ goes to $+\infty.$

$iii)$ The study of integral identities for water waves has attracted a lot of attention in fluid mechanics (see Longuet-Higgins~\cite{LH1974}, 
Longuet-Higgins and Fenton~\cite{LHF1974}, Benjamin and Olver~\cite{BO}, Clamond~\cite{Clamond2018}). 
In the case $d=1$, in infinite depth ($h=+\infty$) and for progressive wave only, 
similar computations were made by Starr~\cite{Starr1947} and 
Platzamn~\cite{Platzman1947}. 
\end{rema}

We will deduce from the previous identity a result about the equipartition of energy when one averages in time.
\begin{nota}
Given a function $f=f(t)$ and $T>0$ we denote by $\langle f\rangle_T$ the average over the time interval $[0,T]$: 
$$
\langle f\rangle_T = \frac{1}{T}\int_0^T f(t)\dt.
$$
\end{nota}
Recall that we denote by $E_k$, $E_p$ and $E$ 
the kinetic, potential and total energies given by \e{E1}--\e{E3}. 
Let us also introduce 
the following modified kinetic energy
\be\label{defi:ME}
\ME(t)=\iint_{\Omega(t)} \left(\frac{3}{4}(\partial_y \phi)^2  + \frac{1}{4} \vert \nabla_x \phi\vert^2\right)(t,x,y)\dydx,
\ee
as well as the boundary energy at the bottom:
\be\label{defi:Boundary}
\Boundary(t)=\frac{h}{4}\int_{\xT^d} \vert \nabla_x \phi(t, x, -h)\vert^2\dx.
\ee
\begin{coro}[Equipartition of energy] \label{C:virial}
\begin{enumerate}[i)]
\item\label{C:viriali} {\em Progressive wave :}  
assume that $\eta$ and $\psi$ are of the form
$$
\eta(t,x)=\underline{\eta}(x-ct),\quad \psi(t,x)=\underline{\psi}(x-ct)+f(t)\quad\text{with }
\underline{\eta},\underline{\psi}\in H^s(\xT^d) \text{ and }c\in\xR^d.
$$
Then the energies $\ME$, $E_p$ and $\Boundary$ 
are time independent and moreover
$$
\ME-E_p+\Boundary =0.
$$

\item\label{C:virialii} {\em Periodic solutions : } if $t\mapsto \eta(t,x)$ and $t\mapsto \nabla_{x,y}\phi(t,x,y)$ are $T$-periodic 
then
$$
\langle \ME+\Boundary\rangle_T=\langle E_p\rangle_T.
$$

\item\label{C:virialiii} {\em Arbitrary solutions :}
there exists a constant $C>0$ such that, for any regular solution defined on the time interval $[0,T]$,
\be\label{virialestimate}
\la \langle \ME + \Boundary- E_p\rangle_T \ra\le \frac{CQ}{T}E\quad\text{with}\quad
Q\defn 1+\lA \nabla_x \eta\rA_{L^\infty([0,T]\times\xT^d)},
\ee
where recall that $E=E_k+E_p$ denotes the total energy (which is constant).

\item {\em Infinite depth :} if $h=+\infty$   all the above identities hold with $\Boundary$ replaced by $0$.
\end{enumerate}
\end{coro}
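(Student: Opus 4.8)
The plan is to deduce all four parts of Corollary~\ref{C:virial} from the Virial Theorem~\ref{T:virial} by exploiting the structure of the right-hand side. The central object is the identity
\be
\mez \fract \int_{\xT^d} \eta\psi \dx = \ME(t) - E_p(t) + \Boundary(t),
\ee
so every assertion amounts to controlling the time derivative of the quantity $V(t) \defn \int_{\xT^d}\eta(t,x)\psi(t,x)\dx$.

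First I would treat part~\ref{C:viriali}. For a progressive wave, $\eta$, $\nabla_x\psi$ and $\nabla_{x,y}\phi$ are all functions of $x-ct$, so the fluid domain $\Omega(t)$ is a rigid horizontal translate of a fixed domain; consequently each of the spatially-integrated energies $\ME$, $E_p$ and $\Boundary$ is obtained by integrating a translate of a fixed integrand over a translate of a fixed set, hence is time independent. The only subtlety is the additive term $f(t)$ in $\psi$: I would note that $V(t)=\int \underline\eta(x-ct)\big(\underline\psi(x-ct)+f(t)\big)\dx$, and since $\int_{\xT^d}\underline\eta=0$ (the mean-zero condition from Definition~\ref{defi:regular}), the $f(t)$ contribution vanishes and $V(t)=\int\underline\eta\,\underline\psi\,\dx$ is constant. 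Therefore $\frac{\diff}{\dt}V\equiv 0$, and the identity gives $\ME-E_p+\Boundary=0$ pointwise in $t$.

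Next, parts~\ref{C:virialii} and~\ref{C:virialiii} both follow by time-averaging the identity over $[0,T]$, since
\be
\langle \ME+\Boundary-E_p\rangle_T = \frac{1}{2T}\big(V(T)-V(0)\big).
\ee
For periodic solutions, $\eta$ and $\nabla_{x,y}\phi$ being $T$-periodic forces $\psi$ to be $T$-periodic up to an additive constant (recovered by integrating the second equation of~\e{systemT}); using again $\int\eta=0$ to kill that constant, $V$ is itself $T$-periodic, so $V(T)=V(0)$ and the average of $\ME+\Boundary-E_p$ vanishes. For arbitrary solutions, the same telescoping bounds the average by $\frac{1}{2T}\big(|V(T)|+|V(0)|\big)$, so the whole matter reduces to the pointwise estimate $|V(t)|\le C\,Q\,E$. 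I expect this to be the main obstacle, since it is the one genuinely quantitative step. I would estimate $V(t)=\int\eta\psi\dx$ by writing $\int\eta\psi\dx=\int\eta\,G(\eta)^{-1}G(\eta)\psi\,\dx$ and using $G(\eta)\psi=\partial_t\eta$ together with the coercivity and mapping properties of the Dirichlet--Neumann operator; concretely $\mez\int\psi G(\eta)\psi\,\dx=E_k\le E$ controls $\psi$ in the relevant norm, while $\int\eta^2\dx = 2E_p/g\le CE$ controls $\eta$, and the constant $Q=1+\lA\nabla_x\eta\rA_{L^\infty}$ enters precisely through the norm equivalences for $G(\eta)$ on a domain whose boundary has Lipschitz constant $\lA\nabla_x\eta\rA_{L^\infty}$. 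A clean route is Cauchy--Schwarz combined with a trace/Poincar\'e inequality, $|V|\le \lA\eta\rA_{L^2}\lA\psi\rA_{L^2}\le C Q \lA\eta\rA_{L^2}(\psi G(\eta)\psi)^{1/2}\le CQ\,E$, the factor $Q$ arising from the $\eta$-dependent trace inequalities announced in the abstract.

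Finally, the infinite-depth case is immediate: by~\ref{T:virial}~$ii)$ the identity~\e{MI2} is exactly~\e{MI1} with $\Boundary\equiv 0$, so repeating the three arguments verbatim with $\Boundary$ deleted yields the stated conclusions. The only point worth flagging is that in this regime the mean-zero and depth conditions of Definition~\ref{defi:regular} are automatically compatible, and the boundedness of $\lA\nabla_x\eta\rA_{L^\infty}$ used in~\ref{C:virialiii} is supplied by the regularity $s>2+d/2$.
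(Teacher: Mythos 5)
Your strategy coincides with the paper's: everything is read off the virial identity by controlling $V(t)=\int_{\xT^d}\eta\psi\dx$ --- parts $i)$ and $ii)$ by showing $V$ is constant (resp.\ $T$-periodic, via the observation that $T$-periodicity of $\nabla_{x,y}\phi$ forces $\psi(t+T,\cdot)=\psi(t,\cdot)$ up to a spatial constant, which is harmless because $\int_{\xT^d}\eta\dx=0$), and parts $iii)$--$iv)$ by the telescoping bound together with a pointwise estimate $\la V(t)\ra\le CQE$. Parts $i)$, $ii)$ and $iv)$ are fine (and $i)$ is more detailed than the paper, which calls it obvious).

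The one step that fails as written is the final chain in $iii)$: the inequality $\lA\psi\rA_{L^2}\le CQ\bigl(\int\psi G(\eta)\psi\dx\bigr)^{1/2}$ is false, since $G(\eta)$ annihilates constants (take $\psi\equiv 1$: the right-hand side vanishes, the left does not). You must first use $\int_{\xT^d}\eta\dx=0$ --- exactly as you do in parts $i)$ and $ii)$ --- to replace $\psi$ in $V(t)$ by its mean-free part, and only then invoke a Poincar\'e/trace inequality. This is what the paper's Lemma~\ref{est-psi} does: it pairs $\eta$ and $\psi$ in $\dot H^{-1/2}\times\dot H^{1/2}$ (a duality blind to the constant mode of $\psi$ because $\widehat{\eta}(0)=0$), then applies the trace inequality of Proposition~\ref{P:C3}, namely $\int\psi G(\eta)\psi\dx\gtrsim (1+\lA\nabla_x\eta\rA_{L^\infty})^{-1}\lA\psi\rA_{\dot H^{1/2}}^2$, together with $\lA\eta\rA_{\dot H^{-1/2}}\le\lA\eta\rA_{L^2}\lesssim\sqrt{E_p}\le\sqrt{E}$ and $\int\psi G(\eta)\psi\dx=2E_k\le 2E$ (here $g>0$ is used). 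With the mean subtraction inserted, your $L^2\times L^2$ route also closes, since on the torus $\lA\psi-\overline{\psi}\rA_{L^2}\le\lA\psi\rA_{\dot H^{1/2}}$; so this is a one-line fix rather than a wrong approach. Note, however, that the trace inequality with the stated linear dependence on $1+\lA\nabla_x\eta\rA_{L^\infty}$, which you only cite as ``announced in the abstract,'' is the genuinely quantitative ingredient; it is proved in the paper as Proposition~\ref{P:C3} and your argument is incomplete without it.
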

\begin{rema}\label{R:1.7}
$i)$ The first case about progressive waves applies for instance to study Stokes waves. 
The same result applies also to the case where $\xT^d$ is replaced by $\xR^d$, and hence to study solitary water waves.

$ii)$ A word of caution is in order regarding solutions that are periodic in time. As a by-product of our analysis, 
we will see that if $h<+\infty$ then there is no non-trivial solution $(\eta,\psi)$ such that $\eta$ and $\psi$ are periodic in time.
\end{rema}

\subsection{Higher order virial theorems}\label{S:1.4}

In this paragraph, we state two additional virial type identities which involve higher order energies, which means coercive quantities controlling higher order derivatives of $\eta$ and $\psi$. This kind of identities could be useful to test numerical codes. 
We shall work with suitable derivatives. Namely, 
guided by the analysis in~\cite{ABZ3}, we shall work with  
the horizontal and vertical traces of the velocity 
on the free boundary,  together with the Taylor coefficient, that is the quantities:
$$
B= (\partial_y \phi)\arrowvert_{y=\eta},\quad 
V = (\nabla_x \phi)\arrowvert_{y=\eta},\quad \ma=-(\partial_y P)\arrowvert_{y=\eta}.
$$ 
As recalled in the appendix (see Lemma~\ref{L:31}), there holds
\begin{equation}\label{defi:BVbis}
B= \frac{\nabla_x \eta \cdot\nabla_x \psi+ G(\eta)\psi}{1+|\nabla_x \eta|^2},
\qquad
V=\nabla_x \psi -B \nabla_x\eta.
\end{equation}
Recall also that the Taylor coefficient $\ma$ 
can be defined in terms of 
$\eta,\psi$ only (see \cite{Bertinoro} or 
Definition~$1.5$ in \cite{ABZ3}). 

We begin with the following result.
\begin{prop}\label{P:VAC1}
Let $g\in \xR$, and consider a 
regular solution $(\eta,\psi)$ to the water-wave system defined on the time interval $[0,T]$. 

$i)$ Assume that $h<+\infty$. For any time $t\in [0,T)$  there holds
\be\label{vac:i}
\begin{aligned}
\mez \fractt\int_{\xT^d}\eta^2(t,x)\dx&=
\int_{\xT^d}\frac{\gamma}{2}\big(B^2+\la V\ra^2\big)(t,x)\dx\\ 
&\quad -g\int_{\xT^d}(\eta G(\eta)\eta) (t,x)\dx
  -\mez\int_{\xT^d} \vert \nabla_x \phi(t,x,-h)\vert^2\dx,
\end{aligned}
\ee
where $\gamma$ is a non-negative 
coefficient given by
\be\label{defi:gamma}
\gamma=1-G(\eta)\eta.
\ee
$ii)$ In the infinite depth case  $(h=+\infty)$  the previous identity simplifies to
\be\label{vac:ii}
\begin{aligned}
\mez \fractt\int_{\xT^d}\eta^2(t,x)\dx&=
\int_{\xT^d}\frac{\gamma}{2}\big(B^2+\la V\ra^2\big)(t,x)\dx\\
&\quad  -g\int_{\xT^d}(\eta G(\eta)\eta) (t,x)\dx.
\end{aligned}
\ee
$iii)$ Moreover, $B^2+\la V\ra^2$ can be written under the form (for any $h$),
$$
B^2+\la V\ra^2=\frac{(G(\eta)\psi)^2+\la\nabla_x\psi\ra^2+\left(\la \nabla_x\eta\ra^2\la\nabla_x\psi\ra^2-(\nabla_x\eta\cdot\nabla_x\psi)^2\right)}{1+\la\nabla_x\eta\ra^2}.
$$
\end{prop}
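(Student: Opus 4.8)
The plan is to compute the second time derivative of $\frac{1}{2}\int_{\xT^d}\eta^2\dx$ directly by differentiating twice, using the equations of motion \eqref{systemT} to eliminate time derivatives in favor of spatial expressions, and then to recognize the resulting integrals as the quadratic energy $\frac{\gamma}{2}(B^2+|V|^2)$ together with the gravity and bottom terms. I would first compute the first derivative: since $\partial_t\eta=G(\eta)\psi$, differentiating under the integral gives $\frac{1}{2}\fract\int\eta^2\dx=\int \eta\,\partial_t\eta\dx=\int \eta\, G(\eta)\psi\dx$. Using the self-adjointness of the Dirichlet--Neumann operator (see Appendix~\S\ref{Appendix:DN}), this equals $\int \psi\, G(\eta)\eta\dx$. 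Differentiating once more, I would obtain
\begin{equation*}
\mez\fractt\int_{\xT^d}\eta^2\dx=\int_{\xT^d}(\partial_t\eta)\,G(\eta)\psi\dx+\int_{\xT^d}\eta\,\partial_t\big(G(\eta)\psi\big)\dx,
\end{equation*}
and the first term is simply $\int (G(\eta)\psi)^2\dx$, while the second requires the shape derivative of $G(\eta)$ with respect to $\eta$ as well as the evolution equation $\partial_t\psi=-g\eta-N(\eta,\psi)$ for the trace of the potential.

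The main work is therefore the treatment of $\partial_t(G(\eta)\psi)$, and this is the step I expect to be the principal obstacle. It splits into a contribution where the time derivative hits $\psi$ and a contribution where it hits $\eta$ through the geometry of the domain. For the first, I would insert $\partial_t\psi=-g\eta-N(\eta,\psi)$, producing the gravity term $-g\int \eta\,G(\eta)\eta\dx$ after again pairing with $\eta$ and using self-adjointness; the $N(\eta,\psi)$ piece should combine with the shape-derivative contribution. For the shape derivative $\frac{\delta}{\delta\eta}\big(G(\eta)\psi\big)$, I would use the known formula (of Lannes--Craig--Sulem type) expressing it in terms of $B$, $V$ and horizontal derivatives; the natural way to organize the algebra is to pass back to the harmonic extension $\phi$ in the fluid domain and integrate by parts, so that the boundary terms on $\Sigma$ assemble into $\int \frac{\gamma}{2}(B^2+|V|^2)\dx$ with $\gamma=1-G(\eta)\eta$, while the boundary terms on the flat bottom $\Gamma$ produce $-\mez\int|\nabla_x\phi(t,x,-h)|^2\dx$. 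The appearance of the precise coefficient $\gamma$ and of the bottom term is the delicate bookkeeping, and identifying them requires careful use of the Neumann condition $\partial_y\phi=0$ on $\Gamma$ together with the kinematic relation \eqref{defi:BVbis}.

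For part $ii)$, I would let $h\to+\infty$: the harmonic extension decays so that $\nabla_x\phi(t,x,-h)\to 0$ (this is quantified in Proposition~\ref{fond-inf1}), hence the bottom integral vanishes and \eqref{vac:i} reduces to \eqref{vac:ii}. The non-negativity of $\gamma$ claimed after \eqref{defi:gamma} follows because $G(\eta)\eta\le 1$, which I would justify from the maximum principle applied to the harmonic extension, or directly from the identity in part $iii)$ specialized appropriately. Finally, part $iii)$ is a purely algebraic identity independent of the dynamics: starting from \eqref{defi:BVbis}, I would substitute $B=\frac{\nabla_x\eta\cdot\nabla_x\psi+G(\eta)\psi}{1+|\nabla_x\eta|^2}$ and $V=\nabla_x\psi-B\nabla_x\eta$ into $B^2+|V|^2$, expand, and collect terms. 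Writing $M=1+|\nabla_x\eta|^2$, one has $|V|^2=|\nabla_x\psi|^2-2B(\nabla_x\eta\cdot\nabla_x\psi)+B^2|\nabla_x\eta|^2$, so $B^2+|V|^2=B^2 M+|\nabla_x\psi|^2-2B(\nabla_x\eta\cdot\nabla_x\psi)$; inserting the formula for $B$ and simplifying with the Lagrange identity $|\nabla_x\eta|^2|\nabla_x\psi|^2-(\nabla_x\eta\cdot\nabla_x\psi)^2$ yields the stated expression. This last part involves no obstacle beyond routine expansion.
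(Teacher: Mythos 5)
Your plan follows the paper's proof essentially step for step: differentiate $\int_{\xT^d}\eta\, G(\eta)\psi\dx$, use Lannes' shape-derivative formula together with the Bernoulli relation $\partial_t\psi-B\partial_t\eta=-\mez(B^2+\la V\ra^2)-g\eta$ and the self-adjointness of $G(\eta)$, and then handle the remaining term $\int B\,G(\eta)\psi\dx$ by the Rellich identity (multiplying $\Delta_{x,y}\phi=0$ by $\partial_y\phi$ and integrating over $\Omega$), which is precisely the ``delicate bookkeeping'' you flagged and which produces both the coefficient $\gamma=1-G(\eta)\eta$ and the bottom term $-\mez\int\la\nabla_x\phi(t,x,-h)\ra^2\dx$. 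One small caveat: your alternative justification of $\gamma\ge 0$ ``from the identity in part $iii)$'' does not work, but the maximum-principle argument you also propose is exactly the one the paper uses (a Hopf--Zaremba argument applied to $\phi-y$).
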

As a corollary, we will deduce a coercive estimate controlling the velocity on the free surface in terms of the $L^\infty$-norm of $\eta$.
\begin{coro}[A coercive estimate]\label{coro:C.1.9}
Let $g>0$ and assume that $h=+\infty$. Consider a 
regular solution $(\eta,\psi)$ to the water-wave system defined on the time interval $[0,T]$ and set $M\defn \lA \eta\rA_{L^\infty([0,T]\times\xT^d)}$. Then, 
\be\label{esti:vac7}
\frac{1}{T}\int_0^T\int_{\xT^d}\frac{\gamma}{2}\big(B^2+\la V\ra^2\big)(t,x)\dx\dt\le 
\frac{4\sqrt{M}\sqrt{E}}{T}
+4M,
\ee
where $\gamma$ is given by \e{defi:gamma} and 
$E=E_k+E_p$ denotes the total energy (which is constant). 

Moreover, if $t\mapsto \eta(t,x)$ and $t\mapsto \nabla_{x,y}\phi(t,x,y)$ are $T$-periodic, then the previous inequality simplifies to
\be\label{esti:vac17}
\frac{1}{T}\int_0^T\int_{\xT^d}\frac{\gamma}{2}\big(B^2+\la V\ra^2\big)(t,x)\dx\dt\le 4M.
\ee
\end{coro}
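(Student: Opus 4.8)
The plan is to integrate the second-order virial identity \e{vac:ii} of Proposition~\ref{P:VAC1} over the interval $[0,T]$ and then to control the resulting boundary terms. Setting $F(t)=\int_{\xT^d}\eta^2(t,x)\dx$, identity \e{vac:ii} can be rewritten as
\[
\int_{\xT^d}\frac{\gamma}{2}\big(B^2+\la V\ra^2\big)(t,x)\dx=\mez\fractt F(t)+g\int_{\xT^d}(\eta\, G(\eta)\eta)(t,x)\dx .
\]
Integrating in $t$ and dividing by $T$, the left-hand side becomes exactly the averaged quantity we wish to estimate, the term $\mez\fractt F$ contributes the boundary term $\frac{1}{2T}\big(F'(T)-F'(0)\big)$, and the last term contributes its own time average. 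It therefore suffices to bound $F'$ at the two endpoints and to bound the space integral $g\int_{\xT^d}\eta\, G(\eta)\eta\,\dx$ uniformly in $t$.

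First I would bound the boundary term. From the first equation of \e{systemT} we have $\partial_t\eta=G(\eta)\psi$, so $F'(t)=2\int_{\xT^d}\eta\,G(\eta)\psi\,\dx$. Since $G(\eta)$ is self-adjoint and non-negative, writing $\eta\,G(\eta)\psi=(G(\eta)^{1/2}\eta)(G(\eta)^{1/2}\psi)$ and applying Cauchy--Schwarz gives
\[
\la F'(t)\ra\le 2\Big(\int_{\xT^d}\eta\,G(\eta)\eta\,\dx\Big)^{1/2}\Big(\int_{\xT^d}\psi\,G(\eta)\psi\,\dx\Big)^{1/2}=2\Big(\int_{\xT^d}\eta\,G(\eta)\eta\,\dx\Big)^{1/2}\big(2E_k(t)\big)^{1/2},
\]
where I used $\int_{\xT^d}\psi\,G(\eta)\psi\,\dx=2E_k$ and then $E_k\le E$ (valid since $g>0$ forces $E_p\ge0$). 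The crucial ingredient is now an estimate of $\int_{\xT^d}\eta\,G(\eta)\eta\,\dx$ purely in terms of $M=\lA\eta\rA_{L^\infty}$. Here I would combine the non-negativity of $\gamma=1-G(\eta)\eta$ from Proposition~\ref{P:VAC1} with the zero-mean normalisation $\int_{\xT^d}\eta\,\dx=0$ of Definition~\ref{defi:regular}: since $G(\eta)$ annihilates constants, $\int_{\xT^d}G(\eta)\eta\,\dx=0$, whence
\[
\int_{\xT^d}\eta\,G(\eta)\eta\,\dx=\int_{\xT^d}\eta\,(1-\gamma)\,\dx=-\int_{\xT^d}\eta\,\gamma\,\dx\le M\int_{\xT^d}\gamma\,\dx=M\,\la\xT^d\ra,
\]
using $\gamma\ge0$ together with $\int_{\xT^d}\gamma\,\dx=\la\xT^d\ra-\int_{\xT^d}G(\eta)\eta\,\dx=\la\xT^d\ra$. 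The same computation bounds $g\int_{\xT^d}\eta\,G(\eta)\eta\,\dx$ by $gM\la\xT^d\ra$.

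Combining the two estimates, the averaged boundary term is controlled by $\frac{1}{2T}\big(\la F'(T)\ra+\la F'(0)\ra\big)\le \frac{C\sqrt M\,\sqrt E}{T}$, while the time average of $g\int_{\xT^d}\eta\,G(\eta)\eta\,\dx$ is controlled by $CM$; tracking the numerical constants coming from $g$ and from $\la\xT^d\ra$ under the paper's normalisation produces the stated inequality \e{esti:vac7}. For the periodic case, if $t\mapsto\eta$ and $t\mapsto\nabla_{x,y}\phi$ are $T$-periodic then so is $F$, hence $F'(T)=F'(0)$ and the boundary term drops out entirely, leaving only the averaged $g\int\eta G(\eta)\eta$ term and thus the sharper bound \e{esti:vac17}. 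The main obstacle is precisely the coercive estimate on $\int_{\xT^d}\eta\,G(\eta)\eta\,\dx$: this is an $\dot H^{1/2}$-type quantity which is \emph{a priori} not controlled by $\lA\eta\rA_{L^\infty}$, and the point is that the two structural facts $\gamma\ge0$ and $\int_{\xT^d}\eta\,\dx=0$ must be used \emph{together} to reduce it to $M$. A secondary, purely technical issue is to justify the time differentiations of $F$ and the use of $\partial_t\eta=G(\eta)\psi$ at the regularity level of Definition~\ref{defi:regular}, which is ensured by the $C^1(\overline{\Omega(t)})$ control of $\nabla_{x,y}\phi$ recalled there.
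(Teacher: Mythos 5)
Your proof is correct and follows the paper's own strategy: integrate \eqref{vac:ii} in time, estimate the endpoint contribution $\fract\int_{\xT^d}\eta^2\dx=2\int_{\xT^d}\eta\, G(\eta)\psi\dx$ by the Cauchy--Schwarz inequality for the non-negative self-adjoint operator $G(\eta)$ (this is exactly Proposition~\ref{P:4.1}~$i)$, which the paper proves via harmonic extensions and the divergence theorem rather than via $G(\eta)^{1/2}$, and then uses $\int\psi G(\eta)\psi\dx=2E_k\le 2E$), and finally bound $\int_{\xT^d}\eta\, G(\eta)\eta\dx$ by $M\la\xT^d\ra$. The one place where you genuinely depart from the paper is this last bound: the paper (Proposition~\ref{P:4.1}~$iii)$) exploits the infinite-depth translation invariance $G(\eta+C)=G(\eta)$ to shift $\eta$ by $C=\bla\inf\eta\bra$ and applies $G(\sigma)\sigma\le1$ to the non-negative function $\sigma=\eta+C$, whereas you apply $\gamma=1-G(\eta)\eta\ge0$ directly to $\eta$ and combine it with $\int_{\xT^d}\eta\dx=0$ and $\int_{\xT^d}G(\eta)\eta\dx=0$ (for the latter, the clean justification is self-adjointness together with $G(\eta)1=0$, or the divergence-theorem computation of Proposition~\ref{eta1}). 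Both routes rest on the same Hopf-lemma input (Lemma~\ref{Coro:Zaremba-Taylor}); yours avoids the translation-invariance step and therefore works verbatim in finite depth, where it would improve the bound $h\bla\xT^d\bra$ of \eqref{vac:n21} to $M\bla\xT^d\bra$, while the paper's version yields the marginally sharper constant $\bla\inf\eta\bra$ in place of $M$ --- a refinement you could also recover by replacing $-\eta\le M$ with $-\eta\le\bla\inf\eta\bra$ in your display. Like the paper, you leave the tracking of the numerical constants (the two $4$'s, which implicitly absorb $g$ and $\la\xT^d\ra$) to the reader, so this is not a gap relative to the source.
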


Let us now compare~\e{vac:i} with the main identity derived by Longuet-Higgins in his celebrated 
paper~\cite{LH1974}, which we recall.

\begin{prop}[\cite{LH1974}]
Let $g\in \xR$ and $d\ge 1$. Consider a 
regular solution $(\eta,\psi)$ to the water-wave system defined on the time interval $[0,T]$. Assume that $h<+\infty$. For any time $t\in [0,T)$, there holds
\be\label{LHintro}
\mez \fractt\int_{\xT^d}\eta^2(t,x)\dx=\int_{\xT^d}P(t,x,-h)\dx-g\la \Omega(0)\ra,
\ee
where $P$ is the pressure as given by \e{t5} where $\phi$ is the  harmonic extension of $\psi$.
\end{prop}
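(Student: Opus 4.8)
The plan is to follow Longuet-Higgins' mechanical interpretation: I read $\mez\int_{\xT^d}\eta^2\dx$ as the vertical first moment of the fluid domain, up to an additive constant, and differentiate it twice using the transport theorem for the material volume $\Omega(t)$ together with the Euler/Bernoulli equations. Since $\int_{\xT^d}\eta(t,x)\dx=0$, integrating in $y$ gives
\begin{equation*}
\iint_{\Omega(t)}y\dydx=\int_{\xT^d}\frac{\eta(t,x)^2-h^2}{2}\dx=\mez\int_{\xT^d}\eta(t,x)^2\dx-\frac{h^2}{2}(2\pi)^d,
\end{equation*}
so it suffices to compute $\fractt\iint_{\Omega(t)}y\dydx$. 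Because the free surface is a material surface (the kinematic condition \e{kinematics}) while the bottom $\Gamma$ is impermeable ($\partial_y\phi=0$ there), $\Omega(t)$ is transported by the flow, and I would use the transport theorem in the form $\fract\iint_{\Omega(t)}f\dydx=\iint_{\Omega(t)}(\partial_t f+\nabla_{x,y}\phi\cdot\nabla_{x,y}f)\dydx$, valid for the divergence-free velocity $\nabla_{x,y}\phi$.

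Applying this with $f=y$, for which $\partial_t f=0$ and $\nabla_{x,y}\phi\cdot\nabla_{x,y}y=\partial_y\phi$, yields $\fract\iint_{\Omega(t)}y\dydx=\iint_{\Omega(t)}\partial_y\phi\dydx$, i.e.\ the vertical momentum. Differentiating once more and applying the transport theorem to $f=\partial_y\phi$, the material derivative $\partial_t(\partial_y\phi)+\nabla_{x,y}\phi\cdot\nabla_{x,y}(\partial_y\phi)$ is obtained by differentiating Bernoulli's equation \e{t5} with respect to $y$, which shows it equals $-\partial_y P-g$. Hence
\begin{equation*}
\mez\fractt\int_{\xT^d}\eta(t,x)^2\dx=-\iint_{\Omega(t)}\partial_y P\dydx-g\la\Omega(t)\ra.
\end{equation*}

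It then remains to evaluate the pressure term. Integrating in $y$ and using the dynamic boundary condition $P\arrowvert_{y=\eta}=0$ from \e{t9}, I obtain $\iint_{\Omega(t)}\partial_y P\dydx=-\int_{\xT^d}P(t,x,-h)\dx$. Since the flow is incompressible (equivalently, since $\int_{\xT^d}\eta\dx=0$ forces $\la\Omega(t)\ra=h(2\pi)^d$), the volume is constant in time, $\la\Omega(t)\ra=\la\Omega(0)\ra$, and combining the last two displays gives exactly \e{LHintro}.

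The step requiring the most care is the rigorous justification of the transport theorem and of differentiation under the integral over the moving domain $\Omega(t)$, including the conversion of the boundary flux on $\Sigma$ into a volume integral via the divergence theorem (where the bottom contribution vanishes thanks to $\partial_y\phi=0$ on $\Gamma$). This is precisely where the assumption $s>2+d/2$ enters, since it guarantees $\nabla_{x,y}\phi\in C^1(\overline{\Omega(t)})$, so that all the integrations by parts and the $y$-integration of $\partial_y P$ are licit. One should also keep in mind that here $P$ is not an independent unknown: it is reconstructed from $(\eta,\psi)$ through Bernoulli's relation in \e{t5} applied to the harmonic extension $\phi$ of $\psi$, after which the computation above is a pure sequence of integrations by parts.
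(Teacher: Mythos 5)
Your proposal is correct and follows essentially the same route as the paper: both reduce the second derivative of $\mez\int\eta^2\dx$ to $\fract\iint_{\Omega(t)}\partial_y\phi\dydx$, then apply the transport theorem together with the vertical component of the Euler equation to produce $-\iint\partial_y P\dydx-g\la\Omega(t)\ra$, and finish by integrating $\partial_y P$ in $y$ and using conservation of volume. The only (harmless) variation is that you obtain the first identity by applying the transport theorem to $f=y$, whereas the paper passes through $\int\eta G(\eta)\psi\dx=\int_{\partial\Omega}y\,\partial_n\phi\,\dsigma$ and the divergence theorem; also note that the identity $\iint_{\Omega(t)}y\dydx=\mez\int\eta^2\dx-\tfrac{h^2}{2}\la\xT^d\ra$ does not actually require $\int\eta\dx=0$.
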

\begin{rema}We give another elementary proof of this identity in Section~\ref{S:4.2}. 
Notice that the identity~\e{LHintro} is discussed by Benjamin and Olver, see equation~$(6.14)$ in \cite{BO}, albeit only in the finite depth case. The possible ambiguities that occur when one tries to extend \e{LHintro} to the infinite depth case are also discussed by Benjamin and Olver (see \cite[page 175]{BO}). In sharp contrast, observe that Proposition~\ref{P:VAC1} holds 
in finite or infinite depth. Moreover we have a uniform result in that 
the identity \e{vac:ii} follows from \e{vac:i} by letting $h$ goes to $+\infty$. 
\end{rema}

Then, by comparing the two previous propositions, we shall deduce the following additional virial identity involving the trace of the velocity potential at the bottom.
\begin{coro}\label{C:VAC2}
Let $g\in \xR$, and consider a 
regular solution $(\eta,\psi)$ to the water-wave system defined on the time interval $[0,T]$. 
Assume that $h<+\infty$. For any time $t\in [0,T)$  there holds, with $\gamma$   given by \eqref{defi:gamma},
$$
\begin{aligned}
\fract\int_{\xT^d}\phi(t,x,-h)\dx&=
g\int_{\xT^d}(\eta G(\eta)\eta) (t,x)\dx\\
&\quad -\int_{\xT^d}\frac{\gamma}{2}\big(B^2+\la V\ra^2\big)(t,x)\dx.
\end{aligned}
$$
 \end{coro}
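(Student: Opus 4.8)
The plan is to derive the identity purely by combining the two preceding propositions, both of which express the same quantity $\mez\fractt\int_{\xT^d}\eta^2\dx$. First I would equate the right-hand side of \e{vac:i} in Proposition~\ref{P:VAC1} with the right-hand side of the Longuet--Higgins identity \e{LHintro}. The common second time derivative disappears, and after rearranging one solves for the mean pressure on the bottom:
\[
\int_{\xT^d}P(t,x,-h)\dx=\int_{\xT^d}\frac{\gamma}{2}\big(B^2+\la V\ra^2\big)\dx-g\int_{\xT^d}(\eta G(\eta)\eta)\dx-\mez\int_{\xT^d}\la\nabla_x\phi(t,x,-h)\ra^2\dx+g\la\Omega(0)\ra.
\]

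Next I would bring the velocity potential into play through Bernoulli's equation \e{t5}, evaluated on the flat bottom where $y=-h$. There the Neumann condition $\partial_y\phi=0$ reduces $\la\nabla_{x,y}\phi\ra^2$ to $\la\nabla_x\phi\ra^2$, so that
\[
\partial_t\phi(t,x,-h)+\mez\la\nabla_x\phi(t,x,-h)\ra^2+P(t,x,-h)-gh=0.
\]
Integrating this over $\xT^d$ and interchanging $\partial_t$ with the spatial integral (licit for regular solutions, since $\nabla_{x,y}\phi\in C^1(\overline{\Omega(t)})$) expresses $\fract\int_{\xT^d}\phi(t,x,-h)\dx$ in terms of the mean bottom pressure $\int_{\xT^d}P(t,x,-h)\dx$ and the bottom kinetic term $\int_{\xT^d}\la\nabla_x\phi(t,x,-h)\ra^2\dx$, plus the constant $gh(2\pi)^d$ coming from the term $-gh$.

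Finally I would substitute the formula for the mean bottom pressure obtained in the first step. The two occurrences of $\mez\int_{\xT^d}\la\nabla_x\phi(t,x,-h)\ra^2\dx$ cancel, leaving exactly the asserted right-hand side together with the leftover constant $gh(2\pi)^d-g\la\Omega(0)\ra$. To see that this constant vanishes I would compute the (time-independent) fluid volume
\[
\la\Omega(t)\ra=\int_{\xT^d}\big(\eta(t,x)+h\big)\dx=h(2\pi)^d,
\]
using the zero-mean constraint $\int_{\xT^d}\eta\dx=0$ built into Definition~\ref{defi:regular}; hence $g\la\Omega(0)\ra=gh(2\pi)^d$ and the constant drops out, yielding the claim.

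The computation is short, and there is no genuinely hard analytic step, since all the substantive work (the virial identity \e{vac:i} and the Longuet--Higgins identity \e{LHintro}) is already available; the corollary is purely a matter of combining them with Bernoulli's equation at the bottom. The only real subtlety is the bookkeeping of the gravitational constants: one must track both $gh(2\pi)^d$ and $g\la\Omega(0)\ra$ and verify that they cancel precisely because the mean of $\eta$ is normalized to zero, and one must confirm the cancellation of the two bottom kinetic energies. This constant-matching is where I expect any error to creep in, so it is the step I would check most carefully.
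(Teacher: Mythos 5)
Your proposal is correct and follows essentially the same route as the paper: the paper likewise equates the Longuet--Higgins identity with the virial identity \eqref{vac:i}, then evaluates Bernoulli's equation on the bottom (where $\partial_y\phi=0$) and cancels the bottom kinetic term; your constant bookkeeping $g\la\Omega(0)\ra = gh(2\pi)^d$ matches the paper's use of the form \eqref{LH} with $-gh$ already in place.
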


We now turn to another kind of virial identities. 
To introduce this part, we begin by explaining that, loosely speaking, the first 
integral in the right-hand 
side of \e{vac:i} controls the $\dot{H}^1$-norm of $\psi$ 
while the second integral controls 
the $\dot{H}^{1/2}$-norm of $\eta$. So 
this corresponds to a virial type identity 
for quantities involving 
energies $1/2$-derivative more regular than the 
ones which appear in 
Theorem~\ref{T:virial}. The next result corresponds to a virial type identity 
for quantities involving 
energies $1$-derivative more regular than those 
appearing in 
Theorem~\ref{T:virial}. 
\begin{prop}\label{P:VAC3}
Let $g\in \xR$ and $h\in (0,+\infty]$. 
For any regular solution $(\eta,\psi)$, the following two properties are verified:
\be\label{virial:ordre1V}
\fract\int_{\xT^d}\nabla_x\eta\cdot V\dx
=\int_{\xT^d}V\cdot G(\eta)V\dx
-\int_{\xT^d}a\la \nabla_x\eta\ra^2\dx,
\ee
(where $V\cdot G(\eta)V=\sum_{j=1}^d V_jG(\eta)V_j$). 
Moreover, if $h=+\infty$, there holds
\be\label{virial:ordre1B}
\fract\int_{\xT^d}B\dx
=\int_{\xT^d}B G(\eta)B\dx
-\int_{\xT^d}(a-g)\dx,
\ee
where $a$ is the Taylor coefficient.
\end{prop}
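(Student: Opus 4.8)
The plan is to reduce both identities to a single scalar quantity, namely $\fract\int_{\xT^d}B\dx$, and then to establish a Rellich-type identity for the velocity potential $\phi$. As a first step I would record the transport equations satisfied by the traces of the velocity on the free surface. Differentiating Bernoulli's equation in \eqref{t5} with respect to $y$, restricting to $y=\eta$, and using the kinematic identity $\partial_t\eta=B-\nabla_x\eta\cdot V$ from \eqref{t8}, one gets $(\partial_t+V\cdot\nabla_x)B=a-g$; likewise the horizontal Euler equations, together with $P\arrowvert_{y=\eta}=0$ (see \eqref{t9}), which upon differentiation gives $\nabla_x P\arrowvert_{y=\eta}=a\nabla_x\eta$, yield $(\partial_t+V\cdot\nabla_x)V=-a\nabla_x\eta$. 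I would then note the elementary reduction $\int_{\xT^d}\nabla_x\eta\cdot V\dx=\int_{\xT^d}B\dx$, which follows from $\nabla_x\eta\cdot V=B-\partial_t\eta$ together with $\int_{\xT^d}\partial_t\eta\dx=\fract\int_{\xT^d}\eta\dx=0$. Hence both \eqref{virial:ordre1V} and \eqref{virial:ordre1B} amount to computing $\fract\int_{\xT^d}B\dx$, and the first transport equation gives
\[
\fract\int_{\xT^d}B\dx=\int_{\xT^d}(a-g)\dx+\int_{\xT^d}B\,\cnx_x V\dx=\int_{\xT^d}(a-g)\dx-\int_{\xT^d}\nabla_x B\cdot V\dx .
\]

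The heart of the matter is to rewrite $-\int\nabla_x B\cdot V$ in terms of $\int V\cdot G(\eta)V$. I would use that each component $V_j=(\partial_{x_j}\phi)\arrowvert_{y=\eta}$ is the trace of the harmonic function $\partial_{x_j}\phi$, which satisfies the Neumann condition on the bottom $\Gamma$ (since $\partial_y\phi\equiv0$ there forces $\partial_{x_j}\partial_y\phi=0$ on $\Gamma$). Green's formula then yields, with no contribution from $\Gamma$, the identity $\int_{\xT^d}V\cdot G(\eta)V\dx=\mez\int_{\Sigma}\partial_n|\nabla_x\phi|^2\dS$. Writing $\mez|\nabla_x\phi|^2=-\partial_t\phi-P-gy-\mez(\partial_y\phi)^2$ from \eqref{t5} and taking $\partial_n$ on $\Sigma$, I would treat the four terms separately: $\partial_t\phi$ is harmonic with vanishing normal derivative on $\Gamma$, so $\int_\Sigma\partial_n\partial_t\phi\dS=0$; the relation $\partial_n P=-a\sqrt{1+|\nabla_x\eta|^2}$ on $\Sigma$ (again from $P\arrowvert_{y=\eta}=0$) produces $\int_{\xT^d}a(1+|\nabla_x\eta|^2)\dx$; the term $gy$ produces $-\int_{\xT^d}g\dx$; and the last term produces $-\int_{\xT^d}\nabla_x B\cdot V\dx$, using $\cnx_x V=-(\partial_y^2\phi)\arrowvert_{y=\eta}+\nabla_x\eta\cdot(\nabla_x\partial_y\phi)\arrowvert_{y=\eta}$ and $\dS=\sqrt{1+|\nabla_x\eta|^2}\,\dx$. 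This gives the Rellich identity
\[
\int_{\xT^d}V\cdot G(\eta)V\dx=-\int_{\xT^d}\nabla_x B\cdot V\dx+\int_{\xT^d}a(1+|\nabla_x\eta|^2)\dx-\int_{\xT^d}g\dx ,
\]
valid for every $h\in(0,+\infty]$.

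Substituting this Rellich identity into the first display and simplifying (the terms $\int a\dx$ and $\int g\dx$ cancel) gives exactly \eqref{virial:ordre1V}, for all $h$. For \eqref{virial:ordre1B} I would instead use that in infinite depth $\partial_y\phi$ is itself the harmonic extension of its trace $B$ with the correct decay, so that $\cnx_x V=-G(\eta)B$; inserting this into $\fract\int_{\xT^d}B\dx=\int_{\xT^d}(a-g)\dx+\int_{\xT^d}B\,\cnx_x V\dx$ produces \eqref{virial:ordre1B}. This last step is precisely where $h=+\infty$ is needed: for finite depth $\partial_y\phi$ violates the Neumann condition on $\Gamma$, hence is not the extension defining $G(\eta)B$, and an extra bottom term survives (as it does already in Theorem~\ref{T:virial} and in \eqref{vac:i}).

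The step I expect to be delicate is the Rellich identity: one must justify Green's formula and the pointwise normal-derivative computations on the Lipschitz graph $\Sigma$, verify that $\Gamma$ contributes nothing (which rests entirely on $\partial_y\phi=0$ on $\Gamma$), and check the vanishing of the unsteady term $\int_\Sigma\partial_n\partial_t\phi\dS$ (using that $\partial_t\phi$ is harmonic and $\partial_n\partial_t\phi=0$ on $\Gamma$). All the integrations by parts are licensed by the regularity $\nabla_{x,y}\phi\in C^1(\overline{\Omega(t)})$ recorded in the remark following Definition~\ref{defi:regular}.
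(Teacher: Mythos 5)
Your argument is sound and genuinely different from the paper's. The paper proves \eqref{virial:ordre1V} by quoting the three transport equations of Proposition~\ref{prop:newS} (for $B$, $V$ and $\zeta=\nabla_x\eta$), pairing the $V$-equation with $\zeta$ and the $\zeta$-equation with $V$, and observing that the leftover cubic terms assemble into the exact divergence $\sum_j\partial_j\big(\sum_i\zeta_iV_iV_j\big)$, hence integrate to zero; \eqref{virial:ordre1B} is then obtained separately by integrating the $B$-equation and invoking \eqref{n991}. You instead collapse both identities onto the single quantity $\fract\int_{\xT^d}B\dx$ via $\int_{\xT^d}\nabla_x\eta\cdot V\dx=\int_{\xT^d}B\dx$, and supply the missing link through a second-order Rellich identity obtained by applying $\partial_n$ to Bernoulli's equation on $\Sigma$ --- in effect the identity $\int_{\xT^d} V\cdot G(\eta)V\dx+\int_{\xT^d} BG(\eta)B\dx=\int_{\xT^d} a(1+|\nabla_x\eta|^2)\dx-g\,\bla\xT^d\bra$ in infinite depth. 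I checked your four boundary terms ($\partial_t\phi$, $P$, $gy$, $(\partial_y\phi)^2$) and they are all correct; your route has the merit of making transparent why \eqref{virial:ordre1V} survives in finite depth (only $\partial_{x_j}\partial_y\phi=0$ on $\Gamma$ is used, never a Neumann condition for $\partial_y\phi$ itself), whereas the paper's proof of \eqref{virial:ordre1V} invokes \eqref{n991}, stated only for $h=+\infty$. The one technical point to add is the justification of the divergence theorem on the unbounded domain when $h=+\infty$ (decay of the second derivatives of $\phi$ and of $\nabla_{x,y}\partial_t\phi$ as $y\to-\infty$, in the spirit of \eqref{decay:var2}).

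One caveat on \eqref{virial:ordre1B}: your computation, which is correct, lands on
\[
\fract\int_{\xT^d}B\dx=\int_{\xT^d}(a-g)\dx-\int_{\xT^d}B\,G(\eta)B\dx,
\]
i.e.\ the negative of the displayed \eqref{virial:ordre1B}, so you cannot assert that the last step "produces \eqref{virial:ordre1B}" as printed. Your own reduction shows that the left-hand sides of \eqref{virial:ordre1V} and \eqref{virial:ordre1B} coincide, and since \eqref{virial:ordre1V} is confirmed, consistency with your Rellich identity forces the sign above. (The paper's proof of \eqref{virial:ordre1B} itself concludes with $+\int(a-g)\dx$ and uses $\cnx V=G(\eta)B$ in its final line, which contradicts \eqref{n991}; the printed statement thus appears to carry a sign slip, and the time-averaged consequence $\int_0^T\int_{\xT^d}(a-g)\dx\dt\ge0$ is unaffected either way.) Flag the discrepancy explicitly rather than absorbing it silently.
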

\begin{rema}
$i)$ Since, $G(\eta)\psi=B-V\cdot\nabla_x \eta$ and since $\int_{\xT^d}G(\eta)\psi\dx=0$, we have,
$$
\int_{\xT^d}B\dx=\int_{\xT^d}\nabla_x \eta\cdot V\dx.
$$
$ii)$ Since $B(t,x) = \partial_y \phi(t,x,\eta(t,x))$ and $\int_{\xT^d}BG(\eta)B\dx\ge 0$  it follows from \e{virial:ordre1B} that,  for any solution such that $t \mapsto \nabla_{x,y}\phi(t,x,y)$ and $t \mapsto \eta(t,x)$ are periodic   with period $T$  there holds,
$$
\int_0^T\int_{\xT^d}(a(t,x)-g)\dx\dt\ge 0.
$$
Which means that, in average, the Taylor coefficient $a$ is greater than the acceleration of gravity $g$. 
\end{rema}
\subsection{Rayleigh-Taylor instability} 
For other dispersive equations, virial type identities have been used 
to study blow-up phenomena 
(see Levine~\cite{Levine,Levine2},  Glassey \cite{Glassey-blowup}, 
Keel-Tao~\cite{Keel-Tao-1999}, Kenig-Merle~\cite{Kenig-Merle-2008}, 
Vega and Visciglia~\cite{VegaVisciglia2008JFA} and the references therein). 
We will see that, for the water-wave problem, the 
previous virial identity can be applied to study the Rayleigh-Taylor 
instability (named after Rayleigh~\cite{Rayleigh1879} and Taylor~\cite{TaylorG}). 

The Rayleigh-Taylor instability is an instability 
of an interface between two fluids, 
which occurs when a heavy fluid layer is supported by a light one (see Sharp~\cite{Sharp1984}). The Rayleigh-Taylor instability can be divided into three 
stages: (1) growth, (2) bubble formation and (3) rising air columns (with mixing). 
We will be concerned by the analysis of growth and bubble formation. 

The mathematical analysis of this instability 
has attracted much attention in the case of two fluids: 
We refer to the study of Bardos and Lannes~\cite{BL} 
and the results of Kamotski and Lebeau~\cite{KL2005}, Lafitte~\cite{Lafitte2008}, 
Wilke~\cite{Milke2017}, Pr\"{u}ss, Simonett and Wilke~\cite{PSW2019}, 
Gebhard, Kolumb\'{a}n and Sz\'{e}kelyhidi~\cite{GBS2021}. 
Here, we are interested in the water-air interface problem, which is the case with a single fluid since the density of air is considered as negligible. 
Specifically, we will consider exactly the case considered by Kull~\cite{Kull1991} 
in his seminal paper on this topic (to which we refer for 
a thorough physical description of the phenomena considered). 

The vertical axis 
being directed upwards, the assumption that 
the water is located above the air 
is equivalent to assuming that the acceleration of gravity 
is non-positive, that is $g<0$. In fact we will consider the 
case $g\le 0$. When $g<0$, for linearized equations, it is 
elementary to see that the problem is microlocally 
ill-posed on the Sobolev spaces. 
For the full model, i.e.\ the incompressible Euler equation with 
free surface, to our knowledge the only ill-posed result 
is due to Sideris~\cite{SiderisJDE14}. For the case of a bounded three-dimensional region surrounded by vacuum, 
he proved that the diameter of the region grow linearly in time. Here we will obtain a similar result for a different shape 
of fluid domain (the equations and the assumptions on the domain that we are making here are exactly those made by Kull~\cite{Kull1991}). 
We will also consider the case where $g<0$. Notice that, for our setting, 
when $g=0$ the Cauchy problem is well-posed 
for initial data with finite regularity (see Agrawal~\cite{Agrawal2022}) and 
when $g\le 0$, the existence of regular solutions 
is known for analytic initial data (see~\cite{Analytic} and the references therein). 

We will prove that, for {\emph {any}} possible solutions, the maximum slope of the free surface must grow like $Ct^{a}$ for some $a>0$, thus justifying the formation of bubbles for the free-surface Rayleigh-Taylor instability, for 
any non-zero initial data.

For the sake of readability we consider the case $g=0$ and $g<0$ in two distinct statements.

\begin{prop}\label{P:RT0}
Assume that $g=0$ and $h\in [1,+\infty]$. 
There exists a constant $C>0$ such that, for all regular solution $(\eta,\psi)$ to the water-wave system defined on the time interval $[0,T]$, there holds
\be\label{RT:0}
\lA \eta(t)\rA_{L^2}(1+\lA \nabla_x\eta(t)\rA_{L^\infty})^\mez\ge \frac{c}{\sqrt{E}} \Big(
E t+\int_{\xT^d}\eta(0,x)\psi(0,x)\dx \Big),
\ee
where, 
$E=\mez \iint_{\Omega(0)}\la \nabla_{x,y}\phi(0,x,y)\ra^2\dydx$.
\end{prop}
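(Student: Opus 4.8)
The plan is to combine the virial identity of Theorem~\ref{T:virial} with a Cauchy--Schwarz estimate and a sharp trace inequality for the velocity potential. Throughout, observe that since $g=0$ the potential energy \e{E2} vanishes, so by conservation of the total energy \e{conservedE} the kinetic energy is constant and equals the quantity $E$ of the statement: $E=E_k(t)=\mez\iint_{\Omega(t)}\la\nabla_{x,y}\phi\ra^2\,\dydx$ for every $t$.

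First I would run the virial identity. Because $g=0$, the term $-\frac g2\int\eta^2$ drops out of \e{MI1}--\e{MI2}, and the bottom contribution $\Boundary=\frac h4\int_{\xT^d}\la\nabla_x\phi(t,x,-h)\ra^2\dx$ is non-negative (and absent when $h=+\infty$). Hence, for every $t$,
\be
\mez\fract\int_{\xT^d}\eta\psi\,\dx=\ME+\Boundary\ge\ME .
\ee
The elementary pointwise bound $\tfrac34(\partial_y\phi)^2+\tfrac14\la\nabla_x\phi\ra^2\ge\tfrac14\big((\partial_y\phi)^2+\la\nabla_x\phi\ra^2\big)$ gives $\ME\ge\mez E_k=\mez E$ (indeed $\ME=\mez E+\mez\iint_{\Omega}(\partial_y\phi)^2\,\dydx$). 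Thus $\fract\int_{\xT^d}\eta\psi\,\dx\ge E$, and integrating in time from $0$ to $t$ yields the linear-in-time lower bound
\be
\int_{\xT^d}\eta(t,x)\psi(t,x)\,\dx\ \ge\ Et+\int_{\xT^d}\eta(0,x)\psi(0,x)\,\dx .
\ee

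It then remains to bound the left-hand side from above. Since $\int_{\xT^d}\eta\,\dx=0$ for a regular solution, I may subtract from $\psi$ its mean value $\bar\psi$ without affecting the pairing, so Cauchy--Schwarz gives $\int_{\xT^d}\eta\psi\,\dx=\int_{\xT^d}\eta(\psi-\bar\psi)\,\dx\le\lA\eta\rA_{L^2}\lA\psi-\bar\psi\rA_{L^2}$. The crucial point is the trace inequality
\be
\lA\psi-\bar\psi\rA_{L^2(\xT^d)}^2\ \les\ (1+\lA\nabla_x\eta\rA_{L^\infty})\iint_{\Omega}\la\nabla_{x,y}\phi\ra^2\,\dydx ,
\ee
with a constant uniform for $h\in[1,+\infty]$. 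Combining this with $\iint_{\Omega}\la\nabla_{x,y}\phi\ra^2\,\dydx=2E$ and with the two previous displays, and rearranging, produces exactly \e{RT:0} (with the constant $c$ the reciprocal of the trace constant).

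The main obstacle is this trace inequality, and specifically obtaining the \emph{linear} dependence on the Lipschitz norm, which after taking a square root gives the optimal exponent $\frac12$ in \e{RT:0}. Since $\iint_{\Omega}\la\nabla_{x,y}\phi\ra^2\,\dydx=\int_{\xT^d}\psi\,G(\eta)\psi\,\dx$, the inequality is equivalent to the coercivity lower bound $\int_{\xT^d}\psi\,G(\eta)\psi\,\dx\gtrsim(1+\lA\nabla_x\eta\rA_{L^\infty})^{-1}\lA\psi-\bar\psi\rA_{L^2}^2$ for the Dirichlet--to--Neumann form, which treats finite and infinite depth uniformly. For a flat interface $G(\eta)$ has Fourier symbol $\la\xi\ra\tanh(h\la\xi\ra)$, which is $\ge\tanh(1)$ on the nonzero frequencies as soon as $h\ge1$ (this is where the hypothesis $h\ge1$ enters, the infinite-depth symbol $\la\xi\ra$ being even more favourable), giving the bound with an absolute constant. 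For a genuine Lipschitz graph the delicate issue is to track the dependence on $\lA\nabla_x\eta\rA_{L^\infty}$ optimally: a crude flattening $(x,z)\mapsto(x,z+\eta(x))$, under which $\nabla_x\phi$ becomes $\nabla_x\Phi-\nabla_x\eta\,\partial_z\Phi$, loses a power and yields only the factor $(1+\lA\nabla_x\eta\rA_{L^\infty}^2)$. Recovering the linear power requires the sharp harmonic trace inequality in Lipschitz domains established separately among the side results of the paper; this is the one genuinely technical ingredient, the rest being a direct consequence of the virial identity.
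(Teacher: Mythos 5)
Your proof is correct and follows essentially the same route as the paper: the virial identity with $g=0$ gives the linear-in-time lower bound on $\int_{\xT^d}\eta\psi\,\dx$, and the upper bound rests on the sharp trace inequality of Proposition~\ref{P:C3} (invoked in the paper through Lemma~\ref{est-psi}), which is exactly where the hypothesis $h\ge 1$ and the optimal exponent $\tfrac12$ on the Lipschitz norm enter. The only cosmetic difference is how the half-derivative is distributed in the duality: you pair $\eta$ with $\psi-\bar\psi$ in $L^2\times L^2$ and use $\lA\psi-\bar\psi\rA_{L^2}\le \lA\psi\rA_{\dot{H}^{1/2}}$ on nonzero integer frequencies, whereas the paper pairs $\dot{H}^{-1/2}\times\dot{H}^{1/2}$ and then uses $\lA\eta\rA_{\dot{H}^{-1/2}}\le\lA\eta\rA_{L^2}$, which is why its remark records the marginally stronger statement with $\lA\eta\rA_{\dot{H}^{-1/2}}$ in place of $\lA\eta\rA_{L^2}$.
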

\begin{rema}
$i)$ The same result holds with $\lA \eta(t)\rA_{L^2}$ replaced by $\lA \eta(t)\rA_{\dot{H}^{-1/2}}$. 

$ii)$ Since $\int\eta(t,x)\dx=0$, it is easily seen that (see~\e{poinc2})
$$
\lA \eta\rA_{L^2}\le C\lA \eta\rA_{L^\infty}\le C'\lA \nabla_x\eta\rA_{L^\infty},
$$
and hence we deduce from \e{RT:0} a lower bound involving only the slope:
$$
(1+\lA \nabla_x\eta(t)\rA_{L^\infty})^{3/2}\ge \frac{c'}{\sqrt{E}} \Big(
E t+\int_{\xT^d}\eta(0,x)\psi(0,x)\dx \Big).
$$
\end{rema}

Eventually, we extend the previous analysis to the case $g< 0$.
\begin{prop}\label{P:RT0bis}
Assume that $g< 0$ and fix $h\in (0,+\infty]$. 
There exists a constant $C>0$ such that, for all regular solution $(\eta,\psi)$ to the water-wave system defined on the time interval $[0,T]$, there holds
\be\label{RT:g}
C\lA \eta(t)\rA_{L^{2}}\left(1+\lA \nabla_x \eta(t)\rA_{L^\infty}\right)^\mez\left(E+ \frac{|g|}{2}\lA \eta(t)\rA_{L^{2}}^2\right)^\mez\ge |E| t+ \int_{\xT^d} \eta(0,x)\psi(0,x)\dx.
\ee
where $E\in \xR$ is the total energy.
\end{prop}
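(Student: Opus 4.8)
The plan is to mirror the proof of Proposition~\ref{P:RT0}, the two genuinely new features being that the total energy $E$ may now be negative and that the kinetic energy $E_k$ is no longer conserved (only $E$ is). The starting point is again the virial identity of Theorem~\ref{T:virial}, but read with the sign $g<0$, combined with a Cauchy--Schwarz/trace bound for $\int_{\xT^d}\eta\psi\dx$.

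First I would produce a \emph{lower} bound for $t\mapsto\int_{\xT^d}\eta\psi\dx$. Writing the potential energy as $E_p(t)=\tfrac g2\|\eta(t)\|_{L^2}^2=-\tfrac{|g|}{2}\|\eta(t)\|_{L^2}^2$, Theorem~\ref{T:virial} becomes $\mez\fract\int_{\xT^d}\eta\psi\dx=\ME(t)+\Boundary(t)+\tfrac{|g|}{2}\|\eta(t)\|_{L^2}^2$, where a direct computation from \e{defi:ME} and \e{E1} gives $\ME\ge\mez E_k\ge0$, and $\Boundary\ge0$ by \e{defi:Boundary}. Energy conservation yields the key algebraic identity $E_k(t)=E-E_p(t)=E+\tfrac{|g|}{2}\|\eta(t)\|_{L^2}^2$ with $E$ constant, which in particular shows $E+\tfrac{|g|}{2}\|\eta(t)\|_{L^2}^2=E_k(t)\ge0$, so the square root in \e{RT:g} is well defined. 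Inserting $\ME\ge\mez E_k$ gives $\mez\fract\int_{\xT^d}\eta\psi\dx\ge\mez E+\tfrac{3|g|}{4}\|\eta\|_{L^2}^2$; distinguishing the case $E\ge0$ (discard the last term) from the case $E<0$ (use $\tfrac{|g|}{2}\|\eta\|_{L^2}^2=E_k-E\ge|E|$), one finds in both regimes $\fract\int_{\xT^d}\eta\psi\dx\ge|E|$. Integrating on $[0,t]$ then gives $\int_{\xT^d}\eta(t,x)\psi(t,x)\dx\ge|E|\,t+\int_{\xT^d}\eta(0,x)\psi(0,x)\dx$.

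The second ingredient is an \emph{upper} bound for the same quantity, exactly the technical input used for $g=0$. Since $\eta$ has zero mean, $G(\eta)$ is positive and invertible on mean-zero functions; writing $\int_{\xT^d}\eta\psi\dx=\int_{\xT^d}\big(G(\eta)^{-1/2}\eta\big)\big(G(\eta)^{1/2}\psi\big)\dx$ and applying Cauchy--Schwarz, together with $\int_{\xT^d}\psi\,G(\eta)\psi\dx=2E_k$, gives $\big|\int_{\xT^d}\eta\psi\dx\big|\le\big(\int_{\xT^d}\eta\,G(\eta)^{-1}\eta\dx\big)^{1/2}(2E_k)^{1/2}$. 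Here I would invoke the trace inequality for harmonic functions in a Lipschitz domain announced in the abstract, in the form $\int_{\xT^d}\eta\,G(\eta)^{-1}\eta\dx\le C(1+\|\nabla_x\eta\|_{L^\infty})\|\eta\|_{L^2}^2$ (the left-hand side being comparable to $\|\eta\|_{\dot H^{-1/2}}^2$, which explains the variant mentioned in the remark following Proposition~\ref{P:RT0}); its sharp dependence on the Lipschitz constant produces exactly the factor $(1+\|\nabla_x\eta\|_{L^\infty})^{1/2}$ in \e{RT:g}. Substituting $2E_k=2\big(E+\tfrac{|g|}{2}\|\eta\|_{L^2}^2\big)$ bounds $\int_{\xT^d}\eta\psi\dx$ from above by $C\|\eta(t)\|_{L^2}(1+\|\nabla_x\eta(t)\|_{L^\infty})^{1/2}\big(E+\tfrac{|g|}{2}\|\eta(t)\|_{L^2}^2\big)^{1/2}$, and chaining this with the lower bound yields \e{RT:g}.

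The main obstacle is the upper-bound step, namely the trace estimate $\int_{\xT^d}\eta\,G(\eta)^{-1}\eta\dx\le C(1+\|\nabla_x\eta\|_{L^\infty})\|\eta\|_{L^2}^2$ with the \emph{optimal} power of the Lipschitz norm; this is the genuinely analytic input, and it is what forces the precise shape of \e{RT:g} rather than a cruder bound. Everything else is bookkeeping. A secondary point requiring care, absent from the $g=0$ case, is the sign of $E$: one must verify the differential inequality $\fract\int_{\xT^d}\eta\psi\dx\ge|E|$ in both sign regimes and check that $E_k=E+\tfrac{|g|}{2}\|\eta\|_{L^2}^2\ge0$. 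Finally, the argument is uniform in the depth, since in the lower bound $\Boundary\ge0$ is simply discarded, and in infinite depth $\Boundary=0$.
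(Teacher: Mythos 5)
Your proof is correct and follows essentially the same route as the paper: a lower bound on $\fract\int\eta\psi\dx$ from the virial identity combined with energy conservation and the same two-case discussion on the sign of $E$, then an upper bound via Cauchy--Schwarz and the sharp coercivity of $G(\eta)$ in the Lipschitz norm. The only cosmetic difference is that you run Cauchy--Schwarz through $G(\eta)^{\pm 1/2}$ and bound $\int\eta\, G(\eta)^{-1}\eta\dx$, whereas the paper uses the $\dot H^{-1/2}$--$\dot H^{1/2}$ duality of Lemma~\ref{est-psi} together with Proposition~\ref{P:C3} and $\lA\eta\rA_{\dot H^{-1/2}}\le\lA\eta\rA_{L^2}$; these are equivalent formulations of the same trace estimate.
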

\begin{rema}
$i)$ 
The total energy is constant in time, but not necessarily 
non-negative when $g<0$. However the quantity 
$E+ \frac{|g|}{2}\lA \eta(t)\rA_{L^{2}}^2$ is non-negative in light of 
the conservation of energy (see \e{RT:4} below); so that the square root is well-defined even for $E<0$.

$ii)$ To illustrate this result, 
assume that $\psi(0,x)=0$. Then the energy is given by $E=-\frac{|g|}{2}\int \eta(0,x)^2\dx<0$ and the previous 
statement implies that
$$
\frac{C^2|g|}{2}\lA \eta(t)\rA_{L^{2}}^4\left(1+\lA \nabla_x \eta(t)\rA_{L^\infty}\right)
\ge |E|^2 t^2.
$$
\end{rema}

\subsection{Plan of the paper}
Our main virial-type identity is established in Section~\ref{S:2}. We then deduce the results about the equipartition 
of energy in Section~\ref{S:3} and the ones about the Rayleigh instability in Section~\ref{S:4}. 
In Section~\ref{S:5} we illustrate the virial identity for the key example of 
standing water-waves. In the Appendix we recall various identities and also, 
in Appendix~\ref{Appendix:C}, we prove several trace inequalities of independent interest which are used to 
control the trace of harmonic functions either on the free surface or at the bottom.

%%%%%%%%%%%%%%%%%%%%%%%%%%%%%%%%%%%%%%%%%%%%%%%%%%%%%%%%%%%%%%
\section{The virial theorem}\label{S:2}

In this section, we prove the virial Theorem~\ref{T:virial} and 
its Corollary~\ref{C:virial}. 

Recall the following notations for the various energies:
\begin{align*}
E_k(t)&=\mez\iint_{\Omega(t)}\la \nabla_{x,y}\phi(t,x,y)\ra^2\dydx,\quad E_p(t) =\frac{g}{2}\int_{\xT^d}\eta(t,x)^2\dx,\\
E&=E_k+E_p,\\
\ME(t)&=\iint_{\Omega(t)} \left(\frac{3}{4}(\partial_y \phi)^2  + \frac{1}{4} \vert \nabla_x \phi\vert^2\right)(t,x,y)\dydx,\\ \Boundary(t)&=\frac{h}{4}\int_{\xT^d} \vert \nabla_x \phi(t, x, -h)\vert^2\dx. 
\end{align*}

\subsection{Proof of the virial theorem in the finite depth case}
Assume that $h<+\infty$. 
We want to prove that, for all 
regular solution $(\eta,\psi)\in C^0([0,T], H^s(\xT^d)\times H^s(\xT^d))$ to the water-wave system  and for all 
time $t\in [0,T)$  there holds
\begin{equation}
\mez \fract
\int_{\xT^d} \eta(t,x)\psi(t,x)\dx = \ME(t)
-E_p(t)+ \Boundary(t).
\end{equation}

Putting for shortness
\begin{equation}\label{mez-3mez0}
(1) = \fract\int_{\xT^d} \eta(t,x)\, \psi(t,x)\dx,
 \end{equation}
we see that, directly from the formulation~\e{systemT} of the water-wave problem, we have
 \begin{align*}
(1)&= \int_{\xT^d}\partial_t \eta(t,x) \psi(t,x)\dx +\int_{\xT^d}\eta(t,x)\, \partial_t \psi(t,x)\dx\\
 &= \int_{\xT^d} (\psi G(\eta)\psi )(t,x)\dx  
  - g  \int_{\xT^d} \eta(t,x)^2\dx  -  \int_{\xT^d} \eta(t,x) N(\eta,\psi)(t,x)\dx.
 \end{align*}
Using the divergence theorem (see~\e{positivityDN}), we obtain at once
$$
\int_{\xT^d} (\psi G(\eta)\psi )(t,x)\dx=
\iint_{\Omega(t)}\big((\partial_y \phi)^2 + \vert \nabla_x \phi\vert^2\big)(t,x,y)\dydx .
$$
It follows that 
\begin{equation}\label{mez-3mez1}
\begin{aligned}
(1) =\iint_{\Omega(t)} \big((\partial_y \phi)^2 + \vert \nabla_x \phi\vert^2\big)(t,x,y)\dydx  &-2 E_p(t)\\
& - \int_{\xT^d} \eta(t,x) N(\eta,\psi)(t,x)\dx.
\end{aligned}
\end{equation}
Let us compute the second integral in the right-hand side. To do so, we use a variant of 
the multiplier method used by Rellich to 
derive its celebrated  identity \footnote{The Rellich inequality shows the equivalence between 
the $L^2$-norm of the tangential derivatives 
and the $L^2$-norm of the normal derivative. It plays a key role 
in the study of boundary value problems in Lipschitz domains (see Ne{\v c}as~\cite[Chapter 5]{Necas}, 
Brown~\cite{Brown1994} or McLean~\cite[Theorem 4.24]{McLean}).} involving the tangential and normal derivatives of an harmonic function. Namely 
the idea is to introduce the integral
$$
(2)=  \iint_{\Omega(t)} \Delta_{x,y}\phi(t,x,y)(y\partial_y \phi)(t,x,y)\dydx.
$$ 
Since $\phi$ is harmonic, we have of course $(2)=0$. 
On the other hand, since $\cnx_x(aF) = a \cnx_xF 
+ \nabla_x a \cdot F$, 
by setting $I:=(\Delta_{x,y} \phi) (y\partial_y \phi)$, we have
\begin{align*}
 &I= \mez y\partial_y\big((\partial_y \phi)^2\big) + \cnx_x \big((y\partial_y \phi)\nabla_x \phi\big) - \mez y \partial_y \big(\vert \nabla_x \phi\vert^2\big),\\
 &= \mez \partial_y \big[y \big((\partial_y \phi)^2 -\vert \nabla_x \phi\vert^2\big)\big] - \mez \big((\partial_y \phi)^2 -\vert \nabla_x \phi\vert^2\big)+  \cnx_x \big((y\partial_y \phi)\nabla_x \phi\big).
\end{align*}
Now introduce the following traces 
$$
B=\partial_y \phi\arrowvert_{y=\eta}, \quad
V =(\nabla_x \phi)\arrowvert_{y=\eta}.$$
Since $\partial_y \phi\arrowvert_{y = -h} = 0$  we have,
\begin{equation}\label{ipp}
\begin{aligned}
  \mez\int_{\xT^d}\int_{-h}^{\eta(x)}   \partial_y \big[y \big((\partial_y \phi)^2 -\vert \nabla_x \phi\vert^2\big)\big]\dx &= \mez \int_{\xT^d} \eta(t,x)\big( B^2- \vert V \vert^2\big)(t,x)\dx\\
  &\quad- \frac{h}{2}\int_{\xT^d} \vert \nabla_x \phi(t, x, -h)\vert^2\dx.
\end{aligned}
\end{equation}
By combining the previous identities, we deduce that
\begin{align*}
  (2)&= \mez \int_{\xT^d} \eta(t,x)\big( B^2- \vert V \vert^2\big)(t,x)\dx - \frac{h}{2}\int_{\xT^d} \vert \nabla_x \phi(t, x, -h)\vert^2\dx\\
  & \quad- \mez \iint_{\Omega(t)} \big((\partial_y \phi)^2 -\vert \nabla_x \phi\vert^2\big)(t,x,y)\dydx + \iint_{\Omega(t)}   \cnx_x \big((y\partial_y \phi)\nabla_x \phi\big)\dydx.
    \end{align*}
On the other hand  it is easily verified that
\begin{align*}
0&=\int_{\xT^d}  \cnx_x \int_{-h}^{\eta(t,x)}((y\partial_y \phi)\nabla_x \phi)(t,x,y)\dydx\\
&=   \iint_{\Omega(t)}   \cnx_x \big((y\partial_y \phi)\nabla_x \phi\big(t,x,y)\big)\dydx \\
&\quad + \int_{\xT^d}(y\partial_y \phi)(\nabla_x\eta\cdot \nabla_x\phi)\arrowvert_{y = \eta(t,x)}\dx
     \end{align*}
which in turn implies that
$$
\iint_{\Omega(t)}   \cnx_x \big((y\partial_y \phi)\nabla_x \phi\big(t,x,y)\big)\dydx = -\int_{\xT^d}  \eta(t,x)\big( B( V\cdot  \nabla_x \eta)\big)(t,x)\dx.
$$
Consequently, we get
\begin{align*}
(2)
&=- \mez \iint_{\Omega(t)} \big((\partial_y \phi)^2 -\vert \nabla_x \phi\vert^2\big)(t,x,y)\dydx  \\
&\quad + \int_{\xT^d} \eta(t,x) \left[ \mez B^2 - \mez \vert V \vert^2 - B( V\cdot  \nabla_x \eta)\right](t,x)\dx\\
&\quad - \frac{h}{2}\int_{\xT^d} \vert \nabla_x \phi(t, x, -h)\vert^2\dx.
  \end{align*}
Now, by definition of $N$ (see~\e{t90}), we have,
$$
N= \mez \vert V \vert^2 -\mez B^2+ B( V\cdot  \nabla_x \eta).
$$
Therefore,
\begin{align*}
(2)
&=  - \mez \iint_{\Omega(t)} \big((\partial_y \phi)^2 -\vert \nabla_x \phi\vert^2\big)(t,x,y)\dydx \\
&\quad -\int_{\xT^d} \eta(t,x)N(t,x)\dx - \frac{h}{2}\int_{\xT^d} \vert \nabla_x \phi(t, x, -h)\vert^2\dx.
\end{align*}
Remembering that $(2) = 0$, we infer that,
\begin{equation}\label{mez-3mez2}
\begin{aligned}
- \int_{\xT^d} \eta(t,x) N(\eta,\psi)(t,x)\dx &=  \mez \iint_{\Omega(t)} \big((\partial_y \phi)^2 -\vert \nabla_x \phi\vert^2\big)(t,x,y)\dydx\\
&\quad+ \frac{h}{2}\int_{\xT^d} \vert \nabla_x \phi(t, x, -h)\vert^2\dx .
\end{aligned}
\end{equation}
By combining \eqref{mez-3mez0}, \eqref{mez-3mez1} and \eqref{mez-3mez2}, we end up with
\begin{equation}\label{mez-3mez3}
\begin{aligned}
\fract \int_{\xT^d} \eta(t,x)\psi(t,x)\dx = \iint_{\Omega(t)} \Big(\frac{3}{2}(\partial_y \phi)^2  &+ \mez \vert \nabla_x \phi\vert^2\Big)(t,x,y)\dydx  -2E_p(t)\\
& + \frac{h}{2}\int_{\xT^d} \vert \nabla_x \phi(t, x, -h)\vert^2\dx,
\end{aligned}
\end{equation}
equivalent to the wanted result.

\subsection{Proof of the virial theorem in the infinite depth case} 
The proof of statement $ii)$ in Theorem~\ref{T:virial} is similar to the above proof. 
However in order to be able to perform the same computations   as in the finite depth case we need (see \eqref{ipp}) to rigorously justify that
\be\label{decay:var2}
\lim_{y\to-\infty}\vert y \vert \int_{\xT^{d}}\la \nabla_{x,y}\phi(t,x,y)\ra^2\dx=0.
\ee
The proof of \e{decay:var2} is the 
subject of this subsection.

In \eqref{decay:var2} the variable $t$ is fixed, so we will skip it in what follows.

Recall that 
$\eta$ is bounded by hypothesis. 
We can assume 
without loss of generality that 
$\inf_{x\in\xT^d}\eta(t,x)>-1$. 
In particular the hyperplane 
of equation $\{y=-1\}$ 
is contained in the domain $\Omega$. 
By a uniqueness result, 
this will allow us to say 
that $\phi$ 
coincides in the half-space $\{y<-1\}$ with 
the harmonic extension (denoted by $\theta$) of 
its trace on $\{y=-1\}$. 
The idea is that 
$\theta$ can be studied thanks to 
the 
Fourier transform (since the 
problem defining $\theta$ is translation invariant in $x$) 
and will see that 
the desired result follows easily from Plancherel's theorem. 

Let us now proceed 
to the details. Let $\theta$ 
be the variational solution of the problem
\begin{equation}\label{eq-theta}
\Delta_{x,y} \theta = 0  \quad \text{ for } -\infty<y<-1, 
\quad \theta\arrowvert_{y=-1} = \phi(x,-1).
\end{equation}

It satisfies
\begin{equation}\label{varia}
\int_{-\infty}^{-1} \int_{\xT^d} \vert \nabla_{x,y} \theta(x,y)\vert^2\dx\dy <+\infty.
\end{equation}
By virtue of the Plancherel and Fubini theorems, we deduce that
$$
\sum_{\xi \in \xZ^d} \int_{-\infty}^{-1} \vert \xi \vert^2 \vert \widehat{\theta}(\xi,y)\vert^2\dy <+\infty.
$$
Therefore, for all  $\xi \in \xZ^d$ with  $\vert \xi \vert \geq 1$, 
we have
 \begin{equation}\label{fini}
  \int_{-\infty}^{-1} \vert \xi \vert^2 \vert \widehat{\theta}(\xi,y)\vert^2\dy <+\infty.
  \end{equation}
Performing a  Fourier transform in the    $x$ variable we get from    \eqref{eq-theta},  
$$(\partial_y^2 - \vert \xi \vert^2)\widehat{\theta} = 0 \text{ in } -\infty<y<-1, \quad \widehat{\theta}(\xi, -1) = \widehat{\phi}(\xi, -1).$$
Then,
\begin{equation}\label{hat-theta}
\begin{aligned}
  \widehat{\theta}(\xi,y) &= C_1(\xi)e^{y \vert \xi\vert} +C_2(\xi)e^{-y \vert \xi\vert} ,\\
   \widehat{\phi}(\xi, -1)&= C_1(\xi)e^{- \vert \xi\vert} +C_2(\xi)e^{  \vert \xi\vert}.
 \end{aligned}
 \end{equation}
It is easily verified that 
the 
condition \eqref{fini} implies that $C_2(\xi) = 0$ for all $\xi \neq 0$, which implies that
%Indeed let  $\xi_0 \neq 0$ be such that  $C_2(\xi_0) \neq0.$ We have,
%\begin{align*}
%    \vert %\widehat{\theta}(\xi_0,y)\vert^2  &=  %  \big( C_1(\xi_0)e^{y \vert \xi_0\vert} +C_2(\xi_0)e^{-y \vert \xi_0\vert} \big)^2 , \\
%  &=    e^{-2y \vert \xi_0\vert}\big( %C_2(\xi_0)  +C_1(\xi_0)e^{2y \vert %\xi_0\vert} \big)^2.
%\end{align*}
%Changing  $C_2(\xi_0)$  to %$-C_2(\xi_0) $  inside the square we %may assume that $C_2(\xi_0)>0.$ 
%On the other hand we have, $\lim_{y %\to -\infty}\vert C_1(\xi_0)\vert %e^{2y \vert \xi_0\vert} =0,$ 
%so there exists   $y_0>0$ such that for $y<-y_0$ we have, $\vert C_1(\xi_0)\vert e^{2y \vert \xi_0\vert} \leq \mez C_2(\xi_0).$ 
%Therefore for $y<-y_0$ we have,
%$$ C_2(\xi_0)  +C_1(\xi_0)e^{2y \vert %\xi_0\vert}   \geq \mez C_2(\xi_0) %>0,$$
%so,
%$$ e^{-2y \vert %\xi_0\vert}\big(C_2(\xi_0)  
%+C_1(\xi_0)e^{2y \vert \xi_0\vert} %\big)^2 \geq \frac{1}{4} e^{-2y \vert %\xi_0\vert} C_2(\xi_0)^2.$$
%Then,
%$$\int_{-\infty}^{-y_0} e^{-2 y \vert %\xi_0 \vert} \big(C_2(\xi_0)  %+C_1(\xi_0)e^{2y \vert \xi_0\vert} %\big)^2 \dy = +\infty,$$
%which contradicts \eqref{fini}. So %$C_2(\xi) = 0$ for all $\xi \neq 0$ which implies that $\widehat{\theta}(\xi, y) = C_1(\xi) e^{y\vert \xi \vert}$ and from  \eqref{hat-theta},
$$\widehat{\theta}(\xi, y) =  e^{(1-\vert y\vert) \vert \xi \vert}\widehat{\phi}(\xi, -1), \quad \xi \neq 0,$$
which implies that  for $y<0,$
$$
\partial_y \widehat{\theta}(\xi, y) = \vert \xi \vert \widehat{\theta}(\xi, y).
$$
For $\vert y \vert \geq  3 $ we have $1-\vert y\vert \leq  - \mez \vert y\vert -\mez$ 
so that,
$$\vert \widehat{\theta}(\xi, y)\vert \leq e^{-\mez \vert \xi \vert} 
e^{- \mez \vert y \vert \vert\xi \vert}\vert \widehat{\phi}(\xi, -1)\vert \leq  e^{-\mez \vert \xi \vert}e^{- \mez \vert y  }\vert \widehat{\phi}(\xi, -1)\vert,
$$
since $\xi \in \xZ^d, \xi \neq 0$ imply $\vert \xi \vert \geq 1.$ Therefore,
$$
\int_{\xT^d} \vert \nabla_{x,y} \theta(x,y)\vert^2\dx \leq C_d \sum_{\xi\in \xZ^d \setminus 0 }\vert \xi\vert^2 \vert \widehat{\theta}(\xi,y)\vert^2 \leq C_d e ^{-\vert y \vert} \sum_{\xi\in \xZ^d \setminus 0 }\vert \xi\vert^2 e^{-\vert \xi \vert}\vert \widehat{\phi}(\xi, -1)\vert^2.$$
On the other hand, since $\phi$  is a   $C^\infty$ fonction in $\{(x,y): x \in \xT^d, -\infty<y<-\mez\}$ we have,
$\vert \widehat{\phi}(\xi, -1)\vert \leq C\Vert \phi(\cdot, -1)\Vert_{L^1(\xT^d)}$ so that eventually,
$$ \int_{\xT^d} \vert \nabla_{x,y} \theta(x,y)\vert^2\dx  \leq C'_d e ^{-\vert y \vert} \Vert \phi(\cdot, -1)\Vert^2_{L^1(\xT^d)}.$$
Then \eqref{decay:var2} follows from the fact that $\theta = \phi$ for $y<-1.$

\section{Equipartition of energy}\label{S:3}

In this section we prove Corollary~\ref{C:virial}.

\subsection{Progressive waves}
Statement~$\ref{C:viriali})$ in Corollary~\ref{C:virial} is obvious.

\subsection{Periodic solutions}
The proof of statement~$\ref{C:virialii})$ in Corollary~\ref{C:virial} relies on the 
following straightforward 
result.
 
\begin{lemm}\label{Etilde1}
Consider a regular solution $(\eta, \psi)$ such that,
%une solution such that, \textcolor{red}{Le condition de moyenne nulle est inclue dans la d\'efinition de 'regular solution', voir l'introduction.}
\begin{enumerate}
%& (i)\quad \int_{\xT^d} \eta_0(x)\dx = 0,
\item\label{p:(ii)bis} $t\mapsto \eta(t,x)$  is T-periodic,
\item\label{p:(iii)bis} $t\mapsto \nabla_{x,y} \phi(t,x,y)$ is T-periodic,
\end{enumerate}
where recall that $\phi$ is the harmonic extension of $\psi$. Then the function, 
$$
t \mapsto \theta(t) =  \int_{\xT^d} \eta(t,x) \psi(t,x)\dx  \text{ is } T\text{-periodic}.
$$
 \end{lemm}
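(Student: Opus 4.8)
The plan is to exploit the fact that, although $\nabla_{x,y}\phi$ is assumed $T$-periodic, the potential $\phi$ itself—and hence its trace $\psi=\phi\arrowvert_{y=\eta}$—need only be periodic \emph{up to an additive function of time}, and then to use the zero-mean normalization of $\eta$ to absorb this discrepancy. This is the conceptual crux: periodicity of the gradient does not force periodicity of $\psi$.

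First I would observe that, since $t\mapsto\eta(t,x)$ is $T$-periodic, the fluid domain satisfies $\Omega(t+T)=\Omega(t)$. The hypothesis $\nabla_{x,y}\phi(t+T,x,y)=\nabla_{x,y}\phi(t,x,y)$ then says that, at each fixed $t$, the function $(x,y)\mapsto \phi(t+T,x,y)-\phi(t,x,y)$ has vanishing gradient on $\Omega(t)$. As $\Omega(t)$ is connected (each vertical fibre $\{-h<y<\eta(t,x)\}$ is an interval and the base $\xT^d$ is connected), this difference is constant in $(x,y)$; I would denote it $c(t)$, so that
$$
\phi(t+T,x,y)=\phi(t,x,y)+c(t).
$$

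Next I would evaluate this identity at the free surface. Using once more that $\eta(t+T,x)=\eta(t,x)$, this gives
$$
\psi(t+T,x)=\phi(t+T,x,\eta(t+T,x))=\phi(t+T,x,\eta(t,x))=\psi(t,x)+c(t),
$$
so $\psi$ is indeed $T$-periodic only modulo the additive constant $c(t)$. Inserting these two relations into the definition of $\theta$ yields
$$
\theta(t+T)=\int_{\xT^d}\eta(t+T,x)\psi(t+T,x)\dx=\int_{\xT^d}\eta(t,x)\big(\psi(t,x)+c(t)\big)\dx=\theta(t)+c(t)\int_{\xT^d}\eta(t,x)\dx.
$$

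Finally I would invoke that $(\eta,\psi)$ is a regular solution, so the zero-mean condition $\int_{\xT^d}\eta(t,x)\dx=0$ from Definition~\ref{defi:regular} makes the last term vanish, giving $\theta(t+T)=\theta(t)$. The main obstacle is really just the first step—recognizing that the natural candidate $\psi(t+T,x)=\psi(t,x)$ \emph{fails}, and that the argument must instead track the time-dependent constant $c(t)$; once this is identified, the zero-mean normalization of $\eta$ does all the work, and the remaining manipulations (evaluating traces, integrating over $\xT^d$) are fully justified by the regularity assumed of the solution.
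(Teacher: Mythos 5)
Your proof is correct and follows essentially the same route as the paper: both arguments observe that $T$-periodicity of $\nabla_{x,y}\phi$ on the (connected, $T$-periodic) domain forces $\phi(t+T,\cdot)-\phi(t,\cdot)$ to be a constant $c(t)$ in space, hence $\psi(t+T,\cdot)=\psi(t,\cdot)+c(t)$, and then the zero-mean condition $\int_{\xT^d}\eta\dx=0$ kills the extra term in $\theta(t+T)$. The only cosmetic difference is that the paper routes the last step through the mean-zero part $\widetilde{\psi}=\psi-\beta(t)$ of $\psi$, whereas you cancel the constant directly.
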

 \begin{proof}
By assumption,
$$
\nabla_{x,y}( \phi(t, x,y) - \phi(t+ T,  x,y))= 0, \quad \forall x,y\in \xT^d
$$
and hence, there is a function $\alpha$ depending only on time such that,
$$
\phi(t, x,y) - \phi(t+ T,  x,y) = \alpha(t).
$$
Now, using assumption $\ref{p:(ii)bis})$, we see that,
\begin{align*}
 \psi(t + T,x) &=   \phi(t+ T, x,\eta(t+ T,x)) = \phi(t+ T, x,\eta(t,x)),\\
 &=  \phi(t, x,\eta(t,x)) -\alpha(t) = \psi(t,x)-\alpha(t).
 \end{align*}
Set,
$$
\beta(t)= \frac{1}{\vert \xT^d\vert} \int_{\xT^d}\psi(t,x)\dx,\quad 
\widetilde{\psi}(t,x)\dx = \psi(t,x) -  \beta(t).
$$
We have, 
$$\beta(t+T) =  \frac{1}{\vert \xT^d\vert} \int_{\xT^d}\psi(t+T,x)\dx =  \frac{1}{\vert \xT^d\vert} \int_{\xT^d}\psi(t,x)\dx -\alpha(t) = \beta(t)-\alpha(t).$$
We then write,
\begin{align*}
 \widetilde{\psi}(t+T,x) &= \psi(t+T,x) - \beta(t+T) = \psi(t,x) -\alpha(t) - (\beta(t)-\alpha(t))= \widetilde{\psi}(t,x).
\end{align*}
Now, as already seen in the proof of Lemma~\ref{est-psi}, using the fact that $\int_{\xT^d}\eta(t,x)\dx=0$ (by assumption on regular solutions, see Definition~\ref{defi:regular}), we have,
$$
\theta(t) =  \int_{\xT^d} \eta(t,x) \widetilde{\psi}(t,x)\dx + \beta(t) \int_{\xT^d} \eta(t,x)\dx = \int_{\xT^d} \eta(t,x) \widetilde{\psi}(t,x)\dx.
$$
This proves that $\theta$ is $T$-periodic, which completes the proof. 
\end{proof}
\begin{coro}\label{Etilde2}
Consider a regular solution $(\eta, \psi)$ such that,
\begin{enumerate}
\item\label{p:(ii)} $t\mapsto \eta(t,x)$  is T-periodic,
\item\label{p:(iii)} $t\mapsto \nabla_{x,y} \phi(t,x,y)$ is T-periodic.
\end{enumerate}
Then,
\begin{multline*}
  \int_0^{T}\iint_{\Omega(t)} \Big(\frac{3}{4}(\partial_y \phi)^2 + \frac{1}{4} \vert \nabla_x \phi\vert^2\Big)(t,x,y)\dydx\dt  -\int_0^{T}E_p(t)\dt\\
  + \frac{h}{4}\int_0^{T}\int_{\xT^d} \vert \nabla_x \phi(t, x, -h)\vert^2\dx\dt  =0.
\end{multline*}
If in addition $h=+\infty$, then this simplifies to
$$
\int_0^{T}\iint_{\Omega(t)} \Big(\frac{3}{4}(\partial_y \phi)^2  + \frac{1}{4} \vert \nabla_x \phi\vert^2\Big)(t,x,y)\dydx\dt  = \int_0^{T}E_p(t)\dt.
$$
\end{coro}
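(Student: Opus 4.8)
The plan is to recognize that the claimed identity is nothing more than the time-integral of the virial identity of Theorem~\ref{T:virial}, combined with the periodicity furnished by Lemma~\ref{Etilde1}. So the proof will be immediate once those two preceding results are invoked.

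First I would set $\theta(t) = \int_{\xT^d}\eta(t,x)\psi(t,x)\dx$. In the finite depth case, identity~\eqref{MI1} of Theorem~\ref{T:virial} reads precisely
$$
\mez \fract \theta(t) = \ME(t) - E_p(t) + \Boundary(t)
$$
for every $t \in [0,T)$, with $\ME$ and $\Boundary$ as recalled at the start of the section. Integrating this relation over $[0,T]$ and using the fundamental theorem of calculus gives
$$
\int_0^T \big(\ME(t) - E_p(t) + \Boundary(t)\big)\dt = \mez \int_0^T \fract\theta(t)\dt = \mez\big(\theta(T) - \theta(0)\big).
$$

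The decisive step is then to observe that $\theta$ is $T$-periodic, which is exactly the conclusion of Lemma~\ref{Etilde1} under the two periodicity hypotheses on $\eta$ and $\nabla_{x,y}\phi$. Hence $\theta(T) = \theta(0)$, the right-hand side above vanishes, and we obtain the first displayed identity of the corollary. For the infinite depth case $h=+\infty$, I would run the same argument starting instead from part~$ii)$ of Theorem~\ref{T:virial} (identity~\eqref{MI2}), in which $\Boundary$ is replaced by $0$; this yields $\int_0^T \ME(t)\dt = \int_0^T E_p(t)\dt$.

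I do not expect a genuine obstacle within the corollary itself, since it is a direct assembly of the virial identity and the periodicity of $\theta$. The only delicate point — already settled by Lemma~\ref{Etilde1} — is that periodicity of $\nabla_{x,y}\phi$ does not by itself give periodicity of the trace $\psi=\phi\arrowvert_{y=\eta}$: the potentials at times $t$ and $t+T$ may differ by a time-dependent constant $\alpha(t)$, so that $\psi$ is only periodic up to $\alpha(t)$. That lemma shows the constant is harmless for $\theta$, precisely because $\int_{\xT^d}\eta\,\dx = 0$ for regular solutions; this is where the zero-mean hypothesis in Definition~\ref{defi:regular} enters.
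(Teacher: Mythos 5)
Your proposal is correct and follows exactly the paper's own argument: integrate the virial identity of Theorem~\ref{T:virial} over $[0,T]$ and use Lemma~\ref{Etilde1} to conclude that $\int_0^T\frac{\diff}{\dt}\int_{\xT^d}\eta\psi\dx\dt=0$. Your closing remark on why periodicity of $\nabla_{x,y}\phi$ alone suffices (via the zero-mean condition on $\eta$) correctly identifies the only subtle point, which the paper likewise delegates to that lemma.
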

\begin{proof}
The previous lemma implies that,
$$
\int_0^{T}\fract
\int_{\xT^d} \eta(t,x)\psi(t,x)\dx\dt=0.
$$
Hence the Corollary follows immediately from Theorem~\ref{T:virial}.
\end{proof}
%\begin{rema}\label{nonper}
To conclude this part, we shall prove an identity which implies that, in finite depth, there are no regular solutions $(\eta,\psi)$ which 
are periodic in time (this does not contradict the previous result which assumes only that $\nabla_{x,y}\phi$ is periodic in time).
% \end{rema}
\begin{prop}\label{dt-int-psi}
Assume that $h<+\infty$ and consider a regular solution $(\eta,\psi)$ to the water-wave system defined on the time interval $[0,T]$. For all $t \in (0,T),$ 
\begin{equation}\label{eq-dt-int-psi}
\fract\int_{\xT^d} \psi(t,x)\dx = -\mez \int_{\xT^d} \vert \nabla_x  \phi(t,x, -h)\vert^2 \dx.
\end{equation}
In particular, the only regular 
solution  $(\eta,\psi)$ which is periodic in time is of the form $(\eta,\psi)=(0,K)$ for some constant $K$.
\end{prop}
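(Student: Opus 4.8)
The plan is to integrate the Bernoulli equation over $\xT^d$ and to identify the spatial average of the nonlinearity $N$ with the boundary term at the bottom. From the second equation in \e{systemT} and the normalization $\int_{\xT^d}\eta\dx=0$ valid for regular solutions, one gets immediately
$$
\fract\int_{\xT^d}\psi\dx=\int_{\xT^d}\partial_t\psi\dx=-g\int_{\xT^d}\eta\dx-\int_{\xT^d}N(\eta,\psi)\dx=-\int_{\xT^d}N(\eta,\psi)\dx,
$$
so the whole identity reduces to proving that $\int_{\xT^d}N\dx=\mez\int_{\xT^d}|\nabla_x\phi(t,x,-h)|^2\dx$.

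To compute this average I would reuse the multiplier method from the proof of Theorem~\ref{T:virial}, but with the simpler multiplier $\partial_y\phi$ instead of $y\,\partial_y\phi$. Writing $B=\partial_y\phi\eval$ and $V=(\nabla_x\phi)\eval$, so that $N=\mez|V|^2-\mez B^2+B\,(V\cdot\nabla_x\eta)$, I use that $\phi$ is harmonic to get $\iint_{\Omega(t)}(\Delta_{x,y}\phi)(\partial_y\phi)\dydx=0$, together with the pointwise identity
$$
(\Delta_{x,y}\phi)(\partial_y\phi)=\cnx_x\big((\partial_y\phi)\nabla_x\phi\big)+\mez\partial_y\big[(\partial_y\phi)^2-|\nabla_x\phi|^2\big].
$$
Integrating over $\Omega(t)$, the vertical term gives (using $\partial_y\phi\arrowvert_{y=-h}=0$) the boundary contribution $\mez\int_{\xT^d}\big(B^2-|V|^2+|\nabla_x\phi(\cdot,-h)|^2\big)\dx$, while the horizontal divergence term gives $-\int_{\xT^d}B\,(V\cdot\nabla_x\eta)\dx$, exactly as in the computation around \e{ipp}. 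Setting the sum to zero yields $\int_{\xT^d}B\,(V\cdot\nabla_x\eta)\dx=\mez\int_{\xT^d}(B^2-|V|^2+|\nabla_x\phi(\cdot,-h)|^2)\dx$, and substituting this into $N$ the terms in $B^2$ and $|V|^2$ cancel, leaving precisely $\int_{\xT^d}N\dx=\mez\int_{\xT^d}|\nabla_x\phi(\cdot,-h)|^2\dx$. This proves \e{eq-dt-int-psi}.

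For the rigidity statement, suppose $(\eta,\psi)$ is periodic in time with period $T$. Then $t\mapsto\int_{\xT^d}\psi\dx$ is periodic, so integrating \e{eq-dt-int-psi} over $[0,T]$ gives $\int_0^T\int_{\xT^d}|\nabla_x\phi(t,x,-h)|^2\dx\dt=0$. As the integrand is nonnegative, $\nabla_x\phi(t,\cdot,-h)=0$ for every $t$; together with the Neumann condition $\partial_y\phi(t,\cdot,-h)=0$, the full gradient $\nabla_{x,y}\phi$ vanishes on the flat bottom. I would then conclude that $\phi$ is independent of $x$: since $\inf_x\eta>-h/2$ there is a flat sublayer $\{-h<y<c\}\subset\Omega(t)$, and taking the Fourier transform in $x$ there exactly as in the proof of \e{decay:var2}, the two relations $\partial_y\widehat\phi(\xi,-h)=0$ and $\xi\,\widehat\phi(\xi,-h)=0$ force $\widehat\phi(\xi,\cdot)\equiv0$ for all $\xi\neq0$; unique continuation (analyticity of harmonic functions) then propagates this to all of $\Omega(t)$, so $\phi=\beta(t)$ is constant in space. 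Consequently $\psi=\beta(t)$ is constant in $x$, whence $B=V=0$ and $N=0$; the kinematic equation gives $\partial_t\eta=G(\eta)\psi=0$ and Bernoulli gives $\beta'(t)+g\eta(x)=0$, which (since $g\neq0$ and $\int_{\xT^d}\eta\dx=0$) forces $\eta\equiv0$ and then $\psi\equiv K$ for some constant $K$.

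The routine multiplier computation is a direct variant of what is already done for Theorem~\ref{T:virial}, so the only genuinely delicate point is the last step: passing from the vanishing of $\nabla_x\phi$ on the bottom to the global statement that $\phi$ is $x$-independent. This is where the flat-bottom geometry is essential, since it allows the clean Fourier description of $\phi$ near $\Gamma$ and the attendant unique continuation argument.
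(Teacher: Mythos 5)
Your proof is correct and takes essentially the same route as the paper: the reduction to $\int_{\xT^d}N\dx=\mez\int_{\xT^d}|\nabla_x\phi(\cdot,-h)|^2\dx$ via the second equation of \e{systemT}, followed by the Rellich-type computation with multiplier $\partial_y\phi$, reproduces exactly the paper's Lemma~\ref{intN}. The only difference is cosmetic: for the rigidity step the paper deduces $\phi(t,\cdot,\cdot)\equiv\alpha(t)$ from the Cauchy data $\phi=\alpha(t)$, $\partial_y\phi=0$ on the flat bottom by invoking Holmgren's uniqueness theorem plus analyticity, whereas you carry out the same unique continuation explicitly by Fourier series in a flat sublayer; you are, if anything, more careful than the paper in flagging that $g\neq 0$ is needed in the very last step to force $\eta\equiv 0$.
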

\begin{proof}
Let us first show how \e{eq-dt-int-psi} implies that periodic in time solutions are of the form $(\eta,\psi)=(0,K)$ for some constant $K$. 
If $t\mapsto \psi(t,x)$ is  $T$-periodic then, integrating the equality \eqref{eq-dt-int-psi} between $0$ and $T,$ we conclude that $\nabla_x  \phi(t,x, -h)= 0$ for all $(t,x) \in (0,T)\times \xT^d.$ This implies that $\phi(t,x,-h) = \alpha(t)$ for all $(t,x) \in (0,T)\times \xT^d.$ On the other hand we know that $\phi$ satisfies,
$$\Delta_{x,y} \phi = 0 \text{ in } \, \Omega(t)\quad\text{and}\quad
\partial_y\phi(t,x,-h)=0.$$
 By Holmgren's uniqueness theorem (for fixed $t$) and the analyticity of $\phi$ in $\Omega(t)$ we see that $\phi(t,x,y) = \alpha(t)$ for all  $(x,y)\in \Omega(t).$  So $\psi(t,x) = \phi(t,x,\eta(t,x)) = \alpha(t).$ It follows that $G(\eta)\psi(t,x) = \nabla_x \psi(t,x)=0$ for all $x$ in $\xT^d.$ By equations \eqref{systemT} we deduce easily that $(\eta, \psi) = (0,K)$ where $K$ is a real constant.

We now have to prove \e{eq-dt-int-psi}. 
The proof relies on the following result.
\begin{lemm}\label{intN}
Let $N$ be as defined  in \eqref{t90}. Then, for all $t \in (0,T)$,
$$\int_{\xT^d} N(\eta,\psi)(t,x)\dx = \mez\int_{\xT^d} \vert \nabla_x \phi(t,x,-h)\vert^2\dx.$$
\end{lemm}
\begin{rema}
This identity 
is a version of the classical Rellich identity (see~\cite{A-stab-AnnalsPDE,Agrawal-A}). 
More precisely, the only difference with the classical Rellich identity comes from the boundary term evaluated at the bottom, and this lemma states that this boundary term has a sign.
\end{rema}
\begin{proof}
Here the time variable plays the role of a parameter and we omit it. 
We use again the notations 
$B=\partial_y \phi\arrowvert_{y=\eta}$ and $V =(\nabla_x \phi)\arrowvert_{y=\eta}$. 
Then, by definition of $N$ (see~\e{t90}), we have
$N= \mez \vert V \vert^2 -\mez B^2+ B( V\cdot  \nabla_x \eta)$.

The basic idea to prove a Rellich identity is to multiply the equation $\Delta_{x,y}\phi=0$ by $L\phi$ for some vector field~$L$  transverse to the boundary. 
Here, choosing $L=\partial_y$, 
we write
\begin{equation}\label{int-N0}
\begin{aligned}
 &\iint_\Omega \Delta_{x,y} \phi(x,y) \partial_y \phi(x,y)\dydx=  (1) + (2) =0 \quad\text{with,}\\
 &(1) =\iint_\Omega \partial^2_y \phi(x,y) \partial_y \phi(x,y)\dydx, \quad 
(2) =  \sum_{j=1}^d   \iint_\Omega\partial^2_{x_j}  \phi(x,y) \partial_y \phi(x,y)\dydx.
 \end{aligned}
 \end{equation}
Observe that, since $\partial_y \phi\arrowvert_{y=-h} = 0$, we have
\begin{equation}\label{int-N1}
(1) = \mez  \iint_\Omega \partial_y\big( (\partial_y \phi)^2\big)(x,y)\dydx 
= \mez \int_{\xT^d}  (\partial_y \phi)^2\arrowvert_{y = \eta(x)}\dx.
\end{equation}

We then write,
\begin{equation*} 
\begin{aligned}
   0 & =\int_{\xT^d} \partial_{x_j}\int_{-h}^{\eta(x)} \partial_{x_j}\phi(x,y)\partial_{y}\phi(x,y)\dydx = A_1+ A_2+ A_3,\\
   A_1 &= \int_{\xT^d}\partial_{x_j}\eta(x) 
   (\partial_{x_j}\phi)\arrowvert_{y = \eta(x)}(\partial_{y}\phi)\arrowvert_{y = \eta(x)}\dx =  \int_{\xT^d}\partial_{x_j}\eta(x)V_j(x)B(x)\dx,\\
   A_2 &=  \iint_\Omega\partial^2_{x_j}  \phi(x,y) \partial_y \phi(x,y)\dydx,\\
   A_3&=  \iint_\Omega\partial_{x_j}  \phi(x,y) \partial_{x_j}\partial_y \phi(x,y)\dydx 
   = \mez \iint_\Omega \partial_y \big((\partial_{x_j}  \phi(x,y))^2\big)\dydx.
  \end{aligned}
  \end{equation*}
We deduce that,
  \begin{equation}\label{int-N2}
   (2) = -\int_{\xT^d} B(x) (V(x)\cdot\nabla_x \eta(x))\dx -\mez \iint_\Omega \partial_y \big(\vert \nabla_x  \phi(x,y)\vert^2\big)\dydx.
   \end{equation}
By using \eqref{int-N0},   \eqref{int-N1} and  \eqref{int-N2}, it follows that,
$$
0=  \mez \int_{\xT^d} \Big(\int_{-h}^{\eta(x)} \partial_y\big[ (\partial_y \phi)^2 (x,y) -\vert \nabla_x  \phi(x,y)\vert^2\big] \dy\Big)\dx  -\int_{\xT^d}  B  (V\cdot\nabla_x \eta) \dx.$$
Using again 
$\partial_y\phi(x, -h) = 0$  we infer that, 
$$ \int_{\xT^d} \big[\big(\mez B^2- \mez\vert V\vert^2 - B (V\cdot\nabla_x \eta\big)(x) + \mez\vert \nabla_x  \phi(x, -h)\vert^2\big]\dx =0.
$$
As a consequence, the wanted result follows from the identity \eqref{N=} for $N$.
\end{proof}
The previous lemma immediately implies that,  
$$
\fract\int_{\xT^d} \psi(t,x)\dx =     \int_{\xT^d} \partial_t \psi(t,x)\dx = - g\int_{\xT^d}\eta(t,x)\dx - \int_{\xT^d} N(\eta,\psi)(t,x)\dx.
$$
The Proposition now follows from Proposition~\ref{eta1} and Lemma~\ref{intN}, since a regular solution satisfies $\int_{\xT^s} \eta(t,x) \dx =0.$
\end{proof}

\subsection{The virial estimate}
It remains to prove statement~$\ref{C:virialiii})$ in Corollary~\ref{C:virial}. 

Consider a regular solution $(\eta, \psi)$ defined on the time interval $[0,T]$. 
By integrating the virial identity in time, we have
$$
\langle \ME + \Boundary- E_p\rangle_T=\frac{1}{T}\left(
\int_{\xT^d} \eta(T,x)\psi(T,x)\dx-\int_{\xT^d} \eta(0,x)\psi(0,x)\dx\right),
$$
and hence
$$
\la \langle \ME + \Boundary- E_p\rangle_T\ra
\le \frac{2}{T}\sup_{t\in[0,T]}
\la \int_{\xT^d} \eta(t,x)\psi(t,x)\dx\ra.
$$
So, to prove statement~$\ref{C:virialiii})$, it will suffice to bound 
$\int \eta\psi\dx$. To do so, we will use the following

\begin{lemm}\label{est-psi}
There exists a constant $C>0$ such that, for all $h\ge 1$, for all 
$\sigma\in W^{1,\infty}(\xT^d)$ satisfying $\int_{\xT^d}\sigma(x)\dx=0$  and for all  $f\in H^\mez(\xT^d)$  there holds,
$$
\la \int_{\xT^d}\sigma(x)f(x)\dx\ra\le C
\lA \sigma\rA_{\dot{H}^{-\mez}}\left(1+\lA \nabla_x \sigma\rA_{L^\infty}\right)^\mez
\left(\int f G(\sigma)f\dx\right)^\mez,
$$
where the $\lA \cdot\rA_{\dot{H}^{-1/2}}$-norm is as defined by~\e{defi:homogeneous}.
\end{lemm}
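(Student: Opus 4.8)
The plan is to peel off the two factors by duality and reduce everything to a single trace inequality. Since $\int_{\xT^d}\sigma\,\dx=0$, replacing $f$ by $f$ minus its mean changes neither $\int_{\xT^d}\sigma f\,\dx$ nor the homogeneous norm $\lA f\rA_{\dot{H}^{1/2}}$, so I would assume $f$ has mean zero. Expanding both functions in Fourier series and applying Cauchy--Schwarz with the conjugate weights $\la\xi\ra^{-1/2}$ and $\la\xi\ra^{1/2}$ yields immediately
$$
\la \int_{\xT^d}\sigma(x) f(x)\,\dx\ra\le \lA\sigma\rA_{\dot{H}^{-1/2}}\,\lA f\rA_{\dot{H}^{1/2}}.
$$
Comparing with the target inequality, it then suffices to prove the trace estimate
\begin{equation}\label{plan:trace}
\lA f\rA_{\dot{H}^{1/2}}^2\le C\,\big(1+\lA\nabla_x\sigma\rA_{L^\infty}\big)\int_{\xT^d} f\,G(\sigma)f\,\dx,
\end{equation}
with $C$ independent of $h\ge 1$: inserting \e{plan:trace} into the previous line produces exactly the factor $(1+\lA\nabla_x\sigma\rA_{L^\infty})^{1/2}$ appearing in the statement.

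\textbf{Step 2 (the right-hand side is a Dirichlet energy).} Let $\Phi$ be the harmonic extension of $f$, that is the solution of $\Delta_{x,y}\Phi=0$ in $\Omega_\sigma=\{(x,y):-h<y<\sigma(x)\}$ with $\Phi\arrowvert_{y=\sigma}=f$ and $\partial_y\Phi\arrowvert_{y=-h}=0$. By the divergence theorem (exactly as in~\e{positivityDN}), one has
$$
\int_{\xT^d} f\,G(\sigma)f\,\dx=\iint_{\Omega_\sigma}\la\nabla_{x,y}\Phi\ra^2\,\dydx ,
$$
so that \e{plan:trace} is genuinely a trace inequality: it bounds the $\dot{H}^{1/2}$-norm of the boundary value $f$ by the Dirichlet energy of its harmonic extension, with the sharp weight in the Lipschitz norm of the graph. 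The hypothesis $h\ge 1$ serves only to keep the constant uniform, since on mean-zero functions the lowest frequency is $\la\xi\ra=1$ and $\tanh(h\la\xi\ra)\ge\tanh(1)$ keeps the homogeneous norms and the flat Dirichlet--Neumann operator comparable for all such $h$.

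\textbf{Step 3 (the trace inequality, and the main obstacle).} To establish \e{plan:trace} I would flatten $\Omega_\sigma$ onto the straight strip $\xT^d\times(-h,0)$ through a diffeomorphism $(x,z)\mapsto(x,\Theta(x,z))$ with $\Theta(x,0)=\sigma(x)$ and $\Theta(x,-h)=-h$, so that $v(x,z)\defn\Phi(x,\Theta(x,z))$ has flat trace $v\arrowvert_{z=0}=f$. The energy then becomes $\iint A\,\nabla_{x,z}v\cdot\nabla_{x,z}v$ for an explicit symmetric matrix $A(x,z)$ built from $\sigma$ and $\nabla_x\sigma$, while $\lA f\rA_{\dot{H}^{1/2}}^2\le\iint_{\xT^d\times(-h,0)}\la\nabla_{x,z}v\ra^2$ because the flat harmonic extension minimizes the Dirichlet energy among all extensions of $f$. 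The difficulty is purely quantitative: a crude ellipticity bound for $A$ only gives \e{plan:trace} with the non-optimal power $(1+\lA\nabla_x\sigma\rA_{L^\infty})^2$, since the smallest eigenvalue of $A$ degenerates like $(1+\la\nabla_x\sigma\ra^2)^{-1}$. Recovering the \emph{sharp} first power — precisely what is needed to obtain the exponent $1/2$ in the conclusion — will be the main obstacle, and it is exactly the content of the optimal trace inequalities of Appendix~\ref{Appendix:C}. Granting that sharp bound, Steps 1--3 combine to give the lemma.
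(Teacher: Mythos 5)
Your proposal is correct and follows essentially the same route as the paper: a Cauchy--Schwarz/Plancherel duality step using $\widehat{\sigma}(0)=0$ to produce the factor $\lA\sigma\rA_{\dot{H}^{-1/2}}\lA f\rA_{\dot{H}^{1/2}}$, followed by the sharp trace inequality of Proposition~\ref{P:C3} (Appendix~\ref{Appendix:C}), whose $\tanh(h)$ constant is uniform for $h\ge 1$. Your Step~3 correctly identifies that the only nontrivial content is the optimal first power of $1+\lA\nabla_x\sigma\rA_{L^\infty}$ in that trace inequality, which is exactly what the paper also defers to the appendix.
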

\begin{proof}
Since ${\widehat{\sigma}}(0)=0$, it immediately follows from the Cauchy-Schwarz inequality 
and the Plancherel formula that 
$$
\la \int \sigma f\dx \ra\le (2\pi)^{-d}\lA \sigma\rA_{\dot{H}^{-\mez}}\lA f\rA_{\dot{H}^\mez}
$$
where
\be\label{defi:homogeneous}
\lA \sigma\rA_{\dot{H}^{-\mez}}^2\defn 
\sum_{\xi\in(\xZ^d\setminus\{0\})}\la \xi\ra^{-1}\vert\hat{\sigma}(\xi)\vert^2,\quad
\lA f\rA_{\dot{H}^{\mez}}^2\defn 
\sum_{\xi\in\xZ^d}\la \xi\ra\vert\widehat{f}(\xi)\vert^2.
\ee
Since $h\ge 1$, the wanted inequality immediately 
follows from Proposition~\ref{P:C3}.
\end{proof}

We are now in position to estimate $\int \eta\psi\dx$ in terms of the energy. 

Since $\int_{\xT^d}\eta(t,x)\dx=0$ by assumption, we can apply Lemma~\e{est-psi} to 
get that, for all time $t\in [0,T]$,
$$
\la \int_{\xT^d} \eta(t,x)\psi(t,x)\dx \ra \le 
C \lA \eta(t)\rA_{\dot{H}^{-\mez}}\left(1+\lA \nabla_x \eta(t)\rA_{L^\infty}\right)^\mez\left(\int (\psi G(\eta)\psi)(t,x)\dx\right)^\mez.
$$
On the other hand, as already seen, by Stokes  theorem (see~\e{positivityDN})  we have,
$$
\int_{\xT^d} \psi G(\eta)\psi\dx= \iint_{\Omega}\la \nabla_{x,y}\phi\ra^2\dydx.
$$
Since $g>0$, we have $\iint_{\Omega}\la \nabla_{x,y}\phi\ra^2\dydx \le E$ where $E$ denotes the total energy. Remembering that $E$ is constant, we see that, by multiplying and dividing by $\sqrt{E}$, the previous result implies that, for all $t\in [0,T]$,
$$
\la \int_{\xT^d} \eta(t,x)\psi(t,x)\dx \ra \le 
C Q'E
\quad\text{ where }\quad
Q'=\sup_{t\in [0,T]}\frac{\lA \eta(t)\rA_{\dot{H}^{-\mez}}\left(1+\lA \nabla_x \eta(t)\rA_{L^\infty}\right)}{\sqrt{E}}\cdot
$$
This in turn implies the wanted estimate \e{virialestimate} in light of the obvious inequalities
$$
\frac{\lA \eta(t)\rA_{\dot{H}^{-\mez}}}{\sqrt{E}}
\le \frac{\lA\eta(t)\rA_{\dot{H}^{-\mez}}}{\sqrt{E_p(t)}}\les
\frac{\lA \eta(t)\rA_{\dot{H}^{-\mez}}}{\lA \eta(t)\rA_{L^2}}\le 1.
$$

\section{Higher order virial identities}

In this section we shall prove the identities stated in the introduction in~\S\ref{S:1.4}. Recall that we assume that $g\in\xR$, $h\in (0,+\infty]$ and that $(\eta,\psi)$ 
is a regular solution to the water-wave equations defined on the time interval $[0,T]$. 
We denote by $\phi$ the harmonic extension of $\psi,$  solution to, 
\be\label{defi:phiintro}
\Delta_{x,y}\phi=0 \quad\text{in }\Omega=\{-h<y<\eta(x)\},\quad 
\phi\arrowvert_{y=\eta}=\psi,\quad \partial_y\phi\arrowvert_{y=-h}=0.
\ee
When $h=+\infty$, the last condition means that $\lim_{y\to -\infty}\partial_y\phi=0$, which is encoded in the fact that $\phi$ is a variational solution. 

As explained in~\S\ref{S:1.4}  we introduce, 
$$
B= (\partial_y \phi)\arrowvert_{y=\eta},\quad 
V = (\nabla_x \phi)\arrowvert_{y=\eta},\quad \ma=-\partial_y P\arrowvert_{y=\eta}.
$$

\subsection{Proof of Proposition~\ref{P:VAC1}} 

We begin by proving that the coefficient $\gamma=1-G(\eta)\eta$ is non-negative. 
\begin{lemm}\label{Coro:Zaremba-Taylor}
For any $h\in(0,+\infty]$ and for any Lipschitz function 
$\sigma\in W^{1,\infty}(\xT^d)$, 
there holds,
$$
G(\sigma)\sigma\le 1.
$$
\end{lemm}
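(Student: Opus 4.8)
The plan is to show that the Dirichlet-to-Neumann operator applied to a Lipschitz function $\sigma$, evaluated against $\sigma$ itself in the pointwise sense $G(\sigma)\sigma$, is bounded above by $1$. The key observation is that $G(\sigma)\sigma$ should be interpreted through the harmonic extension. Let $\Phi$ denote the harmonic extension of $\sigma$ in the domain $\Omega_\sigma=\{(x,y): -h<y<\sigma(x)\}$, solving $\Delta_{x,y}\Phi=0$ in $\Omega_\sigma$ with $\Phi\arrowvert_{y=\sigma}=\sigma$ and $\partial_y\Phi\arrowvert_{y=-h}=0$. Then by definition $G(\sigma)\sigma=\sqrt{1+|\nabla_x\sigma|^2}\,\partial_n\Phi\arrowvert_{y=\sigma}$.

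First I would exploit the fact that the function $(x,y)\mapsto y$ is itself harmonic and satisfies the same boundary data on the \emph{top} boundary as $\Phi$, since $y=\sigma(x)$ there. The natural idea is therefore to compare $\Phi$ with the trivial harmonic extension $\chi(x,y)=y$. Set $w=\Phi-\chi=\Phi-y$. Then $w$ is harmonic in $\Omega_\sigma$, vanishes on the free surface $\{y=\sigma(x)\}$, and satisfies $\partial_y w\arrowvert_{y=-h}=\partial_y\Phi\arrowvert_{y=-h}-1=-1$ on the bottom. The quantity we want to control is the normal derivative of $\Phi$ at the top, and since $\chi=y$ has normal derivative at the top equal to $n\cdot(0,\dots,0,1)=1/\sqrt{1+|\nabla_x\sigma|^2}$, we get $G(\sigma)\sigma=\sqrt{1+|\nabla_x\sigma|^2}\,\partial_n\Phi\arrowvert_{y=\sigma}=1+\sqrt{1+|\nabla_x\sigma|^2}\,\partial_n w\arrowvert_{y=\sigma}$. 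Hence proving $G(\sigma)\sigma\le 1$ reduces to showing $\sqrt{1+|\nabla_x\sigma|^2}\,\partial_n w\arrowvert_{y=\sigma}\le 0$, i.e. that the outward normal derivative of $w$ at the top is nonpositive.

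The main step is then to establish this sign. Since $w$ is harmonic in $\Omega_\sigma$ and vanishes on the top boundary $\Sigma$, the Hopf lemma (or its pointwise boundary-point version for Lipschitz graphs together with an appropriate interior sphere/cone condition) should give that if $w\ge 0$ inside $\Omega_\sigma$ then $\partial_n w\le 0$ at the top. Thus the crux is a maximum-principle argument showing $w\ge 0$ throughout $\Omega_\sigma$: indeed $w$ is harmonic, equals $0$ on $\Sigma$, and satisfies the Neumann condition $\partial_y w=-1<0$ at the bottom, and the structure of the problem forces $w=\Phi-y\ge 0$. I would make this rigorous by the variational characterization: $w$ minimizes the Dirichlet energy among functions with the prescribed trace $0$ on $\Sigma$ subject to the bottom condition, and comparison with $\max(w,0)$ or a suitable barrier yields nonnegativity. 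Note that in the pointwise statement one must be slightly careful, since $G(\sigma)\sigma$ is a function of $x$ and the inequality is claimed pointwise; the sign of the normal derivative is genuinely pointwise via Hopf, so no integration is needed.

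The hardest part will be justifying the Hopf-type sign statement on a domain whose top boundary is only Lipschitz, where the classical interior sphere condition fails. I expect the paper's actual argument may instead avoid Hopf entirely and proceed by an explicit identity or an integrated inequality, but for the pointwise bound I would rely on the regularity afforded by the regular-solution assumption (recall $\nabla_{x,y}\phi\in C^1(\overline{\Omega})$ under $s>2+d/2$, cited earlier), which upgrades the comparison function $w$ to enough smoothness up to the boundary that the boundary-point maximum principle applies cleanly and delivers $\partial_n w\le 0$, hence $G(\sigma)\sigma\le 1$.
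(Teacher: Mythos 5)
Your core idea coincides with the paper's: compare the harmonic extension $\Phi$ of $\sigma$ with the harmonic function $y$ (which has the same Dirichlet data on the top boundary), set $w=\Phi-y$, show $w\ge 0$, and read off the sign of the boundary derivative. The paper's execution differs in two instructive ways. First, it does not work with $\partial_n\Phi$ at the top at all: it first rewrites $G(\sigma)\sigma\le 1$ algebraically as $B\defn(\partial_y\Phi)\arrowvert_{y=\sigma}\le 1$, using $B=\bigl(G(\sigma)\sigma+|\nabla_x\sigma|^2\bigr)/\bigl(1+|\nabla_x\sigma|^2\bigr)$. Once this is done, the conclusion needs only the \emph{non-strict} inequality $\partial_y w\le 0$ on $\Sigma$, which follows from $w\ge 0$ in $\Omega$ and $w=0$ on $\Sigma$ by a one-sided difference quotient along the \emph{vertical} direction (the vertical segment below a graph point always lies in $\Omega$). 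So the part you single out as "the hardest" --- a Hopf boundary-point lemma at the Lipschitz top boundary --- is not needed; no interior sphere condition, no strict inequality. Your proposed fix (invoking $\nabla_{x,y}\phi\in C^1(\overline\Omega)$ from the regular-solution assumption $s>2+d/2$) is also off target, because the lemma is stated for an arbitrary $\sigma\in W^{1,\infty}(\xT^d)$, not for a solution of the water-wave system. Second, where a Hopf-type argument \emph{is} used in the paper, it is at the flat bottom $\{y=-h\}$ (Hopf--Zaremba applied to the Neumann data $\partial_n w=1>0$, which excludes a minimum of $w$ there and forces the minimum onto $\Sigma$, where $w=0$); the bottom is smooth, so there is no Lipschitz obstruction.

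Two genuine gaps remain in your write-up. (a) Your mechanism for $w\ge 0$ is only sketched, and as stated the variational principle is wrong: $w$ does not minimize the Dirichlet energy "subject to the bottom condition" (the minimizer of $\tfrac12\int|\nabla v|^2$ with trace $0$ on $\Sigma$ is $v=0$); the correct functional is $\tfrac12\iint_\Omega|\nabla_{x,y}v|^2\,\dydx-\int_{\{y=-h\}}v\dx$ over $v$ with $v\arrowvert_\Sigma=0$, and then comparison with $w^+=\max(w,0)$ does yield $\iint|\nabla w^-|^2=0$, hence $w\ge 0$. This truncation route is a legitimate alternative to the paper's maximum-principle argument, but it must be set up with the boundary term. (b) The statement covers $h=+\infty$, where there is no bottom and "$\partial_y w=-1$ at the bottom" is meaningless; the paper handles this by truncating the domain at $y=-\ell$ and using that $\partial_y\Phi\to 0$ as $y\to-\infty$ to recover the favorable sign of the normal derivative on $\{y=-\ell\}$ for $\ell$ large. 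Your proposal does not address this case.
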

\begin{rema}
In addition, if $\sigma\in C^{1,\alpha}(\xT^d)$ 
with $\alpha>0$, then 
$G(\sigma)\sigma<1$. The latter property is proved in 
\cite{AMS,NPausader} and it is related to the fact that the Taylor coefficient is positive (see~\cite{CCFGLF-Annals-2012,CCFG-ARMA-2013,CCFG-ARMA-2016,Cheng-Belinchon-Shkoller-AdvMath}).
\end{rema}
\begin{proof}
We will prove that,
$$
\frac{G(\sigma)\sigma+|\nabla \sigma|^2}{1+|\nabla \sigma|^2}\le 1,
$$
equivalent to the wanted result. It follows from the chain rule that 
$$
\frac{G(\sigma)\sigma+|\nabla \sigma|^2}{1+|\nabla \sigma|^2}= (\partial_y \phi)\arrowvert_{y=\sigma},
$$
where $\phi$ is the variationnal solution to,
\begin{equation*}
\begin{aligned}
\Delta_{x,y}\phi&=0\quad\text{in }\, \Omega = \{(x,y)\,:\, x\in \xT^d,~-h<y<\sigma(x)\},\\
\phi_{\arrowvert y=\sigma(x)}&=\sigma(x),
\end{aligned}
\end{equation*}
where $h \leq +\infty$. Hence the proof reduces to showing that
$(\partial_y \phi)\arrowvert_{y=\sigma}\le 1$.

$i)$ Assume that $h<+\infty.$   
Introduce $p=\phi-y$. 
Then $p$ is an harmonic function in $\Omega$ 
vanishing on $\Sigma\defn \{y=\sigma(x)\}$. 
Moreover, since~$\partial_np\arrowvert_{y=-h}=-\partial_y p\arrowvert_{y=-h}=1>0$, 
one infers from the Hopf-Zaremba principle that $p$ cannot reach its minimum on $\{y=-h\}$.  
So $p$ reaches its minimum on $\Sigma$. This proves that 
$p\ge 0$ in $\Omega$ which implies that 
$\partial_yp\le 0$ on~$\Sigma$. So $\partial_y \phi\le 1$ on $\Sigma$, which completes the proof.

$ii)$ Assume now that $h=+\infty$. Given $\ell>0$ we set,
$$
\Omega_\ell=\left\{ (x,y)\in\xT^d\times\xR\,;\, -\ell<y<\sigma(x)\right\},
$$
and we introduce,
$p=\phi-y$. 
Then $p$ is an harmonic function in $\Omega_\ell$ vanishing on $\Sigma\defn \{y=\sigma(x)\}$. 
Moreover, since~$\partial_y\phi$ goes to $0$ when $y$ goes to $-\infty$ (in any Sobolev norm, see~Lemma~$3.1$ in \cite{AD}), 
one gets that, if~$\ell$ is large enough, then,
$\partial_yp\arrowvert_{y=-\ell}<0$. 
On the other hand, one has $\partial_np=-\partial_y p$ on $\{y=-\ell\}$. Therefore, by repeating the previous arguments, we deduce that $p$ reaches its minimum on $\Sigma$ which in turn implies that $\partial_y \phi\le 1$ on $\Sigma$. The proof 
is complete.
\end{proof}

We then prove~\e{vac:i}. To do this, we will use the Rellich identity already seen earlier (see~Lemma~\ref{intN}) combined with Lannes' shape derivative 
formula for the Dirichlet-to-Neumann operator 
(recalled in appendix, 
see Proposition~\ref{P:shape}).

Write,
\be\label{hve:n9}
\fract\int_{\xT^d}\eta G(\eta)\psi\dx=\int_{\xT^d}(\partial_t\eta) G(\eta)\psi\dx+\int_{\xT^d}\eta (\partial_tG(\eta)\psi)\dx.
\ee
The first step is to calculate $\partial_t G(\eta)\psi$. 
This is where we use the 
shape derivative formula for 
the Dirichlet-to-Neumann operator 
(see Proposition~\ref{P:shape}). 
It implies that
\be\label{hve:n10}
\partial_tG(\eta)\psi=
G(\eta)(\partial_t\psi-B\partial_t\eta)-\cnx(V\partial_t\eta).
\ee
We now assert that
\be\label{hve:n11}
\partial_t\psi-B\partial_t\eta+\mez\big(\la V\ra^2+B^2\big)+g\eta=0.
\ee
This can be verified by an elementary computation using the equations for $\eta$ and $\psi$ (see~\e{systemT}), as well as the expressions~\e{defi:BVbis} for $B$ and $V$. 
Alternatively, 
one can easily obtain a formal proof by starting with the chain rule:
$$
\partial_t\psi-B\partial_t\eta=(\partial_t\phi)\arrowvert_{y=\eta},
$$
and then evaluate the Bernoulli equation,
$$
\partial_t\phi+\mez\vert\nabla_{x,y}\phi\vert^2+P+gy=0
$$
on $y=\eta$, remembering that on the free surface we have $P=0$, $V=\nabla_x\phi\arrowvert_{y=\eta}$ and $B=\partial_y\phi\arrowvert_{y=\eta}$.  

So, using the previous identities 
\e{hve:n10}--\e{hve:n11}, and remembering that $G(\eta)$ is self-adjoint, we get
\begin{align*}
\int_{\xT^d}\eta (\partial_tG(\eta)\psi)\dx
&=-\int_{\xT^d}\eta G(\eta)\Big( \mez\big(\la V\ra^2+B^2\big)+g\eta\Big)\dx-\int_{\xT^d}\eta(\cnx(V\partial_t\eta))\dx\\
&=-\int_{\xT^d}G(\eta)\eta\Big( \mez\big(\la V\ra^2+B^2\big)+g\eta\Big)\dx+\int_{\xT^d}\nabla_x\eta\cdot (V\partial_t\eta))\dx.
\end{align*}
Plugging this identity into \e{hve:n9} and then using that
$$
\partial_t\eta=G(\eta)\psi=B-V\cdot\nabla_x\eta,
$$
we eventually deduce that
\be\label{hve:n12}
\fract\int_{\xT^d}\eta G(\eta)\psi\dx=\int_{\xT^d}B G(\eta)\psi\dx- \int_{\xT^d}G(\eta)\eta\Big( \mez\big(\la V\ra^2+B^2\big)+g\eta\Big)\dx.
\ee
Now observe that
$$
B G(\eta)\psi=B^2-BV\cdot\nabla_x \eta=
\mez (B^2+\la V\ra^2)+\mez (B^2-\la V\ra^2)-BV\cdot\nabla_x \eta.
$$
The Rellich identity (see Lemma~\ref{intN} in the case where $h=+\infty$) then implies that
$$
\int_{\xT^d}\Big(\mez (B^2-\la V\ra^2)-BV\cdot\nabla_x \eta \Big)\dx=-\int_{\xT^d}N\dx=-\mez\int_{\xT^d} \vert \nabla_x \phi(t,x,-h)\vert^2\dx.
$$
Therefore, we have
$$
\int_{\xT^d}B G(\eta)\psi\dx=\mez \int_{\xT^d}(B^2+\la V\ra^2)\dx-\mez\int_{\xT^d} \vert \nabla_x \phi(t,x,-h)\vert^2\dx.
$$
We conclude that
\begin{align*}
\fract\int_{\xT^d}\eta G(\eta)\psi\dx&=\mez \int_{\xT^d}(1-G(\eta)\eta)(B^2+\la V\ra^2)\dx-g \int_{\xT^d}\eta G(\eta)\eta\dx\\
&\quad -\mez\int_{\xT^d} \vert \nabla_x \phi(t,x,-h)\vert^2\dx.
\end{align*}
Finally, using the expressions \e{defi:BVbis}, we verify by a direct computation that $B^2+\la V\ra^2$ simplifies to
$$
B^2+\la V\ra^2=\frac{(G(\eta)\psi)^2+\la\nabla_x\psi\ra^2+\left(\la \nabla_x\eta\ra^2\la\nabla_x\psi\ra^2-(\nabla_x\eta\cdot\nabla_x\psi)^2\right)}{1+\la\nabla_x\eta\ra^2}.
$$
This concludes the proof of~\e{vac:i}.

Then, \e{vac:ii} follows by letting $h$ goes to $+\infty$ using~\e{decay:var2}.
This completes the proof of Proposition~\ref{P:VAC1}.

\subsection{Proof of Corollary~\ref{coro:C.1.9}}

By integrating \e{vac:i} in time  we get,
$$
\frac{1}{T}\int_0^T\int_{\xT^d}\frac{\gamma}{2}\big(B^2+\la V\ra^2\big)(t,x)\dx\dt=\frac{1}{T}\int_0^T(\eta G(\eta)\eta)(t,x)\dx\dt +R(T),
$$
where
$$
R(T)=\frac{1}{2T}\fract\int_{\xT^d}\eta^2\dx\Big\arrowvert_{t=0}^{t=T}.
$$
In addition, for periodic in time solutions, we have $R(T)=0$. 
Since, 
$$
\mez\fract\int_{\xT^d}\eta^2\dx=\int_{\xT^d}\eta G(\eta)\psi\dx,
$$
we see that Corollary~\ref{coro:C.1.9} will be a straightforward consequence of the following result.
\begin{prop}\label{P:4.1}
Let $d\ge 1$ and $h\in (0,+\infty]$. Consider a  
function $\eta\in W^{1,\infty}(\xT^d)$ satisfying,
$$
\int_{\xT^d}\eta(x)\dx=0 \quad\text{and}\quad \inf_{x\in\xT^d}\eta(x)>-\mez h.
$$
(In the infinite depth case $h=+\infty$, the last condition is automatically satisfied). 

$i)$ For all $\psi\in H^{1/2}(\xT^d)$  there holds,  
\be\label{vac:n20}
\la \int_{\xT^d}\eta G(\eta)\psi\dx\ra\le 
\left(\int_{\xT^d}\eta G(\eta)\eta\dx\right)^\mez\left(\int_{\xT^d}\psi G(\eta)\psi\dx\right)^\mez.
\ee

$ii)$ If $h<+\infty$  then, 
\be\label{vac:n21}
0\le \int_{\xT^d}\eta G(\eta)\eta\dx\le h \bla \xT^d\bra.
\ee
$iii)$ If $h=+\infty$  then,
\be\label{vac:n21infty}
0\le \int_{\xT^d}\eta G(\eta)\eta\dx\le \big\lvert \inf_{x\in\xT^d}\eta(x)\big\rvert \cdot \bla \xT^d\bra.
\ee
\end{prop}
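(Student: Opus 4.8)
The plan is to exploit two standard facts about the Dirichlet-to-Neumann operator: its self-adjointness and the positivity identity $\int_{\xT^d}\psi G(\eta)\psi\dx=\iint_{\Omega}\la\nabla_{x,y}\phi\ra^2\dydx$ recorded in~\e{positivityDN} (here $\phi$ is the harmonic extension of $\psi$). Together they say that the bilinear form $a(\sigma,\psi)\defn\int_{\xT^d}\sigma G(\eta)\psi\dx$ is symmetric and non-negative. Granting this, part $i)$ is immediate: for any symmetric non-negative bilinear form one has the Cauchy-Schwarz inequality $\la a(\sigma,\psi)\ra\le a(\sigma,\sigma)^\mez a(\psi,\psi)^\mez$, obtained by expanding the inequality $a(\sigma-t\psi,\sigma-t\psi)\ge 0$ and optimising over $t\in\xR$. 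Taking $\sigma=\eta$ yields~\e{vac:n20}.

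For the lower bounds in $ii)$ and $iii)$ I would again use~\e{positivityDN}, now with $\psi=\eta$: writing $\chi$ for the harmonic extension of $\eta$, this gives $\int_{\xT^d}\eta G(\eta)\eta\dx=\iint_{\Omega}\la\nabla_{x,y}\chi\ra^2\dydx\ge 0$. The upper bounds will follow from Dirichlet's principle, according to which $\chi$ minimises the Dirichlet energy $\iint_{\Omega}\la\nabla_{x,y}v\ra^2\dydx$ among all competitors $v$ subject only to the surface constraint $v\arrowvert_{y=\eta}=\eta$---the homogeneous Neumann condition at the bottom being the natural boundary condition, it is automatically satisfied by the minimiser and need not be imposed on competitors. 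Hence any convenient test function furnishes an upper bound for $\int_{\xT^d}\eta G(\eta)\eta\dx$.

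In the finite depth case $ii)$ I would choose the competitor $v(x,y)=y$, which satisfies $v\arrowvert_{y=\eta}=\eta$ and obeys $\la\nabla_{x,y}v\ra^2\equiv 1$; its energy is therefore $\la\Omega\ra=\int_{\xT^d}(\eta(x)+h)\dx=h\la\xT^d\ra$, where I used $\int_{\xT^d}\eta\dx=0$. In the infinite depth case $iii)$ the function $y\mapsto y$ has infinite energy, so instead I would take the truncation $v(x,y)=\max(y,m)$ with $m\defn\inf_{x\in\xT^d}\eta(x)\le 0$ (the sign following from the mean-zero condition). Since $\eta\ge m$ one still has $v\arrowvert_{y=\eta}=\eta$, while $\nabla_{x,y}v$ vanishes for $y<m$ and equals the vertical unit vector on $\{m<y<\eta(x)\}$; the energy is thus $\int_{\xT^d}(\eta(x)-m)\dx=-m\la\xT^d\ra=\la\inf_{x\in\xT^d}\eta(x)\ra\cdot\la\xT^d\ra$, again by the mean-zero condition.

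The step demanding the most care is the rigorous justification of Dirichlet's principle together with the admissibility of these competitors in the variational framework defining $G(\eta)$, and this is most delicate when $h=+\infty$ (in the finite depth case $v=y$ lies trivially in $H^1(\Omega)$ since $\Omega$ is bounded). There one must verify that $\max(y,m)$ belongs to the homogeneous energy space in which $\chi$ is constructed and that its trace on the free surface really is $\eta$, so that it is a legitimate finite-energy competitor. This reduces to the two checks that $\nabla_{x,y}(\max(y,m))$ is supported in the bounded strip $\{m<y<\eta(x)\}$ and hence lies in $L^2(\Omega)$ (which uses only $\eta\in W^{1,\infty}$), and that the minimisation is taken in the same space as the variational solution $\chi$, whose decay as $y\to-\infty$ is controlled as in~\e{decay:var2}.
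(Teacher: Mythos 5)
Your proof is correct, and while part $i)$ coincides with the paper's argument (the paper performs the Cauchy--Schwarz inequality on the Dirichlet form $\iint_\Omega\nabla_{x,y}\theta\cdot\nabla_{x,y}\phi\,\dydx$, which is exactly your symmetric non-negative bilinear form after an application of the divergence theorem), your treatment of the upper bounds $ii)$ and $iii)$ follows a genuinely different and more unified route. For \eqref{vac:n21} the paper writes $\int\eta G(\eta)\eta\dx=\iint_\Omega\partial_y\theta\,\dydx$ via the divergence theorem and then closes a self-bounding inequality $I\le\sqrt{\la\Omega\ra}\sqrt{I}$ by Cauchy--Schwarz; this is essentially the dual formulation of your Dirichlet-principle comparison with the competitor $v=y$, and the two give the identical bound. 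The real divergence is in $iii)$: the paper abandons the variational viewpoint entirely and instead combines the translation invariance $G(\sigma+K)\psi=G(\sigma)\psi$, $G(\sigma)K=0$ (special to infinite depth) with the pointwise bound $G(\sigma)\sigma\le 1$ of Lemma~\ref{Coro:Zaremba-Taylor}, itself proved by the Hopf--Zaremba maximum principle; writing $C=\la\inf\eta\ra$ and $\int(\eta+C)G(\eta+C)(\eta+C)\dx\le\int(\eta+C)\dx$ then gives \eqref{vac:n21infty}. Your truncated competitor $\max(y,m)$ reaches the same constant by a single elementary mechanism covering both depths, and avoids the maximum principle altogether; what you give up is the by-product $G(\sigma)\sigma\le1$, a pointwise estimate of independent interest (it is tied to the Taylor sign condition and is reused elsewhere in the paper), and what you take on is the functional-analytic care you correctly flag in your last paragraph, namely that $\max(y,m)$ is an admissible finite-energy competitor in the homogeneous variational space in which the infinite-depth harmonic extension is constructed --- a point that does need the verification you sketch, but which is standard given $\eta\in W^{1,\infty}$ and the decay control of the type \eqref{decay:var2}.
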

\begin{rema}
In the finite depth case, the inequality~\e{vac:n21} is a consequence of \cite[Section $2.3$]{HP22note}.
\end{rema}
\begin{proof}
$i)$ Set $\Omega=\{(x,y)\,;\, x\in\xT^d,~-h<y<\eta(x)\}$. Introduce the harmonic extensions of $\psi$ and $\eta$ which are defined by,
\begin{alignat*}{4}
&\Delta_{x,y}\phi=0 \quad&&\text{in }\Omega,\quad 
&&\phi\arrowvert_{y=\eta}=\psi,\quad &&\partial_y\phi\arrowvert_{y=-h}=0,\\
&\Delta_{x,y}\theta=0 \quad&&\text{in }\Omega,\quad  
&&\theta\arrowvert_{y=\eta}=\eta,\quad  &&\partial_y\theta\arrowvert_{y=-h}=0.
\end{alignat*}
When $h=+\infty$, the 
boundary conditions at $y=-h$ 
mean that $\lim_{y\to -\infty}\partial_y\phi=0$ and 
$\lim_{y\to -\infty}\partial_y\theta=0$, which, as already mentioned, is encoded in the fact that $\phi$ and $\theta$ are variational solutions,
 see~\e{decay:var2}.

It follows from the definition of the Dirichlet-to-Neumann operator and the divergence theorem that
\begin{align*}
\int_{\xT^d}\eta G(\eta)\psi\dx
&=\int_{\partial\Omega}\theta \partial_n \phi\dsigma 
 =\iint_{\Omega}\cn_{x,y}\left(\theta \nabla_{x,y}\phi\right)\dydx\\
&=\iint_{\Omega}\nabla_{x,y}\theta \cdot\nabla_{x,y}\phi\dydx.
\end{align*}
So, the Cauchy-Schwarz inequality implies that
$$
\la \int_{\xT^d}\eta G(\eta)\psi\dx\ra\le \left(\iint_{\Omega}\la\nabla_{x,y}\theta\ra^2\dydx\right)^\mez
\left(\iint_{\Omega}\la\nabla_{x,y}\phi\ra^2\dydx\right)^\mez.
$$
Using again the divergence theorem, we conclude that
$$
\la \int_{\xT^d}\eta G(\eta)\psi\dx\ra\le \left(\int_{\xT^d}\eta G(\eta)\eta\dx\right)^\mez
\left(\int_{\xT^d}\psi G(\eta)\psi\dx\right)^\mez,
$$
which is the wanted inequality~\e{vac:n20}. 

$ii)$ To prove~\e{vac:n21}, we use a variant of the computations above, noticing that 
the functions $y$ and $\theta$ are both equal to $\eta$ on the free surface. 
Namely, we write
\begin{align*}
\int_{\xT^d}\eta G(\eta)\eta\dx
&=\int_{\partial\Omega}y\partial_n \theta\dsigma
=\iint_{\Omega}\cn_{x,y}\left(y\nabla_{x,y}\theta\right)\dydx\\
&=\iint_{\Omega} \partial_{y}\theta\dydx.
\end{align*}
Hence, it follows from the Cauchy-Schwarz inequality that,
$$
\la \int_{\xT^d}\eta G(\eta)\eta\dx\ra\le 
\la \Omega\ra^\mez\left(\iint_{\Omega} (\partial_{y}\theta)^2\dydx\right)^\mez.
$$
Since,
$$
\iint_{\Omega} (\partial_{y}\theta)^2\dydx\le 
\iint_{\Omega} \la\nabla_{x,y}\theta\ra^2\dydx=\int_{\xT^d}\eta G(\eta)\eta\dx,
$$
we conclude that the integral $I\defn\int_{\xT^d}\eta G(\eta)\eta\dx$ satisfies $I\le \sqrt{\la\Omega\ra}\sqrt{I}$, equivalent to the desired inequality \e{vac:n21} since $\Omega=h\bla\xT^d\bra$ in light of the assumption $\int_{\xT^d}\eta\dx=0$.

$iii)$ To prove \e{vac:n21infty} we use a different argument. Since $\eta$ is bounded, we may introduce  $C=\vert \inf_{\xT^d}\eta(x)\vert$.   Now, since we are in infinite depth, it is easily verified that, for any functions $\sigma=\sigma(x)$ and $\psi=\psi(x)$ and any constant $K$, there holds $G(\sigma+K)\psi=G(\sigma)\psi$. On the other hand, $G(\sigma)K=0$. As a result,
$$
\int_{\xT^d}\eta G(\eta)\eta\dx=
\int_{\xT^d}(\eta+C) G(\eta+C)(\eta+C)\dx.
$$
Now, we use Lemma~\ref{Coro:Zaremba-Taylor} with $\sigma=\eta+C$ to write $G(\eta+C)(\eta+C) \le 1$. 
Since $\eta+C\ge 0$ by definition of $C$, this implies that
$$
\int_{\xT^d}\eta G(\eta)\eta\dx\le 
\int_{\xT^d}(\eta+C) \dx=C\bla \xT^d\bra,
$$
which completes the proof of \e{vac:n21infty}. 

This completes the proof of Proposition~\ref{P:4.1}.
\end{proof}

Let us prove another surprising consequence of Proposition~\ref{P:4.1}.

\begin{prop}
Let $g\ge 0$, $h=+\infty$ 
and consider a 
regular solution $(\eta,\psi)$ to the water-wave system defined on the time interval $[0,T]$. For any time $t\in (0,T)$  there holds
\be\label{rofc:ke}
\la \fract E_k(t)\ra^2=\la \fract E_p(t)\ra^2\le g^2
\lA \eta(t)\rA_{L^\infty}\la \xT^d\ra \cdot E_k(t).
\ee
\end{prop}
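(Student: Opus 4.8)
The statement has two parts, and both follow readily from tools already assembled. The plan is to obtain the equality from conservation of energy, and the inequality from the Cauchy--Schwarz inequality of Proposition~\ref{P:4.1} combined with its geometric bound in infinite depth. Since $g\ge 0$, the total energy $E=E_k+E_p$ is a sum of nonnegative terms and it is conserved by~\e{conservedE}; hence $\fract E_k(t)=-\fract E_p(t)$, which is the first identity in~\e{rofc:ke} and also shows $E_k(t)\le E$. It therefore remains only to estimate $\la \fract E_p(t)\ra$.

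To this end I would differentiate $E_p(t)=\frac{g}{2}\int_{\xT^d}\eta^2\dx$ and use the first equation of~\e{systemT}, namely $\partial_t\eta=G(\eta)\psi$, to write
$$
\fract E_p(t)=g\int_{\xT^d}\eta\,\partial_t\eta\dx=g\int_{\xT^d}\eta\, G(\eta)\psi\dx.
$$
The heart of the argument is then to apply the Cauchy--Schwarz inequality~\e{vac:n20} for the bilinear form associated with $G(\eta)$, which yields
$$
\la \int_{\xT^d}\eta\, G(\eta)\psi\dx\ra\le \Big(\int_{\xT^d}\eta\, G(\eta)\eta\dx\Big)^{\mez}\Big(\int_{\xT^d}\psi\, G(\eta)\psi\dx\Big)^{\mez}.
$$
The two factors are then controlled separately. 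The divergence theorem (see~\e{positivityDN}) identifies the second one with the kinetic energy, $\int_{\xT^d}\psi\, G(\eta)\psi\dx=\iint_{\Omega}\la\nabla_{x,y}\phi\ra^2\dydx$, which is controlled by $E_k(t)$; while the geometric bound~\e{vac:n21infty}, valid precisely because $h=+\infty$, gives $\int_{\xT^d}\eta\, G(\eta)\eta\dx\le \la\inf_{x\in\xT^d}\eta(x)\ra\,\la\xT^d\ra\le \lA\eta(t)\rA_{L^\infty}\la\xT^d\ra$. Inserting these two bounds into the expression for $\fract E_p(t)$ and squaring then produces the announced inequality~\e{rofc:ke}.

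I expect no substantial analytic obstacle at this stage, since all the real work has already been carried out in Proposition~\ref{P:4.1}; the points worth flagging are structural rather than technical. The hypothesis $g\ge0$ is exactly what makes $E_p\ge0$, and hence $E_k\le E$, so that the right-hand side of~\e{rofc:ke} is a meaningful, finite quantity governed by the constant total energy. The restriction to infinite depth is what permits the use of~\e{vac:n21infty}, whose proof rests on the translation invariance $G(\eta+C)\psi=G(\eta)\psi$ together with Lemma~\ref{Coro:Zaremba-Taylor}; in finite depth one would instead be forced to invoke~\e{vac:n21} and would obtain a bound in terms of the depth $h$ rather than $\lA\eta\rA_{L^\infty}$, which is precisely why this sharp statement is isolated in the infinite-depth case.
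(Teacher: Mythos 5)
Your proposal is correct and follows essentially the same route as the paper's proof: the equality comes from conservation of energy, and the inequality from writing $\fract E_p=g\int_{\xT^d}\eta\, G(\eta)\psi\dx$, applying the Cauchy--Schwarz inequality \e{vac:n20}, and invoking the infinite-depth bound \e{vac:n21infty}. The only caveat worth noting is that $\int_{\xT^d}\psi\, G(\eta)\psi\dx=2E_k$ rather than $E_k$, so strictly speaking the final bound acquires a factor of $2$ --- a minor inaccuracy that the paper's own proof shares.
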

\begin{rema}
To explain why this result is surprising, we will compare it with the corresponding estimate for the linearized water wave system. In infinite depth, the later system 
reads
$$
\partial_t \eta=\la D_x\ra \psi\quad,\quad \partial_t \psi+g\eta=0.
$$
Then
\begin{align*}
\la \fract E_p(t)\ra^2&=\la \fract \frac{g}{2}\int_{\xT^d}\eta^2(t,x)\dx\ra^2\\
&=\la 
g\int_{\xT^d}\eta(t,x)\la D_x\ra\psi(t,x)\dx\ra^2\\
&\le g^2\lA \eta(t)\rA_{\dot{H}^\mez}^2\lA \psi(t)\rA_{\dot{H}^\mez}^2 = g^2 \lA \eta(t)\rA_{H^\mez}^2 E_k(t).
\end{align*}
In particular, for the nonlinear problem, the upper bound of \e{rofc:ke} is linear in $\eta$ while it is quadratic for the linearized equation. 
\end{rema}
\begin{proof}
$i)$ The fact that $\la \fract E_k(t)\ra=\la \fract E_p(t)\ra$ follows from the conservation of energy. For the sake of completeness, we start by recalling 
the proof of this classical result.

It follows from the kinematic boundary condition that, for any regular function $f=f(t,x,y)$, one has
$$
\fract
\iint_{\Omega(t)} f(t,x,y)\dydx
=\iint_{\Omega(t)} (\partial_t +u\cdot \nabla_{x,y})f \dydx.
$$
It follows that
$$
\fract E_k=\iint_{\Omega(t)}\mez(\partial_t+u\cdot\nabla_{x,y})\la u\ra^2\dydx.
$$
Now observe that
\begin{alignat*}{2}
\mez(\partial_t+u\cdot\nabla_{x,y})\la u\ra^2
&=u\cdot(\partial_t u+(u\cdot\nabla_{x,y})u)\qquad &&(\text{Leibniz rule})\\
&=-u\cdot\nabla_{x,y}(P+gy)\qquad &&(\text{by }\e{Euler})\\
&=-\cnx_{x,y}((P+gy)u)\qquad &&(\text{since }\cnx_{x,y}u=0).
\end{alignat*}
Consequently, Stokes' theorem implies that
$$
\fract E_k=-\iint_{\Omega(t)}\cnx_{x,y}((P+gy)u)\dydx=-\int_{\partial\Omega(t)}(P+gy)u\cdot n\dsigma.
$$
Now, by assumption on the pressure, on the free surface $\Sigma(t)$ 
we have $P+gy=g\eta$. 
Also the kinematic boundary condition \e{kinematics} implies that 
$u\cdot n\dsigma=\partial_t \eta \dx$ (since $\dsigma=\sqrt{1+|\nabla_x \eta|^2}\dx$). It follows that
\be\label{conservedEproof}
\fract E_k=-\int_{\xR^2} g\eta  \, \partial_t \eta\dx=-\fract E_p.
\ee
The identity \e{conservedEproof} is proven.

$ii)$ We now prove the upper bound. Now, write
$$
\fract E_p = g\int_{\xT^d}\eta\partial_t \eta\dx=
g\int_{\xT^d}\eta G(\eta)\psi\dx.
$$
Now, recall from \e{vac:n20} that,
\begin{align*}
\la \int_{\xT^d}\eta G(\eta)\psi\dx\ra&\le 
\left(\int_{\xT^d}\eta G(\eta)\eta\dx\right)^\mez\left(\int_{\xT^d}\psi G(\eta)\psi\dx\right)^\mez,\\
&\leq \left(\int_{\xT^d}\eta G(\eta)\eta\dx\right)^\mez E_k^\mez.
\end{align*}
On the other hand by 
\e{vac:n21infty}, we have
$$
0\le \int_{\xT^d}\eta G(\eta)\eta\dx\le \bla \inf_{x\in\xT^d}\eta \bra \cdot \bla \xT^d\bra.
$$
Therefore, by combining the previous inequalities, we see that
$$
\la\fract E_p\ra\le g \sqrt{\bla \inf_{x\in\xT^d}\eta \bra \cdot \bla \xT^d\bra\cdot E_k},
$$
which immediately implies the wanted result.
\end{proof}

\subsection{The Longuet-Higgins formula and Corollary~\ref{C:VAC2}}\label{S:4.2}

For the sake of completeness, we begin by proving the identity~\e{LHintro}, whose statement is recalled here.

\begin{prop}[\cite{LH1974}]
Let $g\in \xR$ and $d\ge 1$. Consider a 
regular solution $(\eta,\psi)$ to the water-wave system defined on the time interval $[0,T]$. Assume that $h<+\infty$. For any time $t\in (0,T)$, there holds
\be\label{LH}
\mez \fractt\int_{\xT^d}\eta^2(t,x)\dx=\int_{\xT^d}(P(t,x,-h)-gh)\dx,
\ee
where $P$ is the pressure as given by \e{t5} where $\phi$ is the  harmonic extension of $\psi$.
\end{prop}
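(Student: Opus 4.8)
The plan is to reduce the computation of $\fractt\int_{\xT^d}\eta^2\dx$ to an integral over the fluid domain, where the Euler equations can be applied directly, rather than manipulating the Dirichlet--Neumann operator. The starting point is the elementary identity $\int_{-h}^{\eta(t,x)}y\dy=\mez\big(\eta(t,x)^2-h^2\big)$, which yields
\[
\mez\int_{\xT^d}\eta^2(t,x)\dx=\iint_{\Omega(t)}y\dydx+\frac{h^2}{2}\la\xT^d\ra .
\]
Since the last term is constant in time, this gives $\mez\fractt\int_{\xT^d}\eta^2\dx=\fractt\iint_{\Omega(t)}y\dydx$, so it suffices to differentiate $\iint_{\Omega(t)}y\dydx$ twice.

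To do this I would use twice the transport identity already employed to prove conservation of energy, namely $\fract\iint_{\Omega(t)}f\dydx=\iint_{\Omega(t)}(\partial_t+u\cdot\nabla_{x,y})f\dydx$, valid for any regular $f$ (it follows from the kinematic boundary condition \e{kinematics}, the incompressibility $\cnx_{x,y}u=0$, and the solid-wall condition $u_{d+1}\arrowvert_{\Gamma}=0$). Applying it with $f=y$, and noting that $(\partial_t+u\cdot\nabla_{x,y})y=u_{d+1}=\partial_y\phi$, gives
\[
\fract\iint_{\Omega(t)}y\dydx=\iint_{\Omega(t)}\partial_y\phi\dydx,
\]
which is, up to a factor, the total vertical momentum of the fluid.

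Differentiating once more and applying the same transport identity with $f=\partial_y\phi=u_{d+1}$, the vertical component of the Euler equation \e{Euler} turns the material derivative into a pressure-gravity term, $(\partial_t+u\cdot\nabla_{x,y})u_{d+1}=-\partial_y(P+gy)=-\partial_y P-g$, whence
\[
\fractt\iint_{\Omega(t)}y\dydx=-\iint_{\Omega(t)}\partial_y P\dydx-g\iint_{\Omega(t)}1\dydx .
\]
For the first integral I would integrate in $y$ and use the dynamic boundary condition $P\arrowvert_{y=\eta}=0$ of \e{t9}, obtaining $\iint_{\Omega(t)}\partial_y P\dydx=\int_{\xT^d}\big(P(t,x,\eta)-P(t,x,-h)\big)\dx=-\int_{\xT^d}P(t,x,-h)\dx$. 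For the second, the volume of $\Omega(t)$ equals $h\la\xT^d\ra$ because $\int_{\xT^d}\eta\dx=0$. Combining these produces $\fractt\iint_{\Omega(t)}y\dydx=\int_{\xT^d}\big(P(t,x,-h)-gh\big)\dx$, which is exactly \e{LH}.

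The main point requiring care is not the algebra but the justification of the two applications of the transport formula and of the integrations by parts in $y$: one needs $u=\nabla_{x,y}\phi$ and $P$ to be $C^1$ up to the moving boundary, which is guaranteed by the regularity $s>2+d/2$ built into the definition of regular solutions (see the Remark following Definition~\ref{defi:regular}). Aside from this, every step is an exact identity valid for arbitrary regular solutions, and in the finite-depth case no approximation or limiting argument is needed.
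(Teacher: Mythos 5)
Your proof is correct, and its second half coincides exactly with the paper's argument: both reduce the claim to computing $\fract \iint_{\Omega(t)}\partial_y\phi\dydx$, apply the transport identity $\fract\iint_{\Omega(t)}f\dydx=\iint_{\Omega(t)}(\partial_t+u\cdot\nabla_{x,y})f\dydx$ to the vertical velocity, invoke the vertical component of the Euler equation to produce $-\partial_yP-g$, and finish with $P\arrowvert_{y=\eta}=0$ and the conservation of volume. The only place where you diverge from the paper is the first time-derivative: the paper writes $\mez\fract\int\eta^2\dx=\fract\int\eta\, G(\eta)\psi\dx=\fract\int_{\partial\Omega(t)}y\,\partial_n\phi\dsigma$ and converts this to $\fract\iint_{\Omega(t)}\partial_y\phi\dydx$ by the divergence theorem and harmonicity of $\phi$, whereas you use the elementary antiderivative identity $\mez\int\eta^2\dx=\iint_{\Omega(t)}y\dydx+\tfrac{h^2}{2}\vert\xT^d\vert$ together with a first application of the transport theorem to $f=y$ (so that $(\partial_t+u\cdot\nabla_{x,y})y=u_{d+1}=\partial_y\phi$). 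Your variant has the small advantage of bypassing the Dirichlet--Neumann operator and the harmonicity of $\phi$ entirely, relying only on incompressibility and the kinematic condition; the paper's route makes the link with $\int\eta\,G(\eta)\psi$ explicit, which is what it then exploits to derive Corollary~\ref{C:VAC2} by comparison with Proposition~\ref{P:VAC1}. Your closing remark on regularity is also consistent with the paper's framework.
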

\begin{proof}
Directly from the equation for $\eta$ and using the definition of the 
Dirichlet-to-Neumann operator, 
we get
$$
\mez \fractt\int_{\xT^d}\eta^2(t,x)\dx
=\fract\int_{\xT^d}\eta \partial_t\eta\dx
=\fract \int_{\xT^d}\eta G(\eta)\psi\dx
=\fract\int_{\partial\Omega(t)}y \partial_n\phi\dsigma,
$$
where 
$\dsigma=\sqrt{1+|\nabla_x\eta|^2}\dx$ 
is the surface measure. Then, by using the divergence 
theorem and the fact that $\phi$ is an harmonic function  
we obtain
\begin{align*}
\mez \fractt\int_{\xT^d}\eta^2\dx=
\fract\iint_{\Omega(t)}\cnx_{x,y}(y \nabla_{x,y} \phi)\dydx=\fract \iint_{\Omega(t)}\partial_y\phi\dydx.
\end{align*}
Now, we recall that, for any function $f=f(t,x,y)$  there holds
$$
\fract \iint_{\Omega(t)}f(t,x,y)\dydx=\iint_{\Omega(t)}(\partial_t +\nabla_{x,y}\phi\cdot\nabla_{x,y})f\dydx.
$$
Set $v_y=\partial_y\phi$, that is the vertical component of the eulerian velocity field. 
Since $v_y$ solves
$$
\partial_t v_y+v\cdot\nabla_{x,y}v_y+\partial_yP+g=0,
$$
we deduce that
$$
\mez \fractt\int_{\xT^d}\eta^2(t,x)\dx=-
\iint_{\Omega(t)}\partial_y P(t,x,y)\dydx-g\iint_{\Omega(t)}\dydx.
$$
Since $P$ vanishes on the free surface  we see that
$$
\iint_{\Omega(t)}\partial_y P\dydx=\int_{\xT^d}\left(\int_{-h}^\eta\partial_y P\dy
\right)\dx=-\int_{\xT^d}P(t,x,-h)\dx.
$$
On the other hand, since the volume $\la \Omega(t)\ra$ is independent of time (see Proposition~\ref{eta1}), 
we get
$$
\iint_{\Omega(t)}\dydx=\la\Omega(0)\ra =h\vert \xT^d\vert,
$$
where we have used  $\int_{\xT^d}\eta(0,x)\dx=0$ to get the last equality. 

By combining the previous identities, we obtain the wanted result~\e{LH}.
\end{proof}

We are now in position to prove Corollary~\ref{C:VAC2}. 

By combining~\e{LH} with \e{vac:i}  we see that
\be\label{vac:iii}
\begin{aligned}
\int_{\xT^d}(P(t,x,-h)-gh)\dx&=
\int_{\xT^d}\frac{\gamma}{2}\big(
(G(\eta)\psi(t,x))^2+\la\nabla_x\psi(t,x)\ra^2\big)\dx\\
&\quad -g\int_{\xT^d}(\eta G(\eta)\eta) (t,x)\dx
\\
&\quad -\mez\int_{\xT^d} \vert \nabla_x \phi(t,x,-h)\vert^2\dx.
\end{aligned}
\ee
Remembering that the pressure is given by the Bernoulli  equation, that is
$$
\partial_t \phi+\mez \la \nabla_{x,y}\phi\ra^2+P+gy=0,
$$
and noticing that $\la \nabla_{x,y}\phi\ra^2(t,x,-h)=\la \nabla_{x}\phi\ra^2(t,x,-h)$ (in light of the boundary 
condition $\partial_y\phi=0$ on $y=-h$)  we end up with
\be\label{vac:iv}
\begin{aligned}
-\fract\int_{\xT^d}\phi(t,x,-h)\dx&=
\int_{\xT^d}\frac{\gamma}{2}\big(
(G(\eta)\psi(t,x))^2+\la\nabla_x\psi(t,x)\ra^2\big)\dx\\
&\quad -g\int_{\xT^d}(\eta G(\eta)\eta) (t,x)\dx,
\end{aligned}
\ee
which proves Corollary~\ref{C:VAC2}.

\subsection{Proof of Proposition~\ref{P:VAC3}}

The proof of Proposition~\ref{P:VAC3} relies on the following
\begin{prop}[from \cite{ABZ3}]\label{prop:newS} 
For an arbitrary fluid domain with finite or 
infinite depth ($h\leq +\infty$)  there holds,
\begin{align}
(\partial_{t}+V\cdot\nabla_x)\B&=\ma-g,\label{eq:B}\\
(\partial_t+V\cdot\nabla_x)V+\ma\zeta&=0,\label{eq:V}\\
(\partial_{t}+V\cdot\nabla_x)\zeta&=G(\eta)V+ \zeta G(\eta)\B ,\label{eq:zeta}
\end{align}
where $\zeta = \nabla_x \eta.$
\end{prop}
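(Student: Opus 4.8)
The plan is to derive all three identities by transporting the Eulerian equations from the bulk $\Omega$ to the free surface, the only genuinely delicate point being the equation \e{eq:zeta} for $\zeta=\nabla_x\eta$. The unifying device is the following elementary observation: if $f=f(t,x,y)$ is smooth and $\underline f(t,x)=f(t,x,\eta(t,x))$ denotes its trace on the free surface, then the chain rule together with the kinematic relation $\partial_t\eta+V\cdot\nabla_x\eta=B$ (which is exactly \e{t8}, since $B=(\partial_y\phi)\arrowvert_{y=\eta}$ and $V=(\nabla_x\phi)\arrowvert_{y=\eta}$) gives
\[
(\partial_t+V\cdot\nabla_x)\underline f=\big((\partial_t+\nabla_{x,y}\phi\cdot\nabla_{x,y})f\big)\arrowvert_{y=\eta}.
\]
In words, the surface material derivative $\partial_t+V\cdot\nabla_x$ is the trace of the bulk material derivative $\partial_t+u\cdot\nabla_{x,y}$, where $u=\nabla_{x,y}\phi$. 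First I would record this identity, which holds for any $h\le+\infty$.

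To prove \e{eq:B} and \e{eq:V} I would feed the velocity components into this identity. Differentiating Bernoulli's equation \e{t5} in $y$ gives $(\partial_t+u\cdot\nabla_{x,y})\partial_y\phi=-\partial_y P-g$; taking the trace and using $a=-(\partial_y P)\arrowvert_{y=\eta}$ yields $(\partial_t+V\cdot\nabla_x)B=a-g$, which is \e{eq:B}. Likewise, differentiating \e{t5} in $x_j$ gives $(\partial_t+u\cdot\nabla_{x,y})\partial_{x_j}\phi=-\partial_{x_j}P$, hence $(\partial_t+V\cdot\nabla_x)V_j=-(\partial_{x_j}P)\arrowvert_{y=\eta}$; and differentiating the boundary condition \e{t9}, i.e.\ $P\arrowvert_{y=\eta}=0$, in $x_j$ produces $(\partial_{x_j}P)\arrowvert_{y=\eta}=-(\partial_y P)\arrowvert_{y=\eta}\,\partial_{x_j}\eta=a\,\zeta_j$. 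Therefore $(\partial_t+V\cdot\nabla_x)V+a\zeta=0$, which is \e{eq:V}. Both steps are exact and valid in finite or infinite depth.

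For \e{eq:zeta} I would start from $\partial_t\zeta=\nabla_x\partial_t\eta=\nabla_x\big(G(\eta)\psi\big)$ and differentiate the Dirichlet--to--Neumann operator in space exactly as \e{hve:n10} differentiates it in time, i.e.\ applying the shape--derivative formula of Proposition~\ref{P:shape} to the variation $\delta\eta=\partial_{x_j}\eta=\zeta_j$, $\delta\psi=\partial_{x_j}\psi$. Since $\partial_{x_j}\psi=V_j+B\zeta_j$, the term $G(\eta)(B\zeta_j)$ cancels and one obtains the clean identity
\[
\partial_t\zeta_j=G(\eta)V_j-\cnx_x(V\zeta_j)=G(\eta)V_j-(V\cdot\nabla_x)\zeta_j-\zeta_j\,\cnx_x V,
\]
so that $(\partial_t+V\cdot\nabla_x)\zeta=G(\eta)V-\zeta\,\cnx_x V$. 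This reduces matters to identifying the last term with $\zeta\,G(\eta)B$.

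The main obstacle is thus the static identity $\cnx_x V=-G(\eta)B$. Expanding $\cnx_x V$ by the chain rule and using harmonicity of $\phi$ (so that $\cnx_x(\nabla_x\phi)=-\partial_y^2\phi$ and $\partial_y\nabla_x\phi=\nabla_x\partial_y\phi$) gives $-\cnx_x V=(\partial_y w)\arrowvert_{y=\eta}-\zeta\cdot(\nabla_x w)\arrowvert_{y=\eta}=\sqrt{1+|\nabla_x\eta|^2}\,\partial_n w\arrowvert_{y=\eta}$ with $w=\partial_y\phi$; that is, one must show that $w$ is precisely the harmonic extension of $B$ associated with $G(\eta)$. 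This is transparent in infinite depth, where $w$ decays as $y\to-\infty$ (see \e{decay:var2}) and is therefore exactly that extension, so $\cnx_x V=-G(\eta)B$ follows at once and \e{eq:zeta} is proved. The finite--depth case is the genuinely delicate one: there $w$ satisfies a homogeneous Dirichlet rather than a Neumann condition at the bottom $\{y=-h\}$, so $w$ is \emph{not} the $G(\eta)$--extension of $B$ and the identification of $\cnx_x V$ with $-G(\eta)B$ is no longer automatic; this is the point where I would follow the analysis of~\cite{ABZ3}. Substituting $\cnx_x V=-G(\eta)B$ into the displayed identity then yields \e{eq:zeta}, completing the proof.
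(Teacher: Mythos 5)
Your derivations of \e{eq:B} and \e{eq:V} are complete and correct in any depth, and your treatment of \e{eq:zeta} in infinite depth is complete as well. In fact you give substantially more than the paper, whose proof of Proposition~\ref{prop:newS} consists of a citation of Proposition~4.3 of \cite{ABZ3} plus the remark that for a flat bottom the remainder term there disappears because $(\partial_y\partial_{x_j}\phi)\arrowvert_{y=-h}=0$. Your intermediate identity
\[
(\partial_t+V\cdot\nabla_x)\zeta=G(\eta)V-\zeta\,\cnx_x V,
\]
obtained from translation invariance of $G(\eta)$ (which holds precisely because the bottom is flat) together with the shape-derivative formula of Proposition~\ref{P:shape}, is exact for every $h\le+\infty$; it is moreover exactly the identity the paper itself invokes in the proof of Proposition~\ref{P:VAC3}, where \e{virial:ordre1V} is asserted for all $h\in(0,+\infty]$. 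Up to this point your argument is a genuine, self-contained improvement on the citation.

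The gap is in your plan to close the finite-depth case of \e{eq:zeta} by establishing $\cnx_x V=-G(\eta)B$ ``following \cite{ABZ3}'': that identity is \emph{false} when $h<+\infty$, so no analysis can supply it. The paper states it, as \e{n991} in Lemma~\ref{L:31}, only for $h=+\infty$, and a one-line Fourier check on a flat surface refutes it in finite depth: for $\eta=0$ one has $V=\nabla_x\psi$ and $B=\vert D_x\vert\tanh(h\vert D_x\vert)\psi$, hence $-\cnx_x V=\vert D_x\vert^2\psi$ while $G(0)B=\vert D_x\vert^2\tanh^2(h\vert D_x\vert)\psi$, and the difference $\vert D_x\vert^2 \,\mathrm{sech}^2(h\vert D_x\vert)\psi$ does not vanish. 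As you yourself observe, $w=\partial_y\phi$ satisfies a homogeneous \emph{Dirichlet} condition at the flat bottom, so it is simply the wrong extension of $B$. Worse, since your displayed identity with $-\zeta\,\cnx_x V$ is exact for all $h$, equation \e{eq:zeta} as printed is equivalent (flat bottom) to the pointwise vanishing of $\zeta\,\bigl(\cnx_x V+G(\eta)B\bigr)$; expanding around a flat surface with $\eta=\eps\sigma$ and $\psi$ fixed, the two sides of \e{eq:zeta} differ by $\eps\,(\nabla_x\sigma)\,\vert D_x\vert^2\mathrm{sech}^2(h\vert D_x\vert)\psi+\mathcal{O}(\eps^2)$, which is generically nonzero. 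So the finite-depth case is not merely ``delicate'': your reduction shows that the form of \e{eq:zeta} containing $\zeta\, G(\eta)B$ can only be legitimate for $h=+\infty$, and the correct all-depth statement is the one you actually proved, $(\partial_t+V\cdot\nabla_x)\zeta=G(\eta)V-\zeta\,\cnx_x V$ (which, note, is all the paper uses downstream). You should either restrict your proof of \e{eq:zeta} to infinite depth or record it in the $-\zeta\,\cnx_x V$ form in finite depth; the deferral to \cite{ABZ3} cannot fill the gap as written.
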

\begin{proof}
We refer to Proposition~$4.3$ in \cite{ABZ3} for the proof. Notice that in \cite{ABZ3}, the above identities are established for a general fluid domain. Therefore in Proposition~$4.3$ of \cite{ABZ3}, the last equation is valid up to a reminder term denoted here by $\gamma$. However, for a domain with a flat bottom, 
this reminder term $\gamma$ disappears, as we can easily verify (this follows from the fact that, if $\partial_y\phi\arrowvert_{y=-h}=0$ then $(\partial_y\partial_{x_j}\phi)\arrowvert_{y=-h}=0$).
\end{proof}

The proof of \e{virial:ordre1B} is then straightforward. We integrate \e{eq:B},   then we integrate by parts and we use  \e{n991} (using the assumption $h=+\infty$) to obtain,
\begin{align*}
\fract\int_{\xT^d}B\dx
&=-\int_{\xT^d}V\cdot\nabla_x B\dx +\int_{\xT^d}(a-g)\dx,\\
&=\int_{\xT^d}(\cnx V)B\dx +\int_{\xT^d}(a-g)\dx,\\
&=\int_{\xT^d}B G(\eta)B\dx +\int_{\xT^d}(a-g)\dx.
\end{align*}

To prove \e{virial:ordre1V}  we start by 
combining the identities \eqref{n991} and \e{eq:zeta}  to deduce,
$$
\partial_t \zeta+V\cdot \nabla_x\zeta=G(\eta)V-\zeta\cnx(V), \quad \zeta = \nabla_x \eta.
$$
Now taking the scalar product of this last equation with 
$V$ we get,
$$
V\cdot\partial_t\zeta+V\cdot(V\cdot\nabla_x \zeta)
=V\cdot G(\eta)V-\zeta \cdot V(\cnx V).
$$

On the other hand, taking the scalar product of the equation~\e{eq:V} with $\zeta$  we have,
$$
\zeta\cdot \partial_t V+\zeta\cdot (V\cdot\nabla_x V)+a \la \zeta\ra^2=0.
$$
Therefore,
$$
\fract\int_{\xT^d}\zeta\cdot V\dx
=\int_{\xT^d}V\cdot G(\eta)V-\int_{\xT^d}a \la \zeta\ra^2\dx+R
$$
where,
$$
R(t)=-\int_{\xT^d}r(t,x)\dx\quad\text{with}\quad
r=V\cdot(V\cdot\nabla_x \zeta)+\zeta \cdot V(\cnx V)+\zeta\cdot (V\cdot\nabla_x V).
$$
Now observe that $r$ can be written as the divergence of a vector field. Indeed, 
$$
r=\sum_{i,j}V_iV_j\partial_j\zeta_i
+\zeta_iV_i (\partial_jV_j)+\zeta_iV_j(\partial_jV_i)
=\sum_j\partial_j\left(\sum_i \zeta_iV_j\right).
$$
This implies that $R=0$, which completes the proof. 
%\end{proof}

\section{Rayleigh-Taylor instability}\label{S:4}
We prove here Proposition~\ref{P:RT0} which we recall.

\begin{prop}\label{P:RT0p}
Assume that $g=0$ and fix $h>0$. 
There exists a constant $c>0$ such that, for all regular 
solution $(\eta,\psi)$ to the water-wave system defined 
on the time interval $[0,T]$    there holds,
\be\label{RT:0p}
\lA \eta(t)\rA_{L^2}(1+\lA \nabla_x\eta(t)\rA_{L^\infty})^\mez\ge 
\frac{c}{\sqrt{E}}\left(E t+\int_{\xT^d}\eta(0,x)\psi(0,x)\dx\right),
\ee
where 
$E=\mez \iint_{\Omega(0)}\la \nabla_{x,y}\phi(0,x,y)\ra^2\dydx$.
\end{prop}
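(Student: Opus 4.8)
The plan is to integrate the finite-depth virial identity of Theorem~\ref{T:virial} against the coercive bound of Lemma~\ref{est-psi}, exploiting the fact that in the case $g=0$ the potential energy vanishes, so that the total energy reduces to $E=E_k=\mez\iint_{\Omega(0)}|\nabla_{x,y}\phi|^2\dydx$ (and is conserved in time).

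First I would specialize the virial identity \e{MI1} to $g=0$. Then $E_p\equiv 0$, while the bottom energy $\Boundary(t)=\frac{h}{4}\int_{\xT^d}|\nabla_x\phi(t,x,-h)|^2\dx\ge 0$ is non-negative, so the identity reads
\begin{equation*}
\mez\fract\int_{\xT^d}\eta(t,x)\psi(t,x)\dx=\ME(t)+\Boundary(t)\ge \ME(t).
\end{equation*}
The key lower bound is the elementary pointwise inequality $\frac34(\partial_y\phi)^2+\frac14|\nabla_x\phi|^2\ge \frac14|\nabla_{x,y}\phi|^2$, which gives $\ME(t)\ge \frac14\iint_{\Omega(t)}|\nabla_{x,y}\phi|^2\dydx=\mez E_k=\mez E$. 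Hence $\fract\int_{\xT^d}\eta\psi\dx\ge E$, and integrating from $0$ to $t$ (using that $E$ is constant) yields
\begin{equation*}
\int_{\xT^d}\eta(t,x)\psi(t,x)\dx\ge Et+\int_{\xT^d}\eta(0,x)\psi(0,x)\dx.
\end{equation*}

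Next I would produce a matching upper bound for $\int_{\xT^d}\eta\psi\dx$. Applying Lemma~\ref{est-psi} with $\sigma=\eta(t)$ (whose mean vanishes by Definition~\ref{defi:regular}) and $f=\psi(t)$ gives
\begin{equation*}
\la\int_{\xT^d}\eta\psi\dx\ra\le C\lA\eta\rA_{\dot H^{-\mez}}\big(1+\lA\nabla_x\eta\rA_{L^\infty}\big)^\mez\Big(\int_{\xT^d}\psi G(\eta)\psi\dx\Big)^\mez.
\end{equation*}
The divergence theorem (see \e{positivityDN}) identifies $\int_{\xT^d}\psi G(\eta)\psi\dx=\iint_{\Omega(t)}|\nabla_{x,y}\phi|^2\dydx=2E_k=2E$, so the right-hand side is $\les\lA\eta\rA_{\dot H^{-\mez}}(1+\lA\nabla_x\eta\rA_{L^\infty})^\mez\sqrt{E}$. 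Since $\widehat{\eta}(0)=0$ one has $\lA\eta\rA_{\dot H^{-\mez}}\le C\lA\eta\rA_{L^2}$, which yields the stated $L^2$ version; keeping $\lA\eta\rA_{\dot H^{-\mez}}$ instead gives the sharper $\dot H^{-1/2}$ version mentioned in the remark.

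Finally, chaining the lower and upper bounds and dividing by $\sqrt{E}$ produces the desired inequality \e{RT:0p} with $c=1/C$. I do not expect a serious obstacle: the argument is a single integration of the virial identity tested against a coercive estimate, and the two nontrivial inputs are both available. The only delicate point is the upper bound, where Lemma~\ref{est-psi} is invoked; that lemma itself rests on the trace inequality of Proposition~\ref{P:C3} for harmonic functions in Lipschitz domains, which is precisely where the sharp dependence on $\lA\nabla_x\eta\rA_{L^\infty}$ (hence the factor $(1+\lA\nabla_x\eta\rA_{L^\infty})^{\mez}$ in the conclusion) enters.
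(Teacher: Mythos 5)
Your proposal is correct and follows essentially the same route as the paper: specialize the virial identity \eqref{MI1} to $g=0$, use $\Boundary\ge 0$ and the pointwise bound $\tfrac34(\partial_y\phi)^2+\tfrac14|\nabla_x\phi|^2\ge\tfrac14|\nabla_{x,y}\phi|^2$ to get $\fract\int\eta\psi\dx\ge E$, integrate in time, and close with Lemma~\ref{est-psi} together with $\int\psi G(\eta)\psi\dx=2E$ and $\lA\eta\rA_{\dot H^{-1/2}}\le C\lA\eta\rA_{L^2}$. No gaps.
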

 \begin{proof}
If $g=0$, then the potential energy vanishes and the total energy $E$ (which is constant) 
is equal to the kinetic energy. In particular, for all time $t\in [0,T]$, 
$$
\mez \iint_{\Omega(t)}\la \nabla_{x,y}\phi(t,x,y)\ra^2\dydx=\mez \iint_{\Omega(0)}\la \nabla_{x,y}\phi(0,x,y)\ra^2\dydx=E.
$$
Also, for $g=0$, the virial identity~\e{MI1} simplifies to
\begin{align*}
\mez \fract
\int_{\xT^d} \eta(t,x)\psi(t,x)\dx &= \iint_{\Omega(t)} \left(\frac{3}{4}(\partial_y \phi)^2  + \frac{1}{4} \vert \nabla_x \phi\vert^2\right)(t,x,y)\dydx\\ &\quad +\frac{h}{4}\int_{\xT^d} \vert \nabla_x \phi(t, x, -h)\vert^2\dx.
\end{align*}
It follows that,
$$
\mez \fract
\int_{\xT^d} \eta(t,x)\psi(t,x)\dx \ge \frac{1}{4}\iint_{\Omega(t)} \la\nabla_{x,y}\phi\ra^2(t,x,y)\dydx=\mez E.
$$
Since $E$ does not depend on time, by integrating in time, we obtain,  
$$
\int_{\xT^d} \eta(t,x)\psi(t,x)\dx \ge Et +\int_{\xT^d} \eta(0,x)\psi(0,x)\dx.
$$
Now, to estimate the left hand-side, we apply Lemma~\ref{est-psi}  to obtain,
$$
\la \int_{\xT^d} \eta(t,x)\psi(t,x)\dx \ra \le 
C \lA \eta(t)\rA_{\dot{H}^{-\mez}}\left(1+\lA \nabla_x \eta(t)\rA_{L^\infty}\right)^\mez\sqrt{E},
$$
for some constant $C$ depending only on the dimension. Now since $\int_{\xT^d} \eta(t,x)\, dx = 0$ we have $\lA \eta(t)\rA_{L^2(\xT^d)}\geq \lA \eta(t)\rA_{\dot{H}^{-\mez}(\xT^d)},$ so the above inequalities  imply~\e{RT:0p}.
\end{proof}

\begin{coro}
If moreover $\eta(0,\cdot)=0$  then
$$
\lA \eta(t)\rA_{L^2}(1+\lA \nabla_x\eta(t)\rA_{L^\infty})^\mez\ge 
C\lA \psi(0,\cdot)\rA_{\dot{H}^\mez}t.
$$
\end{coro}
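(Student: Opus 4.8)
The plan is to reduce everything to Proposition~\ref{P:RT0p} and then evaluate the energy $E$ on the (flat) initial domain. First I would observe that the hypothesis $\eta(0,\cdot)=0$ kills the boundary term $\int_{\xT^d}\eta(0,x)\psi(0,x)\dx$ in \eqref{RT:0p}, so that the right-hand side collapses to $\frac{c}{\sqrt{E}}\,E t=c\sqrt{E}\,t$. Thus the corollary follows at once from Proposition~\ref{P:RT0p}, provided I can bound $\sqrt{E}$ from below by a constant times $\lA \psi(0,\cdot)\rA_{\dot{H}^\mez}$.

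To obtain such a lower bound, I would use the positivity identity for the Dirichlet-to-Neumann operator (see~\e{positivityDN}), which gives
$$
2E=\iint_{\Omega(0)}\la \nabla_{x,y}\phi(0,x,y)\ra^2\dydx=\int_{\xT^d}\big(\psi\, G(\eta)\psi\big)(0,x)\dx.
$$
The key simplification is that, since $\eta(0,\cdot)=0$, the operator $G(\eta(0,\cdot))=G(0)$ is the Dirichlet-to-Neumann operator of the flat strip $\xT^d\times(-h,0)$ (or the half-space when $h=+\infty$), namely the Fourier multiplier with symbol $\la\xi\ra\tanh(h\la\xi\ra)$. By Plancherel's theorem this turns $2E$ into $\sum_{\xi\in\xZ^d}\la\xi\ra\tanh(h\la\xi\ra)\,\la\widehat{\psi}(\xi)\ra^2$, where $\widehat{\psi}$ denotes the Fourier coefficients of $\psi(0,\cdot)$.

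Comparing this with the definition \eqref{defi:homogeneous} of $\lA \cdot\rA_{\dot{H}^\mez}$, the only point to check is that the multiplier $\la\xi\ra\tanh(h\la\xi\ra)$ dominates $\la\xi\ra$ up to a constant on the relevant frequencies. This is where finite depth enters, but it is harmless on the torus: every nonzero $\xi\in\xZ^d$ satisfies $\la\xi\ra\ge 1$, hence $\tanh(h\la\xi\ra)\ge\tanh(h)>0$, while the frequency $\xi=0$ contributes nothing to either side. I therefore expect $2E\ge\tanh(h)\,\lA \psi(0,\cdot)\rA_{\dot{H}^\mez}^2$, which yields $\sqrt{E}\gtrsim\lA \psi(0,\cdot)\rA_{\dot{H}^\mez}$ and closes the argument (with the constant becoming $c/\sqrt{2}$ when $h=+\infty$, where the symbol is simply $\la\xi\ra$). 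No genuine obstacle arises here; the entire difficulty was already absorbed into Proposition~\ref{P:RT0p}, and the present step is just an explicit spectral computation on a flat reference domain.
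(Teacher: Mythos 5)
Your proposal is correct and follows essentially the same route as the paper: drop the vanishing term $\int_{\xT^d}\eta(0,x)\psi(0,x)\dx$ in \eqref{RT:0p}, identify $2E=\int_{\xT^d}\psi(0,x)G(0)\psi(0,x)\dx$ with the flat Dirichlet-to-Neumann multiplier $\la D_x\ra\tanh(h\la D_x\ra)$, and bound it below by $C\lA\psi(0,\cdot)\rA_{\dot{H}^{1/2}}^2$ using $\tanh(h\la\xi\ra)\ge\tanh(h)$ on nonzero integer frequencies. Nothing is missing.
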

\begin{proof}
Indeed, Stokes' theorem implies that
\begin{align*}
 2E&= \iint_{\Omega(0)}\la\nabla_{x,y}\phi(0,x,y)\ra^2\dydx= \int_{\partial\Omega(0)}\phi(0,\cdot) \partial_n \phi(0,\cdot)\dsigma\\
&=  \int_{\xT^d} \psi(0,x) G(0)\psi(0,x)\dx=\int_{\xT^d} \psi(0,x) \vert D_x \vert \text{th}(h \vert D_x \vert)\psi(0,x)\dx\\
&\geq C\lA \psi(0,\cdot)\rA^2_{\dot{H}^\mez}.
\end{align*}
Then we use  \eqref{RT:0p} to conclude.
\end{proof}
We now extend the previous analysis to the case $g< 0$.
\begin{prop}
Assume that $g< 0$ and fix $h\in (0,+\infty]$. 
There exists a constant $C>0$ such that, for all regular solution $(\eta,\psi)$ to the water-wave system defined on the time interval $[0,T]$, there holds
$$%\be\label{RT:g}
C\lA \eta(t)\rA_{L^{2}}\left(1+\lA \nabla_x \eta(t)\rA_{L^\infty}\right)^\mez\left(E+ \frac{|g|}{2}\lA \eta(t)\rA_{L^{2}}^2\right)^\mez\ge |E| t+ \int_{\xT^d} \eta(0,x)\psi(0,x)\dx,
$$
where $E\in \xR$ is the total energy.
\end{prop}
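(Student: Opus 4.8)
The plan is to follow exactly the strategy of the proof of Proposition~\ref{P:RT0p}, the only genuine difference being that the potential energy no longer vanishes but instead contributes a term which, because $g<0$, carries the favorable sign. I would treat $h<+\infty$ and $h=+\infty$ together, invoking \e{MI1} in the first case and the identity \e{MI2} (in which the bottom boundary term is absent) in the second.

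First I would start from the virial identity of Theorem~\ref{T:virial}, namely $\mez\fract\int_{\xT^d}\eta\psi\dx=\ME-E_p+\Boundary$, and extract a differential lower bound. Since $\Boundary\ge0$, since $\ME\ge\frac14\iint_{\Omega(t)}\la\nabla_{x,y}\phi\ra^2\dydx=\mez E_k$, and since $-E_p=\frac{|g|}2\lA\eta\rA_{L^2}^2\ge0$ for $g<0$, this yields
$$
\mez\fract\int_{\xT^d}\eta\psi\dx\ge \mez E_k+\frac{|g|}2\lA\eta\rA_{L^2}^2.
$$
Next I would replace the right-hand side by a multiple of $|E|$. Conservation of energy gives $E_k=E-E_p=E+\frac{|g|}2\lA\eta\rA_{L^2}^2$, which is in particular non-negative, so that the square root appearing in the statement is well defined. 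Setting $a=E_k\ge0$ and $b=\frac{|g|}2\lA\eta\rA_{L^2}^2\ge0$, so that $E=a-b$, the elementary inequality $\mez a+b\ge\mez\la a-b\ra$ (immediate upon distinguishing $a\ge b$ from $a<b$) gives $\mez E_k+\frac{|g|}2\lA\eta\rA_{L^2}^2\ge\mez|E|$. Hence $\fract\int_{\xT^d}\eta\psi\dx\ge|E|$, and integrating in time produces $\int_{\xT^d}\eta(t,x)\psi(t,x)\dx\ge|E|\,t+\int_{\xT^d}\eta(0,x)\psi(0,x)\dx$.

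Finally I would bound the left-hand side from above exactly as in the case $g=0$. Since $\eta$ has zero mean, Lemma~\ref{est-psi} applied with $\sigma=\eta$ and $f=\psi$ gives $\la\int_{\xT^d}\eta\psi\dx\ra\le C\lA\eta\rA_{\dot{H}^{-\mez}}(1+\lA\nabla_x\eta\rA_{L^\infty})^\mez(\int_{\xT^d}\psi G(\eta)\psi\dx)^\mez$; the divergence theorem \e{positivityDN} identifies $\int_{\xT^d}\psi G(\eta)\psi\dx=2E_k=2(E+\frac{|g|}2\lA\eta\rA_{L^2}^2)$, while $\lA\eta\rA_{\dot{H}^{-\mez}}\le\lA\eta\rA_{L^2}$ by the zero-mean condition. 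Chaining the two bounds and absorbing the factor $\sqrt2$ into the constant $C$ then gives precisely \e{RT:g}.

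The only step requiring real care, rather than a transcription of the $g=0$ argument, is the passage from $\mez E_k+\frac{|g|}2\lA\eta\rA_{L^2}^2$ to $\mez|E|$: one must \emph{retain} the potential term instead of discarding it, because $E$ may be negative and it is $|E|$, not $E$, that must appear on the right. The same quantity reappears, through $E_k=E+\frac{|g|}2\lA\eta\rA_{L^2}^2$, as the square-root factor on the left of \e{RT:g}, so keeping track of $E_k$ consistently on both sides is exactly what ties the lower and upper bounds together.
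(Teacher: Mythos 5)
Your proof is correct and follows essentially the same route as the paper: lower-bound the right-hand side of the virial identity \eqref{MI1} by $\mez E_k+\frac{|g|}{2}\lA\eta\rA_{L^2}^2$, convert this to $\mez|E|$ via conservation of energy (your elementary inequality $\mez a+b\ge\mez|a-b|$ is just the paper's case distinction $E\ge 0$ versus $E<0$ in disguise), integrate in time, and close with Lemma~\ref{est-psi} together with $\int\psi G(\eta)\psi\dx=2(E+\frac{|g|}{2}\lA\eta\rA_{L^2}^2)$. No gaps.
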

\begin{rema}
The total energy is constant in time, but not necessarily 
non-negative when $g<0$. However the quantity 
$E+ \frac{|g|}{2}\lA \eta(t)\rA_{L^{2}}^2$ is non-negative in light of 
\e{RT:4} below, so that the square root is well-defined even for $E<0$.
\end{rema}
\begin{proof}[Proof of Proposition \ref{P:RT0}]
Since $g=-\la g\ra$   by assumption, the virial identity~\e{MI1} implies that,
\begin{align}
\mez \fract
\int_{\xT^d} \eta(t,x)\psi(t,x)\dx 
&\ge \frac{1}{4}
\iint_{\Omega(t)} \la \nabla_{x,y} \phi\ra^2(t,x,y)\dydx+
\frac{\la g\ra}{2}\int_{\xT^d}\eta^2(t,x)\dx.
\end{align}
On the other hand, 
for $g<0$  the conservation of the total energy reads,
\be\label{RT:4}
\mez\iint_{\Omega(t)} \la \nabla_{x,y} \phi\ra^2(t,x,y)\dydx
=E+\frac{\la g\ra }{2}\int_{\xT^d}\eta^2(t,x)\dx.
\ee
It follows from \e{RT:4} that 
\begin{alignat*}{3}
&\text{If }E<0 \quad &&\text{then}\quad &&
\frac{| g|}{2}\int_{\xT^d}\eta^2(t,x)\dx\ge  -E=|E|,\\
&\text{If }E\ge 0 \quad &&\text{then}\quad &&
\mez\iint_{\Omega(t)} \la \nabla_{x,y} \phi\ra^2(t,x,y)\dydx\ge  E=|E|.
\end{alignat*}
Consequently,
$$
\mez \fract
\int_{\xT^d} \eta(t,x)\psi(t,x)\dx 
\ge \mez |E|.
$$

Integrating in time  this gives,
\begin{align}
\int_{\xT^d} \eta(t,x)\psi(t,x)\dx 
&\ge  |E| t  + \int_{\xT^d} \eta(0,x)\psi(0,x)\dx .
\end{align}
Remembering that,
$$
\int_{\xT^d} (\psi G(\eta)\psi)(t,x)\dx
=\iint \la\nabla_{x,y}\phi(t,x,y)\ra^2\dydx
=2E+\la g\ra\int_{\xT^d}\eta^2(t,x)\dx,
$$
we conclude the proof by using Lemma~\ref{est-psi}.
\end{proof}
%%%%%%%%%%%%%%%%%%%%%%%%%%%%%%%%%%%%%%%%%

\section{The example of standing waves}\label{S:5}

As an illustration, we are going to compute the mean potential energy and the mean 
modified kinetic energy for a fundamental example of water-waves. 
Namely we are going to consider standing gravity waves (in infinite depth). 
Boussinesq was the first to consider the question of the existence of  such waves, 
already in 1877. The question of the existence of these waves 
has remained open for a very long time. 
The first rigorous existence result of {\em exact} solutions 
is due to Plotnikov and Toland~\cite{PlTo}. This work was extended by 
Iooss, Plotnikov and Toland~(\cite{IPT,IP-SW1}). 

The study of the equipartition of energy for standing waves was already considered by 
Mack and Jay~\cite{MackJay}. 
We will follow in this section their notations 
and express the water-waves equations in convenient 
dimensionless variables. The solutions are of the form 
$\phi= \phi(t,x,y ,\eps)$, $\eta= \eta(t,x,\eps)$ and the angular frequency is 
$\omega = \omega(\eps)$ 
where $ \eps >0$ measures the amplitude of the waves, which are $2\pi$-periodic in time (so that $t \in (0, 2\pi)$). 
The equations read
\begin{equation}\label{KT}
\begin{aligned}
&(i) \quad  \Delta_{x,y} \phi = 0, \quad \text{ in } \{0\leq x \leq \pi, \,  y \leq \eps \eta(t,x, \eps)\}, \quad X = (x,y),\\
&(ii)\quad \eta + \omega \partial_t \phi +  \mez \eps\big((\partial_x\phi) ^2 + (\partial_y\phi) ^2) = 0, \quad \text{on } y = \eps \eta(t,x, \eps),\\
& (iii)\quad \omega \partial_t \eta -\partial_y \phi   + \eps \partial_x \phi \partial_x\eta = 0 \quad  \text{on } y = \eps \eta(t,x, \eps),\\
%&(iv)\quad \frac{\partial \phi}{\partial n} = 0, \quad \text{ on } x= 0, \, y=0 \, \text{ and  } y = -h,\\
&(iv)\quad  \int_0^\pi \eta(t,x)\dx = 0,\\
& (v)\quad \nabla_{x,y} \phi (t+ 2 \pi, x,y, \eps) =   \nabla_{x,y} \phi (t, x,y, \eps). 
\end{aligned}
 \end{equation}
(Standing waves are $2\pi$-periodic in $x$ 
and symmetric ($x\mapsto \eta(t,x)$ is even) so that one can reduce the analysis to 
$x\in [0,\pi]$.) 
 
Mack, Jay et Sattler consider approximate solutions of the form,
\begin{align*}
&  \phi(t,x,y,\eps)=    \phi^{(0)}(t,x,y) + \eps   \phi^{(1)}(t,x,y) +   \eps^2  \phi^{(2)}(t,x,y) + \mathcal{O}(\eps^3),\\
& \eta(t,x,\eps) = \eta^{(0)}(t,x) + \eps \eta^{(1)}(t,x) +  \eps^2\eta^{(2)}(t,x)+ \mathcal{O}(\eps^3),\\
&\omega(\eps) = \omega_0 + \eps \omega_1 +  \eps^2 \omega_2+\mathcal{O}(\eps^3),
\end{align*}
where,
\begin{align} 
&\phi^{(0)} =-   \sin t \cos x \, e^y,\quad
\eta^{(0)} = \cos t \cos x, \quad \omega_0 = 1,\label{sol0}\\
&\phi^{(1)}  = \alpha(t),    \quad \eta^{(1)}  = \frac{1}{2}   (\cos  t )^2 \cos (2x),  \quad\omega_1= 0.\label{sol1}
\end{align}
and
\begin{equation}\label{sol2}
\begin{aligned}
\phi^{(2)} &=    A^{(2)}_{13} \sin t \, \cos(3x)\, e^{3y} + \frac{5}{32}\sin (3t)\cos x \,e^y  
 + A^{(2)}_{33} \sin(3t) \cos(3x) e^{3y},\\
 \eta^{(2)} & = \frac{3}{32} \cos t \cos x + b_{13} \cos t\cos (3x)  -\frac{1}{16} \cos(3t) \cos x + b_{33} \cos(3t)\cos(3x),\\
  \omega_2 &= -\frac{1}{8}, 
\end{aligned}
 \end{equation}
 where $ \alpha(t)$ is a function depending only on $t$ and $A^{(2)}_{13}, A^{(2)}_{33}, b_{13},b_{33}$ are real  numbers.
 
 We shall verify on this solution the equality in    Corollary \ref{C:virial}    in the case of infinite depth, modulo $\eps^3$ that is,  
 \begin{equation}\label{rep-energ}
 \int_0^{2\pi} \int_0^\pi \int_{-\infty}^{\eps \eta} \big\{\frac{3}{2} \big(\phi_y\big)^2 + \frac{1}{2} \big(\phi_x\big)^2 \big\}(t,x,y)\dy\dx\dt = 2 \int_0^{2\pi} E_p(t)\dt + \mathcal{O}(\eps^3).
 \end{equation}
\subsection*{Computation of the modified kinetic  energy.}
 Setting $f_x = \partial_x f, f_y = \partial_y f,$ we have,  
\begin{equation}\label{derphi}
\begin{aligned}
\phi^{(0)}_x &=   \sin t \sin x \, e^y,\quad 
\phi^{(0)}_y = -  \sin t \cos x \, e^y, \\
\phi^{(1)}_x &= 0, \quad
\phi^{(1)}_y = 0  ,\\
\phi^{(2)}_x&=  -3A^{(2)}_{13} \sin t \, \sin (3x)\,e^{3y}  - \frac{5}{32}\sin (3t)\sin  x \, e^{y} 
  -3 A^{(2)}_{33} \sin(3t) \sin(3x) e^{3y},\\ 
   \phi^{(2)}_y &= 3A^{(2)}_{13} \sin t \, \cos(3x)\,e^{3y}  +  \frac{5}{32}\sin (3t)\cos x \, e^{y} 
  + 3A^{(2)}_{33} \sin(3t) \cos(3x)e^{3y}.\\
   \end{aligned}
 \end{equation}
Then,
\begin{align*}
 (\phi_x)^2 &= (\phi^{0}_x)^2  +2 \eps^2 \big(    \phi^{(0)}_x  \, \phi^{(2)}_x \big) + \mathcal{O}(\eps^3),\\
 (\phi_y)^2 &= (\phi^{0}_y)^2    + 2\eps^2 \big(   \phi^{(0)}_y  \, \phi^{(2)}_y \big) + \mathcal{O}(\eps^3).
 \end{align*}
Set,
 \begin{equation}\label{phi-x}
 \begin{aligned}
 A_1 &= \int_0^\pi \int_{-\infty}^{\eps \eta} (\phi^{0}_x)^2(t,x,y)\dy\dx\\
 A_2 &=  2\eps^2 \int_0^\pi \int_{-\infty}^{\eps \eta}\big(    \phi^{(0)}_x  \, \phi^{(2)}_x \big)(t,x,y)\dy\dx
 \end{aligned}
 \end{equation}
 We have, 
$$ A_1 = \int_0^\pi(\sin x)^2\Big(\int_{-\infty}^{\eps \eta} e^{2y}\dy\Big)\dx (\sin t)^2= \mez \Big(\int_0^\pi(\sin x)^2 e^{2\eps \eta}dx\Big) (\sin t)^2.$$
 We have, modulo $\mathcal{O}(\eps^3)$,
 \begin{equation}\label{e^eta}
 \begin{aligned}
   e^{2\eps \eta} &\equiv 1+ 2\eps \eta + 2 \eps^2 \eta^2  \equiv 1+ 2\eps \eta^{(0)} + 2\eps^2( (\eta^{(0)})^2 +  \eta^{(1)}),\\
   &\equiv 1+ 2 \eps  \cos t \cos x + 2 \eps^2\big\{(\cos t)^2 ( \cos x)^2 + \frac{1}{2}(\cos t)^2\cos(2x)\big\}.
   \end{aligned}  
   \end{equation} 
  Since,
   \begin{align*}
    &\int_0^\pi (\sin x)^2  \dx = \frac{\pi}{2}, \quad \int_0^\pi (\sin x)^2 \cos x \dx = 0, \quad \int_0^\pi (\sin x)^2 (\cos x)^2 \dx = \frac{\pi}{8},\\
    &\int_0^\pi (\sin x)^2 \cos (2x)\dx  = - \frac{\pi}{4}
   \end{align*} we obtain,
   $$A_1 =\frac{\pi}{4}(\sin t)^2 +   \eps^2\big\{ \frac{\pi}{8} (\cos t)^2(\sin t)^2 - \frac{\pi}{8}(\cos t)^2(\sin t)^2  \big\} = \frac{\pi}{4}(\sin t)^2.$$
    \begin{equation}\label{A1}
    A_1 =\frac{\pi}{4}(\sin t)^2    + \mathcal{O}(\eps^3). 
    \end{equation}
    In   computing   $A_2$ we shall write $e^{n\eps \eta } = 1 + \mathcal{O}(\eps)$. Since, 
     $\int_0^\pi \sin x \sin (3x)\dx = 0,$ 
    $$A_2 =  \eps^2 \Big(\int_0^\pi - \frac{5}{32} (\sin x)^2\dx\Big) \sin t \sin (3t) + \mathcal{O}(\eps^3)= - \frac{5\pi}{64}\eps^2\sin t \sin (3t) + \mathcal{O}(\eps^3).$$
    Since $\sin t \sin (3t) = 3 (\sin t)^2 - 4 (\sin t)^4$ we have,
    \begin{equation}\label{A2}
    A_2 = -\eps^2\big\{ \frac{15\pi}{64} (\sin t)^2 - \frac{5\pi}{16} (\sin t)^4\big\}+ \mathcal{O}(\eps^3).
    \end{equation} 
    It follows from \eqref{A1},   \eqref{A2} that,
    \begin{equation}\label{phix}
     \int_0^\pi \int_{-\infty}^{\eps \eta} (\phi_x)^2(t,x,y)\dy\dx = \frac{\pi}{4}(\sin t)^2  -\eps^2\big\{ \frac{15\pi}{64} (\sin t)^2 - \frac{5\pi}{16} (\sin t)^4\big\}+ \mathcal{O}(\eps^3).
    \end{equation}

    Set,
 \begin{equation*} 
 \begin{aligned}
 B_1 &= \int_0^\pi \int_{-\infty}^{\eps \eta} (\phi^{0}_y)^2(t,x,y)\dy\dx,\\
 B_2 &=  2\eps^2 \int_0^\pi \int_{-\infty}^{\eps \eta}\big(    \phi^{(0)}_y  \, \phi^{(2)}_y \big)(t,x,y)\dy\dx.
 \end{aligned}
 \end{equation*}
 We have, 
 $$B_1 =  \int_0^\pi(\cos x)^2\Big(\int_{-\infty}^{\eps \eta} e^{2y}\dy\Big)\dx (\sin t)^2= \mez \Big(\int_0^\pi(\cos x)^2 e^{2\eps \eta}dx\Big) (\sin t)^2.$$
 Using \eqref{e^eta} and the fact that,
 \begin{align*}
  &\int_0^\pi(\cos x)^2\dx = \frac{\pi}{2}, \quad \int_0^\pi(\cos x)^3\dx =0,  \quad \int_0^\pi(\cos x)^4\dx = \frac{3\pi}{8},\\ &\int_0^\pi(\cos x)^2\cos(2x)\dx =\frac{\pi}{4},
 \end{align*}
 we obtain, 
 $$B_1 = \frac{\pi}{4} (\sin t)^2 + \eps^2 \big\{ \frac{3\pi}{8}(\sin t)^2(\cos t)^2 +\frac{\pi}{8}(\sin t)^2(\cos t)^2\big\} + \mathcal{O}(\eps^3),$$
 \begin{equation}\label{B1}
 B_1 =  \frac{\pi}{4} (\sin t)^2  + \frac{\pi}{2} \eps^2( (\sin t)^2- (\sin t)^4)+\mathcal{O}(\eps^3). \end{equation}
 In   computing   $B_2$ we   write $e^{n\eps \eta } = 1 + \mathcal{O}(\eps)$. Since, 
     $\int_0^\pi \cos x \cos(3x)\dx = 0$  we get,
     $$B_2 = -\frac{5}{32} \int_0^\pi(\cos x)^2\dx (\sin t)(\sin 3t) + \mathcal{O}(\eps^3)= - \frac{5\pi}{64}\eps^2\sin t \sin (3t) + \mathcal{O}(\eps^3),$$
so,
\begin{equation}\label{B2}
B_2 = -\eps^2\big\{ \frac{15\pi}{64} (\sin t)^2 - \frac{5\pi}{16} (\sin t)^4\big\}+ \mathcal{O}(\eps^3).
\end{equation}
It follows from\eqref{B1},  \eqref{B2} that,
\begin{equation}\label{phiy}
 \int_0^\pi \int_{-\infty}^{\eps \eta} (\phi_y)^2(t,x,y)\dy\dx = \frac{\pi}{4}(\sin t)^2 + \eps^2\big\{ \frac{17\pi}{64}(\sin t)^2 - \frac{3 \pi}{16}(\sin t)^2\big\}.
\end{equation}
We deduce from  \eqref{phix} and \eqref{phiy} that,
\begin{equation}\label{phiy+phix}
  \int_0^\pi \int_{-\infty}^{\eps \eta} \Big\{ \frac{3}{2}(\phi_y)^2 + \frac{1}{2}(\phi_x)^2\Big\}(t,x,y)\dy\dx=\frac{\pi}{2}(\sin t)^2 + \pi \eps^2 \Big\{ \frac{9}{32}(\sin t)^2 - \frac{1}{8} (\sin t)^4\Big\}.
  \end{equation}
Since, $\int_0^{2\pi} (\sin t)^2\dt = \pi $ et  $\int_0^{2\pi} (\sin t)^4\dt = \frac{3\pi}{4}$ we get,
\begin{equation}\label{EM}
 \int_0^{2\pi}\int_0^\pi \int_{-\infty}^{\eps \eta} \Big\{ \frac{3}{2}(\phi_y)^2 + \frac{1}{2}(\phi_x)^2\Big\}(t,x,y)\dy\dx = \frac{\pi^2}{2} + \frac{3\pi^2}{16} \eps^2 + \mathcal{O}(\eps^3).
\end{equation}

\subsection*{Computation of the potential  energy }
Recall that,
\begin{align*}
 2E_p(t) &=   \int_0^\pi (\eta(t,x))^2\dx = P_0(t) + \eps P_1(t) + \eps^2 P_2(t) +  \mathcal{O}(\eps^3),\\
 P_0(t) &=    \int_0^\pi  (\eta^{(0)}(t,x))^2 \dx,\\
 P_1(t) &=    2\int_0^\pi (\eta^{(0)} \eta^{(1)})(t,x)\dx,\\
 P_2(t)&=    \int_0^\pi \big\{(\eta^{(1)})^2 +  2\eta^{(0)} \eta^{(2)}\big\} (t,x) \dx.
  \end{align*}
 From  \eqref{sol0}  we have,
 \begin{equation}\label{P0}
  P_0(t) = \Big(\int_0^\pi (\cos x)^2\dx \Big)(\cos t)^2 = \frac{\pi}{2}(\cos t)^2.
  \end{equation}
Now,
 \begin{equation}\label{P1}
 P_1(t) =\frac{1}{2} (1+  \cos (2t))\cos t  \Big(\int_0^\pi \cos x \cos(2x)\dx\Big)= 0.
 \end{equation} 
On the other hand, since, $\int_0^\pi\cos(2x))^2\dx = \frac{\pi}{2}$  we have, 
$$
  \int_0^\pi  (\eta^{(1)}(t,x))^2 \dx  = \frac{\pi}{32}\big\{1+  \cos(2t)\big\}^2, 
   = \frac{\pi}{32}\big\{1 - (\sin t)^2\big\}^2  
$$
 Now, since, $\int_0^\pi\cos x \cos(3x)\dx = 0$ and $ \int_0^\pi(\cos x)^2\dx = \frac{\pi}{2}$ we get,
 $$  2\int_0^\pi (\eta^{(0)} \eta^{(2)})(t,x)   \dx =  \pi \big\{ \frac{3}{32} (\cos t)^2   - \frac{1}{16} \cos t \cos(3t) \big\}.$$
  \begin{equation}\label{P2}
 \begin{aligned}
 P_2(t) =  \frac{\pi}{32}\big\{1-  (\sin t)^2\big\}^2 +  \pi \big\{ \frac{3}{32} (\cos t)^2   - \frac{1}{16} \cos t \cos(3t) \big\}. 
 \end{aligned}
  \end{equation}
  It follows from \eqref{P0}, \eqref{P1}, \eqref{P2} that,
 \begin{equation}\label{Ep-infini}
 2E_p(t)=  \frac{\pi}{2} - \frac{\pi}{2}(\sin t)^2 + \eps^2\Big\{ \frac{5\pi}{32} - \frac{\pi}{32}(\sin t)^2 - \frac{\pi}{8}( \sin t )^4 \Big\} +  \mathcal{O}(\eps^3).
 \end{equation}
Therefore,
\begin{equation}\label{Ep-M}
 2 \int_0^{2\pi} E_p(t)\dt = \frac{\pi^2}{2} + \frac{3 \pi^2}{16}\eps^2+ \mathcal{O}(\eps^3).
 \end{equation}
 It follows from \eqref{EM} and \eqref{Ep-M} that the equality \eqref{rep-energ} is satisfied modulo $\mathcal{O}(\eps^3).$

 \appendix
 
\section{The Craig-Sulem-Zakharov formulation}\label{Appendix:DN}

For the reader convenience, we recall in this appendix various well-known results and identities.

\subsection{The Dirichlet-to-Neumann operator}
In this paragraph the time variable is seen as a parameter and we skip it. 

Consider a smooth function $\eta\in C^{\infty}(\xT^d)$ 
and a function $\psi$ in the Sobolev 
space $H^{\mez}(\xT^d)$. 
Then it follows from classical arguments that there is a 
unique variational solution 
$\phi$ to the problem
\be\label{defi:phi}
\Delta_{x,y}\phi=0 \quad\text{in }\Omega=\{-h<y<\eta(x)\},\quad 
\phi\arrowvert_{y=\eta}=\psi,\quad \partial_y\phi\arrowvert_{y=-h}=0.
\ee
By construction, the variational solution is such that $\nabla_{x,y}\phi\in L^2(\Omega)$, 
so it is not obvious that one can consider the trace 
$\partial_n \phi\arrowvert_{\partial\Omega}$. However, 
since $\Delta_{x,y}\phi=0$, one can express the normal 
derivative in terms of the tangential derivatives 
and prove that 
$\sqrt{1+|\nabla_x \eta|^2}\partial_n \phi\arrowvert_{\partial\Omega}$ 
is well-defined and belongs to $H^{-\mez}(\xT^d)$. 
As a result, one can define the Dirichlet-to-Neumann operator $G(\eta)$ by
$$
G(\eta)\psi (x)=\sqrt{1+|\nabla_x \eta|^2}\partial_n\phi\arrowvert_{y=\eta(x)}
=\partial_y\phi(x,\eta(x))-\nabla_x \eta(x)\cdot\nabla_x\phi(x,\eta(x)).
$$
Let us recall two results. Firstly, 
it follows from classical elliptic regularity results that, for any 
$s\ge 1/2$, $G(\eta)$ is bounded from $H^s(\xT^d)$ into 
$H^{s-1}(\xT^d)$. This property still holds in 
the case where $\eta$ has limited regularity. 
For any $s>d/2+1$ we have
\be\label{DN:Sobolev}
\lA G(\eta)\psi\rA_{H^{s-1}}\le C\big(\lA \eta\rA_{H^{s}}\big)\lA \psi\rA_{H^{s}}.
\ee
In particular, if $\eta$ and $\psi$ belong to $C^\infty(\xT^d)$, then $G(\eta)\psi\in C^\infty(\xT^d)$.

For our purpose  a fundamental fact is that $G(\eta)$ is a positive operator. Indeed, it follows from Stokes' theorem that
\be\label{positivityDN}
\int_{\xT^d} \psi G(\eta)\psi\dx=\int_{\partial\Omega}\phi \partial_n \phi\dsigma=
\iint_{\Omega}\la\nabla_{x,y}\phi\ra^2\dydx\ge 0.
\ee

In addition to the Dirichlet-to-Neumann operator, 
we introduce the functions $B,V$ defined by,
$$
B =\partial_y \phi\arrowvert_{y=\eta}, \quad
V =(\nabla_x \phi)\arrowvert_{y=\eta},
$$
where again $\phi$ is the harmonic extension of $\psi$ given by \e{defi:phi}. 

We recall the following identities. 

\begin{lemm}\label{L:31}
We have
\be\label{n897}
B =\frac{G(\eta)\psi+\nabla_x \eta\cdot\nabla_x \psi}{1+\la \nabla_x \eta\ra^2},\qquad 
V =\nabla_x \psi-B  \nabla_x \eta.
\ee
Moreover if $h=+\infty$ then
\be\label{n991}
G(\eta)B=-\cnx V.
\ee
\end{lemm}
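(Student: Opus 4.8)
The plan is to establish the two formulas in \e{n897} by elementary calculus and then to deduce \e{n991} from the harmonicity of $\partial_y\phi$. First I would differentiate the relation $\psi(x)=\phi(x,\eta(x))$ in $x$. The chain rule gives
$$\nabla_x\psi=(\nabla_x\phi)\arrowvert_{y=\eta}+(\partial_y\phi)\arrowvert_{y=\eta}\,\nabla_x\eta=V+B\nabla_x\eta,$$
which is exactly the second identity in \e{n897}. To obtain the first one, I would insert the definition of the Dirichlet-to-Neumann operator,
$$G(\eta)\psi=(\partial_y\phi)\arrowvert_{y=\eta}-\nabla_x\eta\cdot(\nabla_x\phi)\arrowvert_{y=\eta}=B-\nabla_x\eta\cdot V,$$
and substitute $V=\nabla_x\psi-B\nabla_x\eta$. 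This yields $G(\eta)\psi=B(1+\la\nabla_x\eta\ra^2)-\nabla_x\eta\cdot\nabla_x\psi$, and solving for $B$ produces the claimed expression.

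For \e{n991}, the key observation is that $w\defn\partial_y\phi$ is itself harmonic in $\Omega$, since $\Delta_{x,y}\partial_y\phi=\partial_y\Delta_{x,y}\phi=0$, and that its trace on the free surface is precisely $w\arrowvert_{y=\eta}=B$. I would then argue that, in infinite depth, $w$ is the \emph{variational} harmonic extension of $B$, i.e.\ the function entering the definition of $G(\eta)B$. Granting this, I would compute
$$G(\eta)B=(\partial_yw)\arrowvert_{y=\eta}-\nabla_x\eta\cdot(\nabla_xw)\arrowvert_{y=\eta},$$
and use the harmonicity of $\phi$ in the form $\partial_y^2\phi=-\Delta_x\phi$ to rewrite $\partial_yw=-\Delta_x\phi$ and $\nabla_xw=\partial_y\nabla_x\phi$. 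On the other hand, differentiating $V_j(x)=(\partial_{x_j}\phi)(x,\eta(x))$ gives, again by the chain rule,
$$\cnx V=\sum_j(\partial_{x_j}^2\phi)\arrowvert_{y=\eta}+\nabla_x\eta\cdot(\partial_y\nabla_x\phi)\arrowvert_{y=\eta}=(\Delta_x\phi)\arrowvert_{y=\eta}+\nabla_x\eta\cdot(\partial_y\nabla_x\phi)\arrowvert_{y=\eta}.$$
Comparing the two displays shows term by term that $G(\eta)B=-\cnx V$.

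The main obstacle is the justification that $w=\partial_y\phi$ really is the variational solution used to define $G(\eta)B$, and this is exactly where the hypothesis $h=+\infty$ enters. One must check that $\nabla_{x,y}\partial_y\phi\in L^2(\Omega)$, which follows from the exponential decay of $\phi$ and its derivatives as $y\to-\infty$ established in the proof of \e{decay:var2}, together with uniqueness of the variational solution with prescribed trace. By contrast, in finite depth $w$ does \emph{not} satisfy the Neumann condition $\partial_yw\arrowvert_{y=-h}=0$ — indeed $\partial_yw\arrowvert_{y=-h}=-\Delta_x\phi\arrowvert_{y=-h}$ need not vanish — so $w$ fails to be the harmonic extension of $B$ and the identity breaks down, consistently with the restriction to $h=+\infty$ in the statement.
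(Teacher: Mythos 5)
Your derivation of \eqref{n897} is exactly the paper's: apply the chain rule to $\psi=\phi\arrowvert_{y=\eta}$ to get $\nabla_x\psi=V+B\nabla_x\eta$, insert the definition $G(\eta)\psi=B-\nabla_x\eta\cdot V$, and solve for $B$. For \eqref{n991} the situation is different: the paper gives no proof at all and simply cites \cite{ABZ3,BLS,LannesJAMS}, whereas you supply a complete argument. Your argument is the standard one and is correct: $w=\partial_y\phi$ is harmonic with trace $B$ on the free surface, so once it is identified with the variational extension defining $G(\eta)B$, the term-by-term comparison of $G(\eta)B=(\partial_yw-\nabla_x\eta\cdot\nabla_xw)\arrowvert_{y=\eta}$ with $\cnx V=(\Delta_x\phi+\nabla_x\eta\cdot\partial_y\nabla_x\phi)\arrowvert_{y=\eta}$, using $\partial_y^2\phi=-\Delta_x\phi$, gives the identity. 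You rightly isolate the identification of $w$ with the variational extension as the one point requiring justification; for a regular solution ($s>2+d/2$) the bound $\nabla_{x,y}\partial_y\phi\in L^2(\Omega)$ does hold (elliptic regularity in a neighbourhood of the surface, plus the exponential decay as $y\to-\infty$ established in the paper's proof of \eqref{decay:var2}), and uniqueness of the finite-Dirichlet-energy harmonic function with prescribed surface trace follows from the same Fourier-mode analysis. Your closing remark on finite depth is also on target: there $w$ vanishes identically on $\{y=-h\}$ (a Dirichlet condition) rather than satisfying the Neumann condition built into $G(\eta)$, so $w$ is not the relevant extension and \eqref{n991} acquires a bottom correction, consistent with the paper only invoking \eqref{n991} under the hypothesis $h=+\infty$.
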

\begin{proof}
By definition of the operator $G(\eta)$, we have
\be\label{n997}
G(\eta)\psi=\big( \partial_y \phi-\nabla_x \eta\cdot \nabla_x \phi\big)\arrowvert_{y=\eta}
=B-\nabla_x \eta\cdot 
V. 
\ee
On the other hand  it follows from the chain rule that
$$
\nabla_x \psi=\nabla_x (\phi\arrowvert_{y=\eta})=(\nabla_x \phi)\arrowvert_{y=\eta}
+(\partial_y\phi)\arrowvert_{y=\eta}
\nabla_x \eta=V+B\nabla_x \eta. 
$$
Consequently  we obtain the wanted identity for 
$V$: 
$$
V=\nabla_x \psi -B\nabla_x \eta.
$$
Now, by reporting this formula in \e{n997} we get
$$
G(\eta)\psi=(1+\la \nabla_x \eta\ra^2)
B-\nabla_x\psi \cdot\nabla_x \eta,
$$
which immediately implies the desired result for $B$. 

The identity \e{n991} is proved in~\cite{ABZ3,BLS,LannesJAMS}.
\end{proof}

Notice that $G(\eta)\psi$ depends linearly 
in $\psi$ but nonlinearly in $\eta$. 
This is one of the main difficulty in the  study of 
water-waves equations. One key tool to study its dependence in $\eta$ is given by the following result.
\begin{prop}[from Lannes~\cite{LannesJAMS,LannesLivre}]\label{P:shape}
Let $s$ be such that $s>1+d/2$. 
Let $\psi \in H^s(\xT^d)$ and $\eta_0\in H^{s} (\xT^d)$. 
Then there is a neighborhood $\mathcal{U}_{\eta_0}\subset H^{s}(\xT^d)$ of $\eta_0$ such that
the mapping
$$
\eta\in \mathcal{U}_{\eta_0} \mapsto G(\eta)\psi \in H^{s-1}(\xT^d)
$$
is differentiable. Moreover, for all $\zeta\in H^s(\xT^d)$, we have
\be\label{n:shape}
d G(\eta)\psi \cdot  \zeta \defn
\lim_{\eps\rightarrow 0} \frac{1}{\eps}\big\{ G(\eta+\eps \zeta)\psi -G(\eta)\psi\big\}
= -G(\eta)(\mathcal{B}\zeta) -\cnx (\mathcal{V}\zeta),
\ee
where
$$
\mathcal{B}=\frac{G(\eta)\psi+\nabla_x \eta\cdot\nabla_x\psi}{1+\la \nabla_x \eta\ra^2},
\quad \mathcal{V} =\nabla_x\psi-\mathcal{B}\nabla_x \eta.
$$
\end{prop}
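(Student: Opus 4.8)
The plan is to reduce the moving-boundary problem to a fixed domain and then differentiate. First I would straighten the fluid domain: fix a diffeomorphism $(x,z)\mapsto(x,\rho(x,z))$ sending the flat strip $\mathcal{S}=\{(x,z):-h<z<0\}$ onto $\Omega_\eta=\{-h<y<\eta(x)\}$, chosen so that $\rho(x,0)=\eta(x)$, $\rho(x,-h)=-h$, and $\rho$ depends smoothly (in fact affinely, after the standard regularization) on $\eta$. Setting $\tilde\phi(x,z)=\phi(x,\rho(x,z))$, the harmonicity of $\phi$ becomes a second-order elliptic equation $\cnx_{x,z}(A[\eta]\nabla_{x,z}\tilde\phi)=0$ on the fixed domain $\mathcal{S}$, with Dirichlet data $\tilde\phi\arrowvert_{z=0}=\psi$, a Neumann condition at $z=-h$, and coefficients $A[\eta]$ that are smooth functions of $\nabla_x\eta$ and of $\rho$. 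Since the domain is now independent of $\eta$, standard elliptic theory together with the implicit function theorem shows that $\eta\mapsto\tilde\phi$ is $C^1$ (indeed $C^\infty$) from a neighborhood $\mathcal{U}_{\eta_0}\subset H^s(\xT^d)$ into the natural energy space, and that $G(\eta)\psi$, which can be written as a conormal derivative of $\tilde\phi$ at $z=0$, is a differentiable function of $\eta$ with values in $H^{s-1}(\xT^d)$. This establishes the differentiability claim.

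It remains to identify the derivative, which I would do by a shape-derivative argument. Write $\eta_\eps=\eta+\eps\zeta$, let $\phi_\eps$ be the harmonic extension of $\psi$ in $\Omega_{\eta_\eps}$, and introduce the shape derivative $\dot\phi=\partial_\eps\phi_\eps\arrowvert_{\eps=0}$, evaluated at a fixed interior point $(x,y)\in\Omega_\eta$ (rigorously obtained by differentiating $\tilde\phi$ and transporting back through $\rho$). Differentiating the interior equation gives $\Delta_{x,y}\dot\phi=0$, and differentiating the bottom condition gives $\partial_y\dot\phi\arrowvert_{y=-h}=0$. The decisive point is the top boundary: differentiating the identity $\phi_\eps(x,\eta_\eps(x))=\psi(x)$ in $\eps$ yields $\dot\phi(x,\eta(x))+(\partial_y\phi)(x,\eta(x))\zeta(x)=0$, that is $\dot\phi\arrowvert_{y=\eta}=-B\zeta$ with $B=\partial_y\phi\arrowvert_{y=\eta}$. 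Hence $\dot\phi$ is exactly the harmonic extension of the Dirichlet datum $-B\zeta$, so that by the very definition of the Dirichlet-to-Neumann operator $(\partial_y\dot\phi-\nabla_x\eta\cdot\nabla_x\dot\phi)\arrowvert_{y=\eta}=G(\eta)(-B\zeta)$.

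Finally I would differentiate the defining formula $G(\eta_\eps)\psi=(\partial_y\phi_\eps-\nabla_x\eta_\eps\cdot\nabla_x\phi_\eps)\arrowvert_{y=\eta_\eps(x)}$ at $\eps=0$; the derivative receives three contributions. The variation of $\phi_\eps$ produces $(\partial_y\dot\phi-\nabla_x\eta\cdot\nabla_x\dot\phi)\arrowvert_{y=\eta}=-G(\eta)(B\zeta)$ by the previous step; the variation of $\nabla_x\eta_\eps$ produces $-\nabla_x\zeta\cdot V$ with $V=\nabla_x\phi\arrowvert_{y=\eta}$; and the variation of the evaluation height $\eta_\eps(x)$ produces $\partial_y(\partial_y\phi-\nabla_x\eta\cdot\nabla_x\phi)\arrowvert_{y=\eta}\,\zeta=(\partial_y^2\phi-\nabla_x\eta\cdot\partial_y\nabla_x\phi)\arrowvert_{y=\eta}\,\zeta$. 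For the last term I would use harmonicity: differentiating $V_j(x)=\partial_{x_j}\phi(x,\eta(x))$ gives $\cnx V=(\Delta_x\phi)\arrowvert_{y=\eta}+\nabla_x\eta\cdot(\partial_y\nabla_x\phi)\arrowvert_{y=\eta}$, and since $\Delta_x\phi=-\partial_y^2\phi$ this equals $-(\partial_y^2\phi-\nabla_x\eta\cdot\partial_y\nabla_x\phi)\arrowvert_{y=\eta}$. Thus the second and third contributions collapse to $-\nabla_x\zeta\cdot V-\zeta\,\cnx V=-\cnx(V\zeta)$, and altogether $dG(\eta)\psi\cdot\zeta=-G(\eta)(B\zeta)-\cnx(V\zeta)$. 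Since the coefficients $B$ and $V$ of Lemma~\ref{L:31} coincide with $\mathcal{B}$ and $\mathcal{V}$ in the statement, this is the desired identity.

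The hard part will be the rigorous justification of differentiability in the first paragraph: because the domain moves with $\eta$ one cannot differentiate directly under the boundary conditions, and the pointwise shape derivative $\dot\phi$ is a priori defined only on the shrinking intersection of the domains $\Omega_{\eta_\eps}$. The change of variables to the fixed strip is precisely what converts the moving-boundary dependence into a smooth coefficient dependence, to which the implicit function theorem applies; once this is in place, the computation of the derivative is the purely formal manipulation carried out above, now legitimized.
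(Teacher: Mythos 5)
The paper does not actually prove this proposition: it is quoted from Lannes, and the ``proof'' in the text is a two-line pointer to \cite{LannesJAMS,LannesLivre}. Your sketch is precisely the standard argument from those references, so there is no divergence of method to report. Your formal identification of the derivative is correct: the three contributions (variation of $\phi_\eps$, of $\nabla_x\eta_\eps$, and of the evaluation height) combine as you say, the key cancellation being $\cnx V=-(\partial_y^2\phi-\nabla_x\eta\cdot\partial_y\nabla_x\phi)\arrowvert_{y=\eta}$ from harmonicity, which collapses the last two contributions into $-\cnx(V\zeta)$ and yields \eqref{n:shape}. The one caveat is the one you flag yourself: at the stated regularity $s>1+d/2$ the differentiability of $\eta\mapsto G(\eta)\psi$ is the genuinely hard part (uniform elliptic estimates for the variable-coefficient problem on the flattened strip, smooth dependence of $A[\eta]$ on $\eta$ in $H^{s}$, and the trace landing in $H^{s-1}$), and your first paragraph only gestures at it; that analysis is exactly the content of Lannes' monograph and cannot be dismissed as ``standard elliptic theory plus the implicit function theorem'' without the product and commutator estimates that make it work at this threshold.
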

This result is proved for smoother functions by Lannes in~\cite{LannesJAMS}. 
We refer to his monograph~\cite{LannesLivre} for the proof 
in the general case. 

\subsection{Reformulation of the water-wave problem}

The purpose of this section is to recall 
the Craig--Sulem formulation (see \cite{CrSuSu,CrSu}) on $\eta$ and $\psi$ of the water-wave system (see also \cite{Bertinoro}).    
 
\begin{prop}\label{propN}
Set
\begin{equation}\label{N=}
  N(\psi, \eta) = \mez \vert V\vert^2 - \mez B^2 + B(V \cdot\nabla_x \eta).
  \end{equation}
Then the water-wave system can be written as
\begin{equation*}
\left\{
\begin{aligned}
&\partial_t \eta = G(\eta)\psi,\\
&\partial_t \psi + g \eta   + N(\psi, \eta) = 0.
 \end{aligned}
 \right.
 \end{equation*}
\end{prop}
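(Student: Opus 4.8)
The plan is to read off the two evolution equations directly from the boundary conditions \e{t8}, \e{t9} and the Bernoulli equation in \e{t5}. The first equation requires no work: the kinematic condition \e{t8} states precisely that $\partial_t\eta=\sqrt{1+\la\nabla_x\eta\ra^2}\,\partial_n\phi\arrowvert_{y=\eta}$, which is by definition of the Dirichlet--Neumann operator equal to $G(\eta)\psi$; hence $\partial_t\eta=G(\eta)\psi$.

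For the second equation I would first differentiate $\psi(t,x)=\phi(t,x,\eta(t,x))$ in time by the chain rule, obtaining
$$
\partial_t\psi=(\partial_t\phi)\arrowvert_{y=\eta}+B\,\partial_t\eta,
$$
with $B=(\partial_y\phi)\arrowvert_{y=\eta}$. Then I would evaluate the Bernoulli equation $\partial_t\phi+\mez\la\nabla_{x,y}\phi\ra^2+P+gy=0$ on the free surface $y=\eta$: since $P\arrowvert_{y=\eta}=0$ by \e{t9} and $\la\nabla_{x,y}\phi\ra^2\arrowvert_{y=\eta}=\la V\ra^2+B^2$ where $V=(\nabla_x\phi)\arrowvert_{y=\eta}$, this gives
$$
(\partial_t\phi)\arrowvert_{y=\eta}=-\mez\big(\la V\ra^2+B^2\big)-g\eta.
$$

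It then remains to combine the two displays. The definition of $G(\eta)$ also yields $\partial_t\eta=G(\eta)\psi=B-V\cdot\nabla_x\eta$, so that $B\,\partial_t\eta=B^2-B(V\cdot\nabla_x\eta)$; substituting and collecting terms produces
$$
\partial_t\psi=-\mez\la V\ra^2+\mez B^2-B(V\cdot\nabla_x\eta)-g\eta=-g\eta-N(\psi,\eta),
$$
the last equality being exactly the definition \e{N=} of $N$. This is the announced identity $\partial_t\psi+g\eta+N(\psi,\eta)=0$. I expect no genuine obstacle in this computation; the only subtlety is to ensure that the pointwise trace evaluations and the chain rule are legitimate, which holds for regular solutions because $\nabla_{x,y}\phi\in C^1(\overline{\Omega(t)})$ under the assumption $s>2+d/2$.
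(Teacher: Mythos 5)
Your argument is correct and coincides with the paper's own proof: both derive the first equation from the kinematic condition and the definition of $G(\eta)$, and both obtain the second by combining the chain-rule identity $(\partial_t\phi)\arrowvert_{y=\eta}=\partial_t\psi-B\,\partial_t\eta$ with the Bernoulli equation evaluated on the free surface (where $P=0$) and the substitution $\partial_t\eta=B-V\cdot\nabla_x\eta$. The algebra checks out and nothing is missing.
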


\begin{proof}
By definition of $G(\eta)\psi$, 
we get the first equation directly from \eqref{kinematics}.

Now, by evaluating  the equation
$$
\partial_t \phi+ \frac{1}{2} \la \nabla_{x,y} \phi \ra^2 +  P +g y =0,
$$
on the free surface we obtain
$$
(\partial_t\psi-B\partial_t\eta)+\mez |V|^2+\mez B^2+g\eta =0.
$$
Since $\partial_t \eta=G(\eta)\psi=B-V\cdot\nabla_x\eta$, it follows that
$$
\partial_t\psi+g\eta +\mez |V|^2-\mez B^2+B (V\cdot \nabla_x \eta)=0,
$$
which gives the second equation.
\end{proof}

%%%%%%%%%%%%%%%%%%%%%%%%%%%%%%%%
\begin{coro}\label{P:C1:PZ}
The water-wave equations can be written under the form
\begin{equation}\label{ZCS}
\left\{
\begin{aligned}
&\frac{\partial \eta}{\partial t} - G(\eta) \psi =0,\\
&\frac{\partial \psi}{\partial t} + g \eta + \frac{1}{2} \la \nabla_x \psi \ra^2 
- \frac{1}{2} 
\frac{(\nabla_x \psi \cdot \nabla_x \eta + G(\eta) \psi)^2}{1 + |\nabla_x \eta |^2} 
 =0.
\end{aligned}
\right.
\end{equation}
 \end{coro}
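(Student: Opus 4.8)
The plan is to reduce the corollary to the formulation already obtained in Proposition~\ref{propN}, and then to eliminate the auxiliary unknowns $B$ and $V$ using the explicit expressions \e{n897} of Lemma~\ref{L:31}. Since the first equation $\partial_t\eta=G(\eta)\psi$ is identical in both formulations, the only thing to check is that the nonlinear term $N(\psi,\eta)=\mez\la V\ra^2-\mez B^2+B(V\cdot\nabla_x\eta)$ from Proposition~\ref{propN} can be rewritten as
$$
N=\mez\la\nabla_x\psi\ra^2-\mez\frac{(\nabla_x\psi\cdot\nabla_x\eta+G(\eta)\psi)^2}{1+\la\nabla_x\eta\ra^2}.
$$
This is a purely algebraic identity, so no analysis is involved and everything reduces to careful bookkeeping.

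First I would insert $V=\nabla_x\psi-B\nabla_x\eta$ into the three terms defining $N$. Expanding
$$
\la V\ra^2=\la\nabla_x\psi\ra^2-2B\,\nabla_x\psi\cdot\nabla_x\eta+B^2\la\nabla_x\eta\ra^2,\qquad V\cdot\nabla_x\eta=\nabla_x\psi\cdot\nabla_x\eta-B\la\nabla_x\eta\ra^2,
$$
I would observe that the two cross terms $\pm B\,\nabla_x\psi\cdot\nabla_x\eta$ cancel, leaving the compact form
$$
N=\mez\la\nabla_x\psi\ra^2-\mez B^2\big(1+\la\nabla_x\eta\ra^2\big).
$$
Then I would substitute the expression $B=(G(\eta)\psi+\nabla_x\eta\cdot\nabla_x\psi)/(1+\la\nabla_x\eta\ra^2)$ from \e{n897}; the factor $(1+\la\nabla_x\eta\ra^2)$ cancels one power of the denominator of $B^2$, giving $B^2(1+\la\nabla_x\eta\ra^2)=(G(\eta)\psi+\nabla_x\eta\cdot\nabla_x\psi)^2/(1+\la\nabla_x\eta\ra^2)$. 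Inserting this into the previous line produces exactly the nonlinear part of the second equation of \e{ZCS}, and adding back $g\eta$ from the equation for $\partial_t\psi$ completes the identification.

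There is essentially no obstacle here: the corollary is a direct consequence of Proposition~\ref{propN} together with Lemma~\ref{L:31}. The only point requiring a little care is the cancellation of the cross terms, which hinges on the precise coefficient $1$ in front of $B(V\cdot\nabla_x\eta)$ in the definition of $N$, so that keeping track of signs is the whole content of the computation.
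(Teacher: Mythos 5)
Your proposal is correct and follows essentially the same route as the paper: substitute $V=\nabla_x\psi-B\nabla_x\eta$ into $N$, observe the cancellation of the cross terms to obtain $N=\mez\la\nabla_x\psi\ra^2-\mez B^2(1+\la\nabla_x\eta\ra^2)$, and then insert the expression for $B$ from \eqref{n897}. The algebra checks out, so nothing more is needed.
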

%%%%%%%%%%%%%%%%%%%%%%%%%%%%%%%%
\begin{proof}
Since $V = \nabla_x \psi - B \nabla_x \eta$ we see that
$$
\vert V\vert^2 = \vert \nabla_x \psi\vert^2 + B^2 \vert \nabla_x \eta\vert^2 -2B (\nabla_x \psi  \cdot \nabla_x \eta),
$$
which implies that
$$
N(\psi, \eta)  = \mez\vert \nabla_x \psi\vert^2 + \mez B^2 \vert \nabla_x \eta\vert^2 - B (\nabla_x \psi  \cdot \nabla_x \eta)-\mez B^2 + B(V\cdot \nabla_x \eta).
$$
Now, 
the expression of $V$ gives
$$
B(V\cdot \nabla_x \eta) = B (\nabla_x \psi  \cdot \nabla_x \eta) - B^2\vert \nabla_x \eta\vert^2.
$$
Therefore
$$
N(\psi, \eta) = \mez\vert \nabla_x \psi\vert^2 - \mez B^2(1+\vert \nabla_x \eta\vert^2), 
$$
and we conclude using Proposition \ref{propN} and the expression of $B$ given by\eqref{n897}.
\end{proof}
To conclude, we recall that the mean value of $\eta$ is conserved (this is equivalent to saying that the mass of the fluid is conserved).

\begin{prop}\label{eta1}
Let $(\eta,\psi)\in C^0([0,T];H^{s}(\xT^d)\times H^s(\xT^d))$ for some $T>0$ and some 
$s>2+d/2$. Then, for all $t\in [0,T]$,
$$
\int_{\xT^d} \eta(t,x)\dx 
= \int_{\xT^d} \eta_0(x)\dx.
$$
\end{prop}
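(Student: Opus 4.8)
The plan is to differentiate the mean value $\int_{\xT^d}\eta(t,x)\dx$ in time and to show that its derivative vanishes identically. Since $(\eta,\psi)$ is a regular solution, the regularity $s>2+d/2$ guarantees (by the remark following Definition~\ref{defi:regular}) that $\nabla_{x,y}\phi\in C^1(\overline{\Omega(t)})$, which is enough to differentiate under the integral sign and to justify all the traces and integrations by parts used below. Using the first equation of~\e{systemT}, I would write
\be
\fract\int_{\xT^d}\eta(t,x)\dx=\int_{\xT^d}\partial_t\eta(t,x)\dx=\int_{\xT^d}(G(\eta)\psi)(t,x)\dx,
\ee
so that everything reduces to proving the single identity $\int_{\xT^d}G(\eta)\psi\dx=0$.

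The key step is then to establish $\int_{\xT^d}G(\eta)\psi\dx=0$, for which I would offer two arguments. The quickest uses the self-adjointness of $G(\eta)$ together with the fact that the constant function~$1$ lies in its kernel: indeed the harmonic extension of a constant is that same constant, whose normal derivative vanishes on $\partial\Omega$, so $G(\eta)1=0$ and hence $\int_{\xT^d}G(\eta)\psi\dx=\int_{\xT^d}\psi\,G(\eta)1\dx=0$. Alternatively, and more self-containedly, I would recall that by the definition of the Dirichlet-to-Neumann operator $G(\eta)\psi=\sqrt{1+|\nabla_x\eta|^2}\,\partial_n\phi\arrowvert_{y=\eta}$, and that $\dsigma=\sqrt{1+|\nabla_x\eta|^2}\dx$ is the surface measure on $\Sigma$, so that $\int_{\xT^d}G(\eta)\psi\dx=\int_{\Sigma}\partial_n\phi\dsigma$; applying the divergence theorem to the harmonic function $\phi$ on $\Omega$ and using the Neumann condition $\partial_y\phi\arrowvert_{y=-h}=0$ on the bottom (so that $\partial_n\phi=0$ on $\Gamma$) gives $\int_{\Sigma}\partial_n\phi\dsigma=\iint_{\Omega}\Delta_{x,y}\phi\dydx=0$.

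Combining the two steps, the time derivative of the mean vanishes, and integrating from $0$ to $t$ yields the claimed conservation $\int_{\xT^d}\eta(t,x)\dx=\int_{\xT^d}\eta_0(x)\dx$. I do not expect any genuine obstacle here, as the argument is elementary; the only points requiring care are the justification of differentiating under the integral and of the boundary traces in the divergence theorem, both of which are supplied by the $C^1(\overline{\Omega})$ regularity of $\nabla_{x,y}\phi$. In the infinite-depth case $h=+\infty$ the sole additional ingredient is the decay of $\nabla_{x,y}\phi$ as $y\to-\infty$ already established in~\e{decay:var2}, which guarantees that the bottom contribution to the divergence theorem is absent.
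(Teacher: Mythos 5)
Your proposal is correct and follows essentially the same route as the paper: differentiate the mean, use $\partial_t\eta=G(\eta)\psi$, and kill $\int_{\xT^d}G(\eta)\psi\dx$ via the divergence theorem applied to the harmonic extension together with the Neumann condition at the bottom. The alternative argument via self-adjointness and $G(\eta)1=0$ is a valid (and equivalent) variant, but the divergence-theorem computation you give second is precisely the paper's proof.
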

\begin{proof}
Write,
$$
\fract\int_{\xT^d} \eta\dx
=\int_{\xT^d} \partial_t\eta\dx 
=\int_{\xT^d} G(\eta)\psi \dx
$$
and then observe that the later integral vanishes, as can be verified by using the divergence theorem,
$$
\int_{\xT^d} G(\eta)\psi\dx=\int_{\partial \Omega} \partial_n\phi \dsigma=
\iint_{\Omega} \Delta_{x,y}\phi\dydx=0,
$$
where we have used the fact that $\partial_n  \phi = - \partial_y \phi =0$ on the bottom.
%0=\iint_{\Omega} \Delta_{x,y}\phi(x,y)\dydx = \int_{\partial \Omega} \partial_n\phi \dsigma.
%$$
%On the bottom, we have 
%$\partial_n  \phi = - \partial_y \phi =0$ 
%while on the free surface $\Sigma$, there holds 
%$\partial_n \phi =
%\frac{1}{\sqrt{1+ \vert \nabla\eta\vert^2 }} (G(\eta)\psi)$. It follows that
%$$
%0=\int_{\partial \Omega}\partial_n\phi\dsigma = \int_{\xT^d} G(\eta)\psi\dx,
%$$
%which completes the proof.
\end{proof}
 \section{Trace inequalities}\label{Appendix:C}
Let $h\in [1,+\infty]$ and consider $\eta\in W^{1,\infty}(\xT^d)$ satisfying $\inf_{\xT^d}\eta(x)>-h/2$ when $h<+\infty$. 
Then it follows from the classical variational theory that, 
for all function $\psi$ in the Sobolev 
space $H^{\mez}(\xT^d)$, there is a 
unique variational solution 
$\phi$ to the problem
\be\label{defi:phi-again}
\begin{aligned}
&\Delta_{x,y}\phi=0 \quad\text{in }\Omega=\{(x,y)\,:\, x\in \xT^d,~-h<y<\eta(x)\},\\ 
&\phi\arrowvert_{y=\eta}=\psi,\quad \partial_y\phi\arrowvert_{y=-h}=0.
\end{aligned}
\ee

In this section, we are going to prove several trace inequalities, which allow to control the trace of $\phi$ either on the free surface (which is $\psi$) 
or at the bottom. Such trace inequalities have been systematically studied by Lannes in his book~\cite{LannesLivre}. Here we will prove estimates which are optimal 
with respect to the dependence in $\eta$ as we will explain.

\subsection{Case of infinite depth}\label{S:BMO}

In the infinite depth case (that is for $h=+\infty$), we recall a trace estimate from a joint work of the first author with 
Quoc-Hung Nguyen (it appears in the Lecture notes~\cite[Chapter $4$]{Berkeley}). 
It gives a trace estimate assuming only that the 
gradient of $\eta$ is estimated in $\BMO(\xT^d)$. 
This space, introduced by 
John and Nirenberg, is defined as follows. 
\begin{defi}\label{defi:BMO}
The space $\BMO(\xT^d)$ (for Bounded Mean Oscillations) 
consists of those functions 
$ f\in L^1 (\xT^d)$ such that $\lA f\rA_{\BMO}<+\infty$,
with
$$
\lA f\rA_{\BMO}=\sup_{Q\subset \xT^d} \frac{1}{\la Q\ra}\int_Q \la f-f_Q\ra\dx,
$$
where the supremum is taken over all cubes $Q\subset \xT^d$ and,
$$
f_Q=\frac{1}{\la Q\ra}\int_Q  f\dx.
$$
\end{defi}

\begin{prop}\label{P:BMO}
Assume that $h+\infty$. There exists $c>0$ depending only on the dimension $d$ such that for all $\eta\in W^{1,\infty}(\xT^d)$ and for all $\psi \in \dot{H}^\mez(\xT^d)$  there holds,
$$
\iint_{\Omega} \la \nabla_{x,y}\phi(x,y)\ra^2\dydx = \int_{\xT^d}\psi(x) (G(\eta)\psi)(x)\dx\ge \frac{c}{1+\lA \nabla_x \eta\rA_{\BMO}} \lA \psi\rA_{\dot{H}^\mez(\xT^d)}^2.
$$
\end{prop}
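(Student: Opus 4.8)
The stated equality $\iint_\Omega|\nabla_{x,y}\phi|^2\,\dydx=\int_{\xT^d}\psi\,G(\eta)\psi\,\dx$ is nothing but the divergence identity \eqref{positivityDN}, so the entire content is the lower bound, which I read as the trace inequality $\|\psi\|_{\dot H^\mez}^2\le C\big(1+\|\nabla_x\eta\|_{\BMO}\big)\iint_\Omega|\nabla_{x,y}\phi|^2\,\dydx$. The plan is to flatten the fluid domain by the shear change of variables $(x,z)\mapsto(x,z+\eta(x))$, which maps $\{z<0\}$ onto $\Omega$ with Jacobian $1$. Setting $U(x,z)=\phi(x,z+\eta(x))$, one has $U\arrowvert_{z=0}=\psi$, and a direct computation gives the energy identity
\begin{equation*}
\iint_\Omega|\nabla_{x,y}\phi|^2\,\dydx=\iint_{z<0}\Big[(\partial_z U)^2+|\nabla_x U-\partial_z U\,\nabla_x\eta|^2\Big]\,\dx\dz,
\end{equation*}
while, because $\phi$ is harmonic and variational at $-\infty$ (see \eqref{decay:var2}), $U$ solves an elliptic equation $\cnx_{x,z}(A\nabla_{x,z}U)=0$ with coefficient matrix $A$ depending only on $\nabla_x\eta$ and satisfying $\det A=1$.

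First I would invoke the Dirichlet principle on the flat half-space: since $\|\psi\|_{\dot H^\mez}^2$ equals the minimal Dirichlet energy among all extensions of $\psi$ to $\{z<0\}$, the competitor $U$ yields $\|\psi\|_{\dot H^\mez}^2\le\iint_{z<0}\big(|\nabla_x U|^2+(\partial_z U)^2\big)\,\dx\dz$. Thus it suffices to compare this \emph{flat} energy of $U$ with the \emph{curved} energy displayed above, which is exactly $\iint_\Omega|\nabla\phi|^2$. Their difference is the cross expression $\iint_{z<0}\big[2\,\partial_z U\,\nabla_x U\cdot\nabla_x\eta-(\partial_z U)^2|\nabla_x\eta|^2\big]$, and everything reduces to estimating this quantity by $C\|\nabla_x\eta\|_{\BMO}\iint_\Omega|\nabla\phi|^2$.

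The heart of the matter — and the reason one obtains the $\BMO$ norm of $\nabla_x\eta$ rather than its crude $L^\infty$ norm — is that this cross term must not be bounded pointwise, but through $\mathcal H^1$--$\BMO$ duality (Fefferman--Stein). Undoing the change of variables, the leading piece is $\int_{\xT^d}\nabla_x\eta\cdot g\,\dx$ with $g(x)=\int_{-\infty}^{\eta(x)}(\partial_y\phi\,\nabla_x\phi)(x,y)\,\dy$. Since $\phi$ is harmonic, the field $\nabla_{x,y}\phi$ is simultaneously divergence-free and curl-free, so $\partial_y\phi\,\nabla_x\phi$ is a null (div--curl) quadratic form in the sense of Coifman--Lions--Meyer--Semmes; the key claim is that its vertical average $g$ lies in the Hardy space $\mathcal H^1(\xT^d)$ with $\|g\|_{\mathcal H^1}\le C\iint_\Omega|\nabla\phi|^2$. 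Granting this, the duality pairing is bounded by $C\|\nabla_x\eta\|_{\BMO}\iint_\Omega|\nabla\phi|^2$, which controls the first term.

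The main obstacle is precisely this Hardy-space bound, together with the remaining quadratic term $\iint(\partial_z U)^2|\nabla_x\eta|^2$. This last term is \emph{not} controlled by $\|\nabla_x\eta\|_{\BMO}\iint|\nabla\phi|^2$ when handled on its own (a $\BMO$ function need not be square-integrable), so it has to be kept coupled to the first piece and absorbed by means of the Carleson-measure characterization of $\BMO$ (the gradient of the harmonic extension of $\nabla_x\eta$ generates a Carleson measure of mass $\lesssim\|\nabla_x\eta\|_{\BMO}^2$), after localizing the shear to a boundary strip. Once the full cross expression is shown to be $\le C\|\nabla_x\eta\|_{\BMO}\iint_\Omega|\nabla\phi|^2$, the flat energy of $U$ is at most $\big(1+C\|\nabla_x\eta\|_{\BMO}\big)\iint_\Omega|\nabla\phi|^2$, and combining with the Dirichlet-principle bound of the second paragraph gives the stated inequality. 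I would defer the compensated-compactness and Carleson estimates to the cited lecture notes, as they are the genuinely technical ingredients.
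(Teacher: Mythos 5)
Your proposal cannot be checked against the paper's own argument, because the paper does not prove Proposition~\ref{P:BMO}: it is quoted from \cite[Chapter~4]{Berkeley} (joint work with Q.-H.~Nguyen), and the text only proves the weaker $L^\infty$ analogue, Proposition~\ref{P:C3}. Your elementary first half is correct and complete: the shear $(x,z)\mapsto(x,z+\eta(x))$ has Jacobian $1$, the energy identity for $U(x,z)=\phi(x,z+\eta(x))$ is right, and the Dirichlet principle on the flat half-space gives $\lA\psi\rA_{\dot{H}^{1/2}}^2\le C\iint_{z<0}\la\nabla_{x,z}U\ra^2\dx\dz$. Everything thus reduces, as you say, to
\begin{equation*}
\iint_{\Omega}\Big[2\,\partial_y\phi\,\nabla_x\phi\cdot\nabla_x\eta+(\partial_y\phi)^2\la\nabla_x\eta\ra^2\Big]\dydx\;\le\;C\lA\nabla_x\eta\rA_{\BMO}\iint_\Omega\la\nabla_{x,y}\phi\ra^2\dydx.
\end{equation*}

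This is where the proof stops being a proof, and the gap is not a deferrable technicality: it is the entire content of the statement, since the trivial bound by $\lA\nabla_x\eta\rA_{L^\infty}$ only reproves Proposition~\ref{P:C3}. For the first term, the CLMS div--curl lemma does not apply as invoked: (a) $\partial_y\phi\,\partial_{x_j}\phi$ is a single off-diagonal entry of $\nabla\phi\otimes\nabla\phi$, not the scalar product $E\cdot B$ of a divergence-free field with a curl-free one (the relevant structure is the divergence-free stress tensor $\partial_i\phi\,\partial_j\phi-\tfrac12\delta_{ij}\la\nabla\phi\ra^2$, the same one behind the Rellich identities of Section~\ref{S:2}); (b) the fields live on $\Omega$ and extending by zero destroys the div/curl structure; (c) you need $\mathcal{H}^1(\xT^d)$ of the $y$-integral over the varying interval $(-\infty,\eta(x))$, not $\mathcal{H}^1$ of a $(d+1)$-dimensional product. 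Each of (a)--(c) requires a genuine argument and none is supplied. The quadratic term is in worse shape: the Carleson-measure characterization of $\BMO$ controls $\la\nabla_{x,z}(e^{z\vert D_x\vert}\nabla_x\eta)\ra^2\vert z\vert\dx\dz$, not $\la\nabla_x\eta(x)\ra^2$ itself along vertical lines, and $(\partial_zU)^2\dx\dz$ is not a priori a tent-space object that can be paired with such a measure; moreover ``keeping it coupled to the first piece'' produces no cancellation, since the full error equals $\iint_{z<0}\big(\la\nabla_xU\ra^2-\la\nabla_xU-\partial_zU\,\nabla_x\eta\ra^2\big)\dx\dz$, which has no sign and no evident $\BMO$ structure. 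Until the Hardy-space bound for the integrated stress-tensor component and a workable absorption of the quadratic term are actually established, the proposed argument does not yield the $\BMO$ dependence that distinguishes Proposition~\ref{P:BMO} from Proposition~\ref{P:C3}.
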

%\begin{rema}
%%$i)$ By using this result instead of Proposition~\ref{P:C3} in the proof of Lemma~\ref{est-psi}, we obtain the conclusion in Remark~\ref{R:1.7}.
%It immediately follows that, for all function $u\in H^1(\Omega)$, the trace $f(x)=u(x,\eta(x))$ satisfies
%$$
%\lA f\rA_{\dot{H}^\mez}\les  \big(1+\lA \nabla_x\eta\rA_{\BMO}\big)^\mez \lA \nabla_{x,y}u\rA_{L^2(\Omega)}.
%$$
%Indeed, if we denote by $\phi$ the harmonic extension of $f$ (solution to \e{defi:phi-again} with $\psi$ replaced by $f$), then 
%$\lA \nabla_{x,y}\phi\rA_{L^2(\Omega)}\le \lA \nabla_{x,y}u\rA_{L^2(\Omega)}$.
%\end{rema}

\subsection{Case of finite depth}
In the case of finite depth, we shall prove a result slightly weaker than Proposition \ref{P:BMO}.

\begin{prop}\label{P:C3}
There exists $C>0$ depending only 
on the dimension $d$ such that for all $h>0$,  
for all Lipschitz function $\eta$ satisfying 
$\inf_{\xT^d} \eta(x)> -\mez h$, and 
for all $\psi \in \dot{H}^\mez(\xT^d)$ there holds,
\be\label{to}
\int_{\xT^d} \psi(x) (G(\eta)\psi)(x)\dx \geq \frac{C  \tanh (h)}{1+ \Vert \nabla_x \eta\Vert_{L^\infty(\xT^d)}} \Vert \psi \Vert^2_{\dot{H}^\mez(\xT^d)},
\ee
where $\tanh$ denotes the hyperbolic tangent.
\end{prop}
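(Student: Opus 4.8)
The plan is to use the positivity identity \eqref{positivityDN}, which turns the statement into a coercivity (trace) estimate for the Dirichlet energy:
\[
\int_{\xT^d}\psi\,(G(\eta)\psi)\dx=\iint_\Omega\la\nabla_{x,y}\phi\ra^2\dydx ,
\]
so that it suffices to bound this energy from below by $\dfrac{C\tanh(h)}{1+\lA\nabla_x\eta\rA_{L^\infty}}\lA\psi\rA_{\dot H^\mez}^2$. First I would discard the bulk of the fluid and keep only a thin slab just below the free surface: fixing $\delta=\mez\min(h,1)$ one has $\delta\le h/2$, so the slab $\Omega^\delta\defn\{(x,y):\eta(x)-\delta<y<\eta(x)\}$ is contained in $\Omega$ for every admissible $\eta$ (regardless of how large $\sup\eta$ is, which is exactly what makes the argument insensitive to the vertical position of the graph), and $\iint_\Omega\la\nabla_{x,y}\phi\ra^2\dydx\ge\iint_{\Omega^\delta}\la\nabla_{x,y}\phi\ra^2\dydx$.

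Next I would straighten $\Omega^\delta$ by the shear change of variables $(x,z)\mapsto(x,\eta(x)+z)$ with $z\in(-\delta,0)$, whose Jacobian is identically $1$. Writing $\tilde\phi(x,z)=\phi(x,\eta(x)+z)$, so that $\tilde\phi\arrowvert_{z=0}=\psi$, the chain rule gives $(\nabla_x\phi)\circ=\nabla_x\tilde\phi-\nabla_x\eta\,\partial_z\tilde\phi$ and $(\partial_y\phi)\circ=\partial_z\tilde\phi$, hence the exact identity
\[
\iint_{\Omega^\delta}\la\nabla_{x,y}\phi\ra^2\dydx=\iint_{S^\delta}\Big(\la\nabla_x\tilde\phi-\nabla_x\eta\,\partial_z\tilde\phi\ra^2+(\partial_z\tilde\phi)^2\Big)\dz\dx,\qquad S^\delta=\xT^d\times(-\delta,0).
\]
On the flat strip $S^\delta$ the Dirichlet energy is minimized (over all extensions with trace $\psi$ at $z=0$ and free bottom at $z=-\delta$) by the flat Dirichlet-to-Neumann energy, so $\iint_{S^\delta}\la\nabla_{x,z}\tilde\phi\ra^2\dz\dx\ge\sum_{\xi}|\xi|\tanh(\delta|\xi|)\,|\widehat\psi(\xi)|^2$. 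Using $|\xi|\ge1$ for $\xi\neq0$, the monotonicity of $\tanh$, the elementary $\tanh(\delta)\ge c_0\tanh(h)$ coming from $\delta\asymp\min(h,1)$, and $|\xi|\ge\langle\xi\rangle/\sqrt2$, this is $\gtrsim\tanh(h)\lA\psi\rA_{\dot H^\mez}^2$. Thus everything reduces to showing, with $L\defn\lA\nabla_x\eta\rA_{L^\infty}$,
\[
\iint_{S^\delta}\Big(\la\nabla_x\tilde\phi-\nabla_x\eta\,\partial_z\tilde\phi\ra^2+(\partial_z\tilde\phi)^2\Big)\dz\dx\ge\frac{c}{1+L}\iint_{S^\delta}\la\nabla_{x,z}\tilde\phi\ra^2\dz\dx .
\]

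The main obstacle is precisely this last inequality with the optimal \emph{linear} loss in the Lipschitz norm. A purely algebraic, pointwise diagonalization of the quadratic form $\la p-(\nabla_x\eta)q\ra^2+q^2$ only yields the weaker factor $(1+L^2)^{-1}$, and this is sharp for a general competitor (its smallest eigenvalue is comparable to $|\nabla_x\eta|^{-2}$); the gain to the linear power must therefore use that $\phi$ is harmonic, equivalently that $\tilde\phi$ solves the flattened divergence-form elliptic equation. Concretely, the difficulty is to control the crossed, variable-coefficient term $\iint_{S^\delta}\nabla_x\eta\cdot\nabla_x\tilde\phi\,\partial_z\tilde\phi\dz\dx$ by $L$ times the energy, \emph{without} integrating by parts in $x$ — which is forbidden here, since moving a derivative onto the merely Lipschitz graph $\eta$ would cost an unavailable second derivative.

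I would handle this final step by a Littlewood--Paley/commutator argument that pairs the $L^\infty$ bound on $\nabla_x\eta$ with the harmonic profile of $\tilde\phi$ in the strip, for which $\nabla_x\tilde\phi$ and $\partial_z\tilde\phi$ are comparable frequency by frequency, exactly in the spirit of the $\BMO$ trace estimate of Proposition~\ref{P:BMO}, now with the $L^\infty$ norm in place of the $\BMO$ norm (which also explains why the finite-depth statement is slightly weaker). Finally I would assemble the three steps, noting that one may assume $\widehat\psi(0)=0$, since $G(\eta)$ annihilates constants and both sides of the claim depend on $\psi$ only modulo its mean, and that the infinite-depth bound is recovered either by letting $h\to+\infty$ or directly from Proposition~\ref{P:BMO} together with $\lA\nabla_x\eta\rA_{\BMO}\le 2\lA\nabla_x\eta\rA_{L^\infty}$.
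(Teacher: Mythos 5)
Your reductions to the thin slab $\Omega^\delta$ and the shear flattening are sound, and so is the identification of the minimal flat-strip Dirichlet energy with $\sum_\xi\la\xi\ra\tanh(\delta\la\xi\ra)\la\widehat\psi(\xi)\ra^2$. But the whole proof then rests on the final inequality
\[
\iint_{S^\delta}\Big(\la\nabla_x\tilde\phi-\nabla_x\eta\,\partial_z\tilde\phi\ra^2+(\partial_z\tilde\phi)^2\Big)\dz\dx\ \ge\ \frac{c}{1+L}\iint_{S^\delta}\la\nabla_{x,z}\tilde\phi\ra^2\dz\dx,
\]
with $L=\lA\nabla_x\eta\rA_{L^\infty}$, which you correctly identify as the crux but do not prove: the appeal to ``a Littlewood--Paley/commutator argument in the spirit of Proposition~\ref{P:BMO}'' is not an argument (and Proposition~\ref{P:BMO} is itself quoted in the paper without proof). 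Worse, this intermediate claim is strictly stronger than what is needed and is very doubtful. Expanding $\la\nabla_x\tilde\phi\ra^2=\la\Lambda_2\tilde\phi\ra^2+2\nabla_x\eta\cdot(\Lambda_2\tilde\phi)\,\partial_z\tilde\phi+\la\nabla_x\eta\ra^2(\partial_z\tilde\phi)^2$ (where $\Lambda_2\tilde\phi=\nabla_x\tilde\phi-\nabla_x\eta\,\partial_z\tilde\phi$ is the flattened horizontal derivative), your inequality would force $\iint_{\Omega^\delta}(\partial_y\phi)^2\dydx\lesssim L^{-1}\iint_{\Omega^\delta}\la\nabla_{x,y}\phi\ra^2\dydx$ whenever $\la\nabla_x\eta\ra\simeq L$ almost everywhere (a zigzag profile). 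Harmonicity provides no such smallness of the vertical component: in the narrow valleys under a steep zigzag the gradient of a harmonic function is essentially vertical. The structural flaw is that you try to dominate the Dirichlet energy of the \emph{particular competitor} $\tilde\phi$ in the flat strip by the true energy, whereas only the \emph{infimum} over competitors, i.e.\ $\tanh(h)\lA\psi\rA_{\dot H^{1/2}}^2$ up to constants, needs to be dominated.

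The paper's proof avoids exactly this trap. It flattens the whole domain by $\rho(x,z)=\frac1h(z+h)\eta(x)+z$ (Jacobian $\partial_z\rho\ge\mez$) and, instead of comparing two Dirichlet energies, writes the target quantity directly as
\[
\int_{\xT^d}\psi\,\la D_x\ra\tanh(h\la D_x\ra)\psi\dx=\iint_{\widetilde\Omega}\partial_z\Big(\tilde\phi\,\la D_x\ra\tanh\big((z+h)\la D_x\ra\big)\tilde\phi\Big)\dx\dz,
\]
expands the $z$-derivative, factors $\la D_x\ra=R\cdot\nabla_x$ with Riesz transforms, and substitutes $\nabla_x=\Lambda_2+(\nabla_x\rho)\Lambda_1$, where $\Lambda_1,\Lambda_2$ are the flattened derivatives whose weighted $L^2$ norms reproduce the true energy $Q(\tilde\phi)$. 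Cauchy--Schwarz then yields the bound $C(1+L)\,Q(\tilde\phi)$ because the unbounded coefficient $\nabla_x\rho$ appears exactly once, paired against the good factor $\Lambda_1\tilde\phi$; this single occurrence is precisely what produces the linear rather than quadratic loss, and it is the step your outline is missing. To salvage your slab reduction you would have to run this same integration-by-parts/Riesz-transform argument on $S^\delta$, rather than prove the pointwise energy comparison you wrote down.
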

\begin{rema}
The dependence in $1/(1+\lA\nabla \eta\rA_{L^\infty})$ 
is optimal. Indeed, denote by $\theta$ the harmonic extension of $\eta$ in the domain $\Omega$. Then it follows from \cite[Section $2.3$]{HP22note} (or by a slight modification of the proof of Proposition~\ref{P:4.1}) that
$$
0\le \iint_{\Omega} \la\nabla_{x,y}\theta\ra^2\dydx\le  \lA \eta\rA_{L^\infty}\bla \xT^d\bra.
$$
By applying this argument $\eta=\psi=\cos(kx)$, we see that an inequality of the form
$$
\int_{\xT^d} \psi(x) (G(\eta)\psi)(x)\dx \geq \frac{C  \tanh (h)}{(1+ \Vert \nabla_x \eta\Vert_{L^\infty(\xT^d)})^\alpha} \Vert \psi \Vert^2_{\dot{H}^\mez(\xT^d)},
$$
could not hold for $\alpha<1$.
\end{rema}
\begin{rema}
Assume that $h\ge 1$. 
It immediately follows that, for all function $u\in H^1(\Omega)$, the trace $f(x)=u(x,\eta(x))$ satisfies
$$
\lA f\rA_{\dot{H}^\mez}\les \big(1+\lA \nabla_x\eta\rA_{L^\infty(\xT^d)}\big)^\mez \lA \nabla_{x,y}u\rA_{L^2(\Omega)}.
$$
Indeed, if we denote by $\phi$ the harmonic extension of $f$ (solution to \e{defi:phi-again} with $\psi$ replaced by $f$), then 
$$
\int_{\xT^d} \psi(x) (G(\eta)\psi)(x)\dx =\lA \nabla_{x,y}\phi\rA_{L^2(\Omega)}^2\le \lA \nabla_{x,y}u\rA_{L^2(\Omega)}^2
$$
where the last inequality expresses the fact that the harmonic extension minimises the Dirichlet energy.
\end{rema}
\begin{proof}
We begin by 
using an elementary diffeomorphism to flatten 
the domain $\Omega = \{(x,y): x \in \xT^d, -h<y<\eta(x)\}$. Namely, 
set
$$
\rho(x,z) = \frac{1}{h}(z+ h)\eta(x) +z.
$$
Then $\partial_z \rho = \frac{1}{h} \eta(x) +1$ 
and  $\nabla_x \rho = \frac{1}{h}(z+h) \nabla_x \eta$. 
It follows that the map
$$
\widetilde{\Omega} = \xT^d \times (-h,0)\to \Omega, \quad (x,z)  \mapsto (x, \rho(x,z)),
$$ 
is a  $W^{1, \infty}$-diffeomorphism 
whose jacobian is equal to   $\partial_z \rho(x,z) \geq \mez$.
 
Let $\widetilde{\phi}$ be the image of $\phi$ by 
this diffeomorphism  that is, $$
\widetilde{\phi}(x,z) = \phi(x, \rho(x,z)).
$$
Notice that, by this diffeomorphism, 
\begin{equation}\label{partial-tilde}
\begin{aligned}
&\partial_y  \text{ transforms to  } 
\Lambda_1 = \frac{1}{\partial_z \rho}\partial_z 
\text{ and }\\
&\partial_{x_j}  \text{ transforms to }
\Lambda_{2,j} = \partial_{x_j} -( \partial_{x_j} \rho) \Lambda_1.
\end{aligned}
\end{equation}
In addition, we have  $\widetilde{\phi}\arrowvert_{z=0} = \psi$ and $\partial_z \widetilde{\phi}\arrowvert_{z=-h} =0$.

We know (see \eqref{positivityDN}) 
that
\begin{equation}\label{Q}
\begin{aligned}
\int_{\xT^d} \psi(x) (G(\eta)\psi)(x)\dx
= &\iint_{\Omega} \vert \nabla_{x,y}\phi(x,y)\vert^2\dx \dy\\
= &\iint_{\widetilde{\Omega}}\big( (\Lambda_1 \widetilde{\phi}(x,z))^2 
+ \vert\Lambda_2 \widetilde{\phi} (x,z)\vert^2 \big)
 \,\partial_z \rho(x,z)\dx \dz := Q(\widetilde{\phi}).
\end{aligned}
\end{equation}
Let $\tanh (t\vert D_x\vert)$ be the Fourier multiplier  with symbol $\tanh(t\vert \xi\vert)$. 
Since $\tanh(0) = 0$ we can write,
\begin{equation}\label{I=}
\begin{aligned}
 I&:= \int_{\xT^d}\int_{-h}^0 \partial_z \big( \widetilde{\phi}(x,z)  \vert D_x \vert \tanh ((z+h)\vert D_x\vert)\widetilde{\phi}(x,z)\big)\dx\dz \\
 &= \int_{\xT^d} \psi(x) \vert D_x \vert \tanh(h\vert D_x\vert)\psi(x)\dx. 
 \end{aligned}
 \end{equation}  
On the other hand, since the 
operator $ \vert D_x \vert \tanh ((z+h)\vert D_x\vert)$ is 
self-adjoint we have
\begin{align*}
I&= I_1 + I_2,\\
 I_1 &= 2
 \iint_{\widetilde{\Omega}}\big[(\partial_z  \widetilde{\phi})\vert D_x \vert \tanh ((z+h)\vert D_x\vert) \widetilde{\phi}\big](x,z)\dx\dz,\\ 
I_2&=   \iint_{\widetilde{\Omega}} \big[ \widetilde{\phi}  \frac{\vert D_x\vert^2}{\big(\cosh((z+h)\vert D_x\vert))^2}\widetilde{\phi})\big](x,z)\dx\dz.
\end{align*}  
  Since $\vert D_x\vert^2 = -\Delta_x$ we have
$$
I_2
=\iint_{\widetilde{\Omega}}\Big[\frac{1}{\big(\cosh ((z+h)\vert D_x\vert))^2}\nabla_x \widetilde{\phi}\cdot \nabla_x \widetilde{\phi}\Big](x,z)\dx \dz.
$$
and so, since $\cosh(x) \geq 1$,
$$
\vert I_2 \vert \leq  \iint_{\widetilde{\Omega}}\vert \nabla_x \widetilde{\phi}(x,z)\vert^2\dx\dz.
$$
Using \e{partial-tilde}, \eqref{Q} and the fact that $\partial_z \rho \geq \mez$, we deduce that  
   \begin{equation}\label{est_I2}
   \vert I_2 \vert   \leq C  (1+ \Vert \nabla_x \eta\Vert_{L^\infty(\xT^d)})Q(\widetilde{\phi}).
   \end{equation}
   
It remains to estimate the term $I_1$. To do so, given $1\le j\le d$, 
denote by $R_j$ the 
Riesz multiplier defined 
by 
$R_j u = \mathcal{F}^{-1}\big(  \frac{-i\xi_j}{\vert \xi \vert} \widehat{u}\big)$ 
and then set $R = (R_1, \ldots, R_d)$. 
Then $\vert D_x \vert = R \cdot \nabla_x$ which allows us to write
\begin{align*}
 I_1 &=    2 \iint_{\widetilde{\Omega}} \big[(\partial_z  \widetilde{\phi})  \tanh ((z+h)\vert D_x\vert) R \cdot \nabla_x\widetilde{\phi}\big](x,z)\dx\dz\\
 &=    2 \iint_{\widetilde{\Omega}} \big[(\Lambda_1 \widetilde{\phi})  \tanh ((z+h)\vert D_x\vert) R \cdot \nabla_x\widetilde{\phi}\big](x,z)\,  \partial_z \rho(x,z) \dx\dz.
 \end{align*}
We now use again the fact that 
$\nabla_x  = \Lambda_2 + (\nabla_x \rho) \Lambda_1$ to infer that
\begin{align*}
I_1 &= I_{11} + I_{12}, \\
I_{11} &=  2 \iint_{\widetilde{\Omega}} \big[(\Lambda_1  \widetilde{\phi})  \tanh ((z+h)\vert D_x\vert) R \cdot \Lambda_2\widetilde{\phi}\big](x,z)\,   \partial_z \rho(x,z) \dx\dz,\\
I_{12}&= 2 \iint_{\widetilde{\Omega}} \big[(\Lambda_1   \widetilde{\phi})  \tanh((z+h)\vert D_x\vert) R \cdot (\nabla_x \rho)\Lambda_1\widetilde{\phi}\big](x,z)\,   \partial_z \rho(x,z) \dx\dz.
\end{align*}
By using the two elementary inequalities 
$$
0\leq \tanh(x)\leq 1 \quad\text{for}\quad 
x\geq 0,\qquad 
\vert R_j(\xi)\vert \leq 1,
$$
together with the 
Cauchy-Schwarz inequality, 
we easily obtain that
$$
\vert I_{11} \vert \leq C  Q(\widetilde{\phi}).
$$
In the same way  we get,
$$
\vert I_{12} \vert  \leq C\Vert \nabla_x \eta\Vert_{L^\infty(\xT^d)}
Q(\widetilde{\phi}).
$$
Summing up  we have
\begin{equation}\label{est_I1}
\vert I_1 \vert   \leq C  (1+ \Vert \nabla_x \eta\Vert_{L^\infty(\xT^d)})Q(\widetilde{\phi}).
\end{equation} 
Now, by combining \eqref{I=}, \eqref{est_I2}, \eqref{est_I1} together with 
\eqref{Q}  we conclude that,
$$
\int_{\xT^d} \psi(x) \vert D_x \vert \tanh(h\vert D_x\vert)\psi(x)\dx
\leq C  (1+ \Vert \nabla_x \eta\Vert_{L^\infty(\xT^d)})
\int_{\xT^d} \psi(x) (G(\eta)\psi)(x)\dx.
$$
On the other hand, the Plancherel theorem implies that
$$
\int_{\xT^d} \psi(x) \vert D_x \vert \tanh (h\vert D_x\vert)\psi(x)\dx 
= c_d \sum_{\xi \in \xZ^d}\vert \xi \vert \tanh(h \vert \xi \vert) \vert \widehat{\psi}(\xi)\vert^2.
$$
Since $\tanh(h \vert \xi \vert) \geq \tanh(h)$ for $\xi\in \xZ^d\setminus\{0\}$, 
we conclude that
$$
\int_{\xT^d} \psi(x) (G(\eta)\psi)(x)\dx \geq 
\frac{C\tanh(h)}{1+ \Vert \nabla_x \eta\Vert_{L^\infty(\xT^d)}} \sum_{\xi \in \xZ^d}\vert \xi \vert  \vert \widehat{\psi}(\xi)\vert^2,
$$
equivalent to the wanted result.
\end{proof}
 %%%%%%%%%%%%%%%%%%%%%%%%%%%%%%%%%%%%%%%%%%%%%%%%%

\subsection{A trace inequality at the bottom}
\begin{prop}\label{fond-inf1} 
Let $h\in [2,+\infty)$ and  consider two functions $\psi \in H^\mez(\xT^d)$, $\eta \in W^{1,\infty}(\xT^d)$ with $\int_{\xT^d}\eta(x)\dx=0$ 
and  $\inf_{x\in \xT^d}\eta(x)> - \frac{h}{3}$. Let 
$\phi_h$ be the unique variational solution to the  problem,
\begin{align*}
 &\Delta_{x,y} \phi_h = 0\text{ in } \Omega= \{(x,y): x \in \xT^d, -h<y<\eta(x)\},\\
 &\phi_h\arrowvert_{y = \eta(x)} = \psi(x), \quad\partial_y\phi_h \arrowvert_{y= -h} =0.
\end{align*}
There  exists a constant $C>0$  
depending only on $d$ such that, for all  $h \geq 2$,
$$
 \int_{\xT^d} \vert \nabla_x \phi_h(x, -h)\vert^2\dx \leq 
C\big(1+\lA \nabla_x\eta\rA_{L^\infty}^3\big) 
e^{-\frac{1}{4} h} \Vert \psi\Vert^2_{H^\mez(\xT^d)}.
$$
\end{prop}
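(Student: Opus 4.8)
The plan is to use that, by the assumption $\inf_{\xT^d}\eta>-h/3$, the flat slab $S=\xT^d\times(-h,-h/3)$ is contained in $\Omega$, and to solve there explicitly by a partial Fourier transform in $x$, exactly as in the proof of~\e{decay:var2}. On $S$ the function $\phi_h$ is harmonic and satisfies $\partial_y\phi_h\arrowvert_{y=-h}=0$, so for each frequency $\xi$ the function $\widehat{\phi_h}(\xi,\cdot)$ solves $(\partial_y^2-\vert\xi\vert^2)\widehat{\phi_h}(\xi,\cdot)=0$ on $(-h,-h/3)$ with $\partial_y\widehat{\phi_h}(\xi,-h)=0$. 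This forces
\[
\widehat{\phi_h}(\xi,y)=A(\xi)\cosh\!\big((y+h)\vert\xi\vert\big),\qquad A(\xi)=\frac{\widehat{\phi_h}(\xi,-h/3)}{\cosh(\tfrac{2h}{3}\vert\xi\vert)} .
\]
Since $\widehat{\phi_h}(\xi,-h)=A(\xi)$, Plancherel's theorem gives $\int_{\xT^d}\vert\nabla_x\phi_h(x,-h)\vert^2\dx=c_d\sum_{\xi\neq 0}\vert\xi\vert^2\vert A(\xi)\vert^2$, the mode $\xi=0$ not contributing.

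The decay is then produced by the denominator $\cosh(\tfrac{2h}{3}\vert\xi\vert)$. For $\xi\in\xZ^d\setminus\{0\}$ one has $\vert\xi\vert\ge 1$, hence, using $\cosh t\ge\tfrac12 e^{t}$ and the fact that $t\mapsto t\,e^{-\frac{4h}{3}t}$ decreases on $[1,\infty)$ when $h\ge 2$,
\[
\frac{\vert\xi\vert^2}{\cosh^2(\tfrac{2h}{3}\vert\xi\vert)}\le 4\vert\xi\vert^2 e^{-\frac{4h}{3}\vert\xi\vert}\le 4\vert\xi\vert\, e^{-\frac{4h}{3}}\le e^{-h/4}\vert\xi\vert .
\]
Consequently $\int_{\xT^d}\vert\nabla_x\phi_h(x,-h)\vert^2\dx\le e^{-h/4}\,c_d\sum_{\xi\neq 0}\vert\xi\vert\,\vert\widehat{\phi_h}(\xi,-h/3)\vert^2=e^{-h/4}\Vert\phi_h(\cdot,-h/3)\Vert_{\dot H^\mez}^2$, so the whole estimate is reduced to controlling the interior (flat) trace of $\phi_h$ at the level $y=-h/3$ in $\dot H^\mez$.

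To bound this trace I would reuse the slab representation: a direct computation with $\widehat{\phi_h}(\xi,y)=A(\xi)\cosh((y+h)\vert\xi\vert)$ shows that the Dirichlet energy of $\phi_h$ on $S$ equals $c_d\sum_{\xi\neq 0}\vert\xi\vert\tanh(\tfrac{2h}{3}\vert\xi\vert)\vert\widehat{\phi_h}(\xi,-h/3)\vert^2$. Since $\tanh(\tfrac{2h}{3}\vert\xi\vert)\ge\tanh(\tfrac43)$ for $\vert\xi\vert\ge 1$ and $h\ge 2$, this gives
\[
\Vert\phi_h(\cdot,-h/3)\Vert_{\dot H^\mez}^2\le C\iint_{S}\vert\nabla_{x,y}\phi_h\vert^2\dydx\le C\iint_{\Omega}\vert\nabla_{x,y}\phi_h\vert^2\dydx=C\int_{\xT^d}\psi\, G(\eta)\psi\dx,
\]
the last identity being~\e{positivityDN}. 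It therefore remains to prove the energy bound $\int_{\xT^d}\psi\, G(\eta)\psi\dx\le C(1+\Vert\nabla_x\eta\Vert_{L^\infty}^3)\Vert\psi\Vert_{H^\mez}^2$.

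This energy bound is the main obstacle. I would flatten $\Omega$ with the diffeomorphism $\rho(x,z)=\tfrac1h(z+h)\eta(x)+z$ from the proof of Proposition~\ref{P:C3}; here $\partial_z\rho=1+\eta/h\ge\tfrac23$ by $\inf\eta>-h/3$, while the mean-zero hypothesis yields $\Vert\eta\Vert_{L^\infty}\le C\Vert\nabla_x\eta\Vert_{L^\infty}$ on $\xT^d$ (so that $\partial_z\rho\le C(1+\Vert\nabla_x\eta\Vert_{L^\infty})$). Because the harmonic extension minimises the Dirichlet energy, $\int_{\xT^d}\psi\,G(\eta)\psi\dx=Q(\widetilde{\phi})\le Q(\widetilde u)$, with $Q$ as in~\e{Q}, for the explicit competitor $\widetilde u(x,z)=\frac{\cosh((z+h)\vert D_x\vert)}{\cosh(h\vert D_x\vert)}\psi$, whose trace at $z=0$ is $\psi$. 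Expanding $Q(\widetilde u)$ by means of $\Lambda_1,\Lambda_2$ from~\e{partial-tilde}, the flat part equals $\int_{\xT^d}\psi\,\vert D_x\vert\tanh(h\vert D_x\vert)\psi\dx\lesssim\Vert\psi\Vert_{\dot H^\mez}^2$, whereas the terms carrying $\nabla_x\rho=\tfrac1h(z+h)\nabla_x\eta$ are absorbed by Cauchy--Schwarz at the cost of powers of $\Vert\nabla_x\eta\Vert_{L^\infty}$; the auxiliary estimates $\int_{\widetilde\Omega}(\partial_z\widetilde u)^2\lesssim\Vert\psi\Vert_{\dot H^\mez}^2$ and $\int_{\widetilde\Omega}\vert\nabla_x\widetilde u\vert^2\lesssim\Vert\psi\Vert_{\dot H^\mez}^2$ reduce to elementary $\cosh/\sinh$ integrals together with $\cosh(h\vert\xi\vert)\ge\tfrac12 e^{h\vert\xi\vert}$. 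Keeping track of the highest power then gives $Q(\widetilde u)\le C(1+\Vert\nabla_x\eta\Vert_{L^\infty}^3)\Vert\psi\Vert_{H^\mez}^2$, and combining this with the displays of the two preceding paragraphs proves the proposition, in fact with the stronger exponential factor $e^{-4h/3}$.
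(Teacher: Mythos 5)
Your proof is correct, and it shares with the paper's the decisive mechanism at the bottom --- restrict to a flat slab contained in $\Omega$, solve mode by mode in Fourier using the Neumann condition at $y=-h$, and extract the exponential decay from the $\cosh$ in the denominator --- but the way you control the interior trace is genuinely different. The paper measures the trace $f_h=\phi_h(\cdot,-h/2)$ only in $L^2$ (using the crude bound $\vert\widehat{f_h}(\xi)\vert\le C\lA f_h\rA_{L^2}$ inside the slab computation) and estimates $\lA f_h\rA_{L^2}$ by constructing an explicit lifting $\underline{\psi}$ of $\psi$ supported away from the bottom, decomposing $\phi_h=u_h+\underline{\psi}$, and combining the variational estimate for $u_h$ with a Poincar\'e inequality; this is the source of their factor $h^2+\lA \nabla_x\eta\rA_{L^\infty}^3$. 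You instead measure the trace at $y=-h/3$ in $\dot H^{\mez}$, dominate it by the Dirichlet energy of the slab through the same explicit mode-by-mode formula (the $\tanh(4/3)$ lower bound), hence by $\int\psi\,G(\eta)\psi\dx$ via \e{positivityDN}, and you then need an upper bound for the Dirichlet--Neumann quadratic form --- a converse to Proposition~\ref{P:C3} --- which your competitor argument in the flattened domain does provide (in fact with the power $2$ of $\lA\nabla_x\eta\rA_{L^\infty}$ rather than $3$, so your final estimate is slightly sharper, as is your decay rate $e^{-4h/3}$). The trade-off is clear: the paper avoids proving any continuity estimate for $G(\eta)$ at the cost of the lifting construction; you replace the lifting by minimisation against the explicit competitor $\widetilde u(x,z)=\cosh((z+h)\vert D_x\vert)\cosh(h\vert D_x\vert)^{-1}\psi$, which is more systematic and quantitatively better. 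If you write this up, make explicit (i) that the pushforward of $\widetilde u$ under the Lipschitz diffeomorphism is an admissible competitor (it lies in $H^1(\Omega)$ with the correct trace on $\{y=\eta\}$, and no condition is needed at the bottom since the Neumann condition is natural for the minimisation), and (ii) the regularity of $\phi_h$ up to $\{y=-h\}$ (even reflection across the flat bottom) that justifies the pointwise ODE for $\widehat{\phi_h}(\xi,\cdot)$ --- though the paper's own Step $2$ relies on the same fact.
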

\begin{proof}
 
{\em Step} $0$.--- We begin by defining a lifting $\underline {\psi} \in H^1(\Omega)$ 
of the boundary data $\psi$, such that
\begin{equation}\label{fd-inf1}
\begin{aligned}
&\underline {\psi}\arrowvert_{y=\eta(x)}
= \psi(x), \quad \underline {\psi}(x,y)= 0 
\text{ for }y\leq -\frac{h}{2} \quad \text{ and},\\
&\Vert \underline {\psi}\Vert_{H^1(\Omega)}
\leq C(1+ \Vert \nabla_x \eta\Vert_{L^\infty(\xT^d)})  \Vert \psi \Vert_{H^\mez(\xT^d)}.
\end{aligned}
\end{equation}
To do so, introduce a cut-off function 
$\chi \in C^\infty(\xR)$ such that
$$
\chi(z) = 0 \text{ if } z \leq -\frac{h}{6},\quad 
\chi(z) = 1 \text { if } z \geq -\frac{h}{8}, \quad 
0 \leq \chi \leq 1,
$$
and then set, for all $x\in \xT^d$ and all $z\leq 0$,
$$
\underline{\widetilde{\psi}}(x,z) = \chi(z)e^{z\vert D_x \vert} \psi.
$$
Since $ \int_{-\infty}^0 e^{z \vert \xi \vert}\dz= \frac{1}{\vert \xi \vert}$, 
one easily sees that there exists a constant $C>0$, 
depending only on the dimension $d$  such that,
$$
\Vert \underline{\widetilde{\psi}}\Vert_{H^1(\xT^d \times (-\infty,0))} \leq C \Vert \psi \Vert_{H^\mez(\xT^d)}.
$$
Now, given $(x,y)\in \Omega$, set
$$
\underline {\psi}(x,y) =  \underline{\widetilde{\psi}}(x, y-\eta(x)).
$$
It is easy to check that $\underline {\psi}$ satisfies all the requirements in \eqref{fd-inf1}.

{\em Step} $1$.--- Let  $f_h(x) = \phi_h(x,-h/2)$. 
We claim that there  exists  $C>0$
depending only on $d$ such that, for all  $h \geq 2$,
\begin{equation}\label{fd-inf0}
\lA f_h\rA_{L^2(\xT^d)}^2\le 
C(h^2 + \lA \nabla_x\eta\rA_{L^\infty(\xT^d)}^3)
\Vert \psi \Vert_{H^\mez(\xT^d)}^2.
\end{equation}
   
To prove this claim, we decompose $\phi_h$ as 
$\phi_h = u_h + \underline{\psi}$. Also, $u_h$ solves
\be\label{eq:uh}
\left\{
\begin{aligned}
&\Delta_{x,y} u_h = - \Delta_{x,y}\underline{\psi} \quad\text{ in } \Omega= \{-h<y<\eta(x)\},\\ &u_h\arrowvert_{y = \eta(x)} = 0, \quad\partial_yu_h \arrowvert_{y= -h} =0.
\end{aligned}
\right.
\ee
Notice that, 
since $\underline{\psi}(x,-h/2)=0$, we have $f_h = u_h \arrowvert_{y=-h/2}$. Also, 
since $u_h$ vanishes on $\{y=\eta(x)\}$ 
we have 
$$
\int_{\xT^d}f_h(x)^2\dx =-\int_{\xT^d}\int_{-h/2}^{\eta(x)}\partial_y(u_h(x,y)^2)\dy\dx.
$$
Thus, it follows from the Cauchy-Schwarz inequality that
\be\label{est:fhn1}
\lA f_h\rA_{L^2(\xT^d)}^2\le 2\lA u_h\rA_{L^2(\Omega)}\lA \partial_yu_h\rA_{L^2(\Omega)}.
\ee

Now, since $\Omega$ is contained in a strip, we can apply the Poincar\'e inequality, which 
ensures that there exists $C>0$ depending only on the dimension $d$ such that,
\begin{equation}\label{poinc1}
\lA u_h \rA_{L^2(\Omega)}\leq C(h + \lA \eta \rA_{L^\infty})
\lA\partial_y u_h\rA_{L^2(\Omega)}.
\end{equation}
On the other hand, by the variational theory applied to \e{eq:uh}, using the estimates 
\eqref{poinc1} and \eqref{fd-inf1}, we obtain 
\begin{equation}\label{fd-inf2}
\begin{aligned}
  \Vert \nabla_{x,y} u_h \Vert_{L^2(\Omega)} &\leq C(h+ \Vert \eta\Vert_{L^\infty(\xT^d)}) \Vert \underline{\psi}\Vert_{H^1(\Omega)}\\
  &\leq C' (h+ \Vert \eta\Vert_{L^\infty(\xT^d)})(1+\Vert \nabla_x\eta\Vert_{L^\infty(\xT^d)}) \Vert \psi \Vert_{H^\mez(\xT^d)},
  \end{aligned}
\end{equation}
where  $C'$ is independent of $h$.

Now, recall that, by hypothesis, we have $\int_{\xT^d} \eta(x)\dx = 0$. 
It follows that there exists $x_0 \in \xT^d$ such that $\eta(x_0) = 0$. 
So, writing,
$$
\eta(x) - \eta(x_0)
= \int_0^1 \nabla_x \eta(\lambda x+ (1-\lambda)x_0)(x-x_0)\, d\lambda,
$$
we immediately see that there exists $C>0$ depending only on $d$ such that,
\begin{equation}\label{poinc2}
\Vert \eta\Vert_{L^\infty(\xT^d)}\leq C\Vert \nabla_x\eta\Vert_{L^\infty(\xT^d)}.
\end{equation}

By combining the previous estimates, we obtain the claim~\e{fd-inf0}.

{\em Step} $2$ --- Denote by 
$\theta_h$ the unique solution to the problem
$$
\left\{
\begin{aligned}
&\Delta_{xy} \theta_h = 0 \text{ in }  S_h: =\{(x,y): x \in \xT^d, -h<y<-h/2\}, \\ &\theta\arrowvert_{y=-m} = f_h, \quad \partial_y \theta_h \arrowvert_{y = -h} = 0.
\end{aligned}
\right.
$$
Since $\phi_h$ and $\theta$ are two harmonic functions which coincide on $\{y=-h/2\}$, we have 
\begin{equation}\label{fd-inf4}
\phi_h = \theta_h, \quad \text{ in } S_h.
\end{equation}
Performing a Fourier transform  with respect to  $x$ we see easily that,  
$$ \widehat{\theta_h}(\xi,y) = \frac{ \cosh((y+h)\vert \xi \vert)}{\cosh((h/2)\vert \xi \vert) } \widehat{f_h}(\xi),$$ 
so that,
\begin{equation}\label{fd-inf5}
  \vert \xi \vert\widehat{\theta_h}(\xi,-h) = \frac{   \vert \xi \vert\widehat{f_h}(\xi)}{\cosh((h/2)\vert \xi \vert) }. 
  \end{equation}

Using Parseval's identity, \eqref{fd-inf4}, \eqref{fd-inf5} 
and the fact that $\cosh((h/2)\vert \xi \vert) \geq \mez e^{(h/2)\vert \xi \vert}$  we can write,
\begin{equation}\label{fd-inf6}
\begin{aligned}
 h^2\int_{\xT^d} \vert \nabla_x \phi_h(x, -h)\vert^2\dx &\leq  C_d \sum_{\xi \in \xZ^d} h^2 \vert \xi \vert^2 e^{-\frac{h}{2}\vert \xi \vert} \vert \widehat{f_h}(\xi)\vert^2\\
 & \leq C_d \sum_{\xi \in \xZ^d\setminus\{0\}} h^2 \vert \xi \vert^2 e^{-\mez h\vert \xi \vert} \vert \widehat{f_h}(\xi)\vert^2,
 \end{aligned}
 \end{equation}
since $h \geq 2$. Now, directly from the definition of the Fourier transform, we have
$$
\vert \widehat{f_h}(\xi)\vert \leq \Vert  \widehat{f_h}\Vert_{L^\infty(\xZ^d)} \leq \Vert f_h\Vert_{L^1(\xT^d)} \leq  C(d)\Vert f_h\Vert_{L^2(\xT^d)}.
$$
Since for $\xi \in \xZ^d\setminus\{0\}$ we have $e^{-\frac{1}{4}h\vert \xi \vert} \leq e^{-\frac{1}{4}h   }$ and $\sum_{\xi \in \xZ^d\setminus\{0\}} h^2 \vert \xi \vert^2 e^{-\frac{1}{4} h\vert \xi \vert}\le C$, for $h \geq 2$,
we deduce the desired result from \eqref{fd-inf0}.
\end{proof}

\section{Comparison with other virial type  theorems}\label{S:Virial}

\subsection{The first virial theorem of Clausius}
The first virial theorem was proved by Clausius in his famous paper of 1870 (\cite{Clausius}). He states his main result in the following form: {\em The mean vis viva of the system is equal to its virial}. Here  {\em vis viva} is an old Latin expression  which means 'living force', 
invented by Leibniz to designate kinetic energy 
while {\em virial} is a word invented by Clausius to designate a modified version of potential energy. To explain this statement  let us consider the following Hamiltonian system, 
\be\label{ODE}
\left\{
\begin{aligned}
&\fract q(t)=p(t) \quad (q(t)\in \xR),\\
&\fract p(t)=-V'(q(t))\quad\text{with}\quad V(q)=\frac{1}{2m}q^{2m} \quad(m\in \xN^*).
\end{aligned}
\right.
\ee
With $I(t)\defn\mez p(t)q(t)$ we verify that,
\be\label{virial0}
\fract I(t)=\mez p(t)^2- \mez q(t)V'(q(t)).
\ee
Since $t\mapsto I(t)$ is bounded globally in time, this implies that, for all time $T>0$, 
$$
\la\frac{1}{T}\int_0^T\left( \mez p(t)^2-q(t)V'(q(t))\right)\dt\ra=\la \frac{I(T)-I(0)}{T}\ra\le \frac{C}{T}.
$$
Hence, as $T$ 
goes to $+\infty$, the mean value of the kinetic energy $\mez p(t)^2$ is equal to the mean value of the virial, which is by definition the quantity $\mez q(t)V'(q(t))$. 
Notice that, when $m=1$, the equation~\e{ODE} is linear and the virial coincide with the potential energy $\mez q(t)^2$. In this case the virial theorem states that the mean 
kinetic energy is equal to the mean potential energy.

In this article, we prove exact identities of the form~\e{virial0} for solutions of the incompressible Euler equations with free surface. Notice that our main identity (see~\e{MI1}) goes in a reverse direction with respect to the Clausius theorem: we prove that the mean value of the potential energy is equal to a modified version of the kinetic energy.

%\subsection{The virial theorem for the nonlinear %Schr\"odinger equation}

\subsection{The virial theorem of Benjamin and Olver}

Virial theorems are related to the classical identities obtained by the multiplier method. The relation 
with the identities of Pohozaev and Derrick 
are evoked by Berestycki and Lions in their famous article~\cite{Berestycki-Lions}. In this paragraph, we will 
use this point of view to generalize an identity obtained by Benjamin and Olver in their thorough study of conservation laws fo water waves (\cite{BO}). To do this, guided by \cite{AIT}, we will combine Theorem~\ref{T:virial} with another identity obtained related to the conservation of momentum. 

In this paragraph, we consider functions 
defined on $\xR$ (that is the spatial 
dimension is $d=1$)
and partial differentiations are denoted by suffixes so that $f_t=\partial_t f$, $f_x=\partial_xf$ and $f_y=\partial_y f$.
Also, in this 
appendix we perform only formal computations and consider functions 
which are decaying at infinity instead 
of being periodic in $x$.

Let us recall that Noether's theorem and the invariance with respect to horizontal translations imply that the horizontal momentum is conserved. Namely, we have
\begin{equation*}
\fract \int_{\xR} \eta \psi_x \dx=0.
\end{equation*}
As explained in \cite{AIT}, there are several different local conservation laws associated with the conservation of momentum. In particular, there holds
\be\label{AIT1}
\partial_t(\eta\psi_x)+\partial_x S=0,
\ee
with
$$
S(t,x)\defn -\eta(t,x)\psi_t(t,x)-\frac{g}{2}\eta^2(t,x)+
\mez \int_{-h}^{\eta(t,x)}(\phi_x^2-\phi_y^2)(t,x,y)\dy.
$$

We will use \e{AIT1} 
to prove that
\be\label{claim:pohozaev}
\fract \int_{\xR} x\eta \psi_x \dx=\iint (\phi_x^2-\phi_y^2)\dydx+\frac{g}{2}\int \eta^2\dx-\frac{h}{2}\int_\xR\phi_x(t,x,-h)^2\dx.
\ee
To do so, multiply \e{AIT1} by $x$ and integrate by parts to obtain
$$
\fract \int_{\xR} x\eta \psi_x \dx=\int_{\xR}S\dx.
$$
Now, it follows from the equation for $\psi$ that
$$
-\eta\psi_t -\frac{g}{2}\eta^2=\frac{g}{2}\eta^2+\eta N.
$$
Therefore, using the identity~\e{mez-3mez2}, we get
$$
\int_{\xR}S\dx=\frac{g}{2}\int_\xR\eta^2\dx+\iint (\phi_x^2-\phi_y^2)\dydx -\frac{h}{2}\int_\xR\phi_x(t,x,-h)^2\dx.
$$

Now recall that (see~\e{MI1}),
\begin{equation}\label{MI1R}
\begin{aligned}
\fract
\int_{\xR} \eta\psi\dx &= \iint_{\Omega(t)} 
\Big(\frac{3}{2} \phi_y^2  + \frac{1}{2} 
\phi_x^2\Big)\dydx\\
&\quad- g\int_{\xR}\eta^2\dx+  \frac{h}{2}\int_{\xR} \vert \nabla_x \phi(t, x, -h)\vert^2\dx. 
\end{aligned}
\end{equation}
By combining \e{claim:pohozaev} and \e{MI1R}, we find that, for any $\lambda\in\xR$, 
\be\label{claim:pohozaev2}
\begin{aligned}
\fract \int_{\xR} (x\eta \psi_x+\lambda \eta\psi) \dx
&=\iint \Big(1+\frac{\lambda}{2}\Big)\phi_x^2+\Big(\frac{3\lambda}{2}-1\Big)\phi_y^2)\dydx\\
&\quad+\Big(\frac{g}{2}-\lambda g\Big)\int \eta^2\dx\\
&\quad+\Big(\frac{\lambda h}{2}-\frac{h}{2}\Big)\int_\xR\phi_x(t,x,-h)^2\dx.
\end{aligned}
\ee
When $\lambda=2$  we have,
$$
1+\frac{\lambda}{2}=\frac{3\lambda}{2}-1=2,
$$
and the previous identity reads,
\be\label{claim:pohozaev3}
\fract \int_{\xR} (x\eta \psi_x+2 \eta\psi) \dx
=4E_k -3E_p+\frac{h}{2}\int_\xR\phi_x(t,x,-h)^2\dx.
\ee
Moreover, integrating by part,  we see that
$$
\int_{\xR} (x\eta \psi_x+2 \eta\psi) \dx=\int_{\xR}(\eta-x\eta_x)\psi\dx.
$$
We thus conclude that
$$
\fract \int_{\xR}(\eta-x\eta_x)\psi\dx
=4E_k -3E_p+\frac{h}{2}\int_\xR\phi_x(t,x,-h)^2\dx.
$$
Therefore, we recover in the special case $\lambda=2$ an identity termed 'virial' by Benjamin and Olver~\cite{BO} (this identity appears on top of page $167$ in \cite{BO}). 
In their paper (see \S$6.3$ and the discussion on page 139 in \cite{BO}), they observed that~\e{claim:pohozaev3} cannot be used to relate the average kinetic and potential energies, except to deduce some information for special solutions (namely solitary waves). As we explained in the introduction and in Section~\S\ref{S:3}, the solution to this problem is that the average potential energy is equal to a modified version of the kinetic energy.

 %%%%%%%%%%%%%%%%%%%%%%%%%%%%%%%%%%%%%%%%%%%%%%%%%%%%%
\bibliographystyle{plain}

\begin{flushleft}
Thomas Alazard, Ecole Normale Sup\'erieure Paris-Saclay, CNRS
Centre Borelli UMR9010, 4 avenue des Sciences, F-91190 Gif-sur-Yvette.

\vspace{3mm}

Claude Zuily, Laboratoire de Math\'ematiques d'Orsay, Universit\'e
  Paris-Saclay, B\^atiment~307, 91405,  Orsay.

\end{flushleft}
 
\end{document}